\def\a{\alpha}
\def\b{\beta}
\def\d{\delta}
\def\g{\gamma}
\def\ds{\displaystyle}
\newcommand{\F}{\mathcal{F}}
\newcommand{\N}{\mathbb{N}}
\renewcommand{\to}{\longrightarrow}
\def\co{\colon\thinspace}
\newcommand{\Um}{\boldsymbol{\mu}}
\newcommand{\Ut}{\boldsymbol{\tau}}
\newcommand{\C}{\mathbb{C}}
\newcommand{\Z}{\mathbb{Z}}
\newcommand{\Q}{\mathbb{Q}}
\newcommand{\R}{\mathbb{R}}
\newcommand{\PSL}{\mathrm{PSL}}
\newcommand{\SLtwoC}{\mathrm{SL}(2,\C)}
\newcommand{\SLtwoR}{\mathrm{SL}(2,\R)}
\newcommand{\PSLtwoC}{\mathrm{PSL}(2,\C)}
\newcommand{\PSLtwoR}{\mathrm{PSL}(2,\R)}
\newcommand{\Tr}{\mbox{tr }}
\newcommand{\MCG}{\mathcal{MCG}}
\newcommand{\HH}{{\mathbb H}^2}
\newcommand{\X}{\mathcal{X}}
\newcommand{\Sc}{\mathcal{S}}
\newcommand{\tr}{\mathrm{tr}}
\newtheorem{Theorem}{Theorem}[section]
\newtheorem{Lemma}[Theorem]{Lemma}
\newtheorem{Proposition}[Theorem]{Proposition}
\newtheorem{Corollary}[Theorem]{Corollary}
\newtheorem{introthm}{Theorem}
\newtheorem{Definition}[Theorem]{Definition}
\newtheorem{Remark}[Theorem]{Remark}
\newtheorem{Example}[Theorem]{Example}
\newtheorem{Notation}[Theorem]{Notation}
\begin{document}

\title{On the character variety of the four--holed sphere}

\author{Sara Maloni}
\address{D\'{e}partement de Math\'{e}matiques de la Facult\'{e} des Sciences d'Orsay, Universit\'{e} Paris-Sud 11}
\email{sara.maloni@math.u-psud.fr}
\urladdr{www.math.u-psud.fr/$\sim$maloni}

\author{Fr\'{e}d\'{e}ric Palesi}
\address{Laboratoire d'Analyse, Topologie et Probabilit\'es (LATP), Aix-Marseille Universit\'{e}}
\email{frederic.palesi@univ-amu.fr}
\urladdr{www.latp.univ-mrs.fr/$\sim$fpalesi}

\author{Ser Peow Tan}
\address{Department of Mathematics, National University of Singapore}
\email{mattansp@nus.edu.sg}
\urladdr{http://www.math.nus.edu.sg/$\sim$mattansp}

\thanks{The first author was partially supported by the European Research Council under the {\em European Community}'s seventh Framework Programme (FP7/2007-2013)/ERC {\em grant agreement} n FP7-246918. The second author is partially supported by the ANR 2011 BS 01 020 01 ModGroup. The third author was supported by the National University of Singapore academic research grant R-146-000-156-112.}

\begin{abstract}
We study the (relative) $\SLtwoC$ character varieties of the four-holed sphere and the action of the mapping class group on it. We describe a domain of discontinuity for this action, and, in the case of real characters, show that this domain of discontinuity may be non-empty on the components where the relative euler class is non-maximal.
\end{abstract}

\maketitle

\section{Introduction}\label{s:intro}

In his PhD thesis \cite{mcs_are}, McShane established the following remarkable identity for lengths of simple closed geodesics on a once-punctured torus $S_{1,1}$ with a complete, finite area hyperbolic structure:
\begin{equation}\label{McShane}
  \sum_\g \frac{1}{1+\exp(l(\g))}=\frac{1}{2},
\end{equation}
where $\g$ varies over all simple closed geodesics on $S_{1,1}$, and $l(\g)$ is the hyperbolic length of $\g$ under the given hyperbolic structure on $S_{1,1}$. This result was later generalized to (general) hyperbolic surfaces with cusps by McShane himself \cite{mcs_sim}, to hyperbolic surfaces with cusps and/or geodesic boundary components by Mirzakhani \cite{mir_sim}, and to hyperbolic surfaces with cusps, geodesic boundary and/or conical singularities, as well as to classical Schottky groups by Tan, Wong and Zhang in \cite{tan_gen2}, \cite{tan_mcs}.

On the other hand, Bowditch in \cite{bow_apr} gave an alternative proof of (\ref{McShane}) via Markoff maps, and extended it in \cite{bow_mar} to type-preserving representations of the once-punctured torus group into $\SLtwoC$ satisfying certain conditions which we call here the BQ--conditions (Bowditch's Q--conditions). He also obtained in \cite{bow_ava} a variation of (\ref{McShane}) which applies to hyperbolic once-punctured torus bundles. Subsequently, Sakuma \cite{sak_var}, Akiyoshi, Miyachi and Sakuma \cite{aki_are}, \cite{aki_var} and recently  Sakuma and Lee \cite{lee_ava}  refined Bowditch's results and generalized them to those which apply to hyperbolic punctured surface bundles. In \cite{tan_gen} Tan, Wong and Zhang also further extended  Bowditch's results to representations of the once-punctured torus group into $\SLtwoC$ which are not type-preserving, that is, where the commutator is not parabolic, and also to representations which are fixed by an Anosov element of the mapping class group and which satisfy a relative version of the Bowditch's Q--conditions. They also showed that the BQ-conditions defined an open subset of the character variety on which the mapping class group of the punctured torus acted properly discontinuously.

In a different direction, Labourie and McShane in \cite{lab_cro} showed that the identity above has a natural formulation in terms of (generalised) cross ratios, and then, using this formulation, studied identities arising from the cross ratios constructed by Labourie for representations from fundamental groups of surfaces to $\PSL(n, \R)$.

\medskip

The above papers provided much of the motivation for this paper, in particular, the identities obtained were in many cases valid for the moduli spaces of hyperbolic structures, so invariant under the action of the mapping class group, and in the case of cone structures, they could be interpreted as identities valid for certain subsets of the character variety which were invariant under the action of the mapping class group, even though the representations in the subset may be non-discrete or non-faithful. This leads naturally to the question of whether there were interesting subsets of the character varieties on which the mapping class group acts properly discontinuously, but which consists of more than just discrete, faithful representations, as explored in the punctured torus case in \cite{tan_gen}.

\medskip

In this paper we will consider representations of the free group on three generators
$F_3=\langle \alpha,\beta,\gamma,\delta : \alpha \beta \gamma \delta=I \rangle$
into $\SLtwoC$. We adopt the viewpoint that $F_3$ is the fundamental group of the four-holed sphere $S$, with $\alpha, \beta, \gamma, \delta$ identified with $\partial S$, and study the natural action of $\MCG(S)$, the mapping class group of $S$ on the character variety
$$\X:=\mathrm{Hom}(F_3, \SLtwoC)//\SLtwoC,$$ where we take the quotient in the sense of geometric invariant theory. If $\theta \in \MCG(S)$ and $[\rho] \in \X$, this action is given by $$\theta([\rho])=[\rho \circ (\theta_*)^{-1}],$$ where $\theta_*\co \pi_1(S) \to \pi_1(S)$ is the map associated to $\theta$ in homotopy.
We are interested in the dynamics of this action, in particular, on the relative character varieties $\X_{(a,b,c,d)}$, which is the set of representations for which the traces of the boundary curves are fixed.

We describe the following result, see Theorems \ref{thm:openproperty} and \ref{thm:properlydiscontinuously}.

\begin{introthm} \label{thmA}
 There exists a domain of discontinuity for the action of $\MCG(S)$ on $\X_{(a,b,c,d)}$, that is, an open subset $D \subset \X_{(a,b,c,d)}$ on which $\MCG(S)$ acts properly discontinuously.
\end{introthm}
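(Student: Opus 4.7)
My plan is to imitate the strategy that Bowditch \cite{bow_mar} and Tan--Wong--Zhang \cite{tan_gen} used successfully for the once--punctured torus, adapting it to the richer combinatorial setting of the four--holed sphere. The key observation is that the set of isotopy classes of essential simple closed non--boundary curves on $S$ is parametrised by $\Q\cup\{\infty\}$ (because any such curve cuts $S$ into two pairs of pants, and pants decompositions are related by elementary moves forming a Farey--like complex), and $\MCG(S)$ acts on this set essentially through $\PGLtwoZ$. Define the candidate domain of discontinuity
\[
D:=\bigl\{[\rho]\in \X_{(a,b,c,d)}\ :\ \rho\ \text{satisfies the BQ--conditions}\bigr\},
\]
where the BQ--conditions say (i) $\tr\rho(\gamma)\notin[-2,2]$ for every simple closed non--boundary curve $\gamma$ on $S$, and (ii) only finitely many such $\gamma$ satisfy $|\tr\rho(\gamma)|\le K$ for some fixed constant $K\ge 2$. (The precise constant and the exact formulation of (i) may have to be modified to account for the four boundary traces $(a,b,c,d)$, which enter the trace identities for neighbouring curves in the Farey complex.)

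First I would establish the openness of $D$ (this is Theorem~\ref{thm:openproperty}). The heart of the matter is to show that condition (ii) is stable under small perturbations; continuity of the finitely many relevant trace functions gives the local stability of condition (i) at once. Following Bowditch, I would use the trace identities coming from the pair--of--pants decompositions of $S$ to organise the simple closed curves in a trivalent tree $T$ (dual to the Farey triangulation) and define a ``Bowditch function'' whose value on a vertex is the trace of the associated curve. A representation satisfying the BQ--conditions gives a function on the vertices of $T$ for which the ``attracting subtree'' of curves with small trace is finite; a neighbourhood argument (the trace function on $T$ varies continuously in $[\rho]$ and the relevant trace inequalities propagate outward along the edges of $T$ by the Markov--type recursions) shows that nearby representations also have a finite attracting subtree. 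This is the step I expect to require the most care, because the recursion on $T$ for the four--holed sphere involves four parameters $(a,b,c,d)$ and is genuinely richer than the Markoff equation.

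Second I would prove proper discontinuity (Theorem~\ref{thm:properlydiscontinuously}). Fix a compact $K\subset D$; I need to show that only finitely many $\theta\in\MCG(S)$ satisfy $\theta(K)\cap K\neq\emptyset$. Using compactness of $K$ together with the openness argument, uniformise the bound in (ii): there exist $K_0$ and $N_0$ such that for every $[\rho]\in K$ at most $N_0$ simple closed curves have trace bounded by $K_0$, and the set of such curves varies continuously. Because $\MCG(S)$ acts on the set of isotopy classes of simple closed curves by an action with finite point stabilisers (the stabiliser of an unoriented curve is virtually cyclic, generated up to finite index by the Dehn twist along it), an infinite sequence $\theta_n$ of distinct mapping classes applied to a fixed $[\rho]\in K$ must move some simple curve arbitrarily far in the Farey complex; by the Markoff--type propagation this forces $|\tr\rho(\theta_n\gamma)|\to\infty$, so $\theta_n[\rho]\notin K$ for large $n$, yielding the desired finiteness.

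Finally I would check that $D$ is non--empty and $\MCG(S)$--invariant; invariance is immediate since the BQ--conditions are phrased purely in terms of the set of simple closed curves, which $\MCG(S)$ permutes. Non--emptiness for real traces in the Fricke space (and, importantly, also on certain non--maximal Euler class components) is the content of the second part of the paper, and I would not attempt it here. The principal obstacle in the whole argument is the first step above: controlling the Bowditch function on the four--valent Farey complex with four boundary parameters, where the convexity/monotonicity that trivialises the once--punctured torus case has to be replaced by a more delicate analysis of the trace recursions.
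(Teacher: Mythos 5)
Your plan matches the paper's proof essentially step for step: the domain is the set of characters satisfying the BQ--conditions, openness is obtained by showing that a BQ--character has a finite attracting subtree in the trivalent tree dual to the Farey triangulation and that this finiteness is stable under perturbation, and proper discontinuity follows from making the lower Fibonacci-type bound on $\log^{+}$ of the traces uniform over a compact set so that an escaping sequence of mapping classes forces traces to infinity. The only technical point your hedge about ``modifying the constant'' must absorb is that, unlike the once-punctured torus case, the trace recursion around a region can degenerate (the neighbouring trace sequences may converge to the centre of a degenerate conic rather than grow, as in Example~\ref{rmk:raywithfiniteintersection}), so the constant $K$ in condition (ii) has to be enlarged to dominate these finitely many exceptional centres (the set $\mathcal{S}_{\Um}$ in Definition~\ref{def:K}).
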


This set is described by two conditions, much in the spirit of \cite{bow_mar} and \cite{tan_gen} given as follows. If $\mathcal{S}$ denotes the set of free homotopy classes of essential, non-peripheral simple closed curves on $S$, then the conditions for $[\rho]$ to be in $D$  are
\begin{enumerate}
  \item [(i)] $\Tr \rho(\gamma) \not\in [-2,2]$ for all $\gamma \in \mathcal{S}$; and
  \item [(ii)] $|\Tr \rho(\gamma)|<K$ for only finitely many $\gamma \in \mathcal{S}$, where  $K>0$ is a fixed constant that depends  only on $a,b,c,d$.
\end{enumerate}
Furthermore, the set of $\gamma$ satisfying condition (ii) above satisfy a quasi-convexity property, equivalently, is connected when represented as the subset of the complementary regions of a properly embedded binary tree (see Theorem \ref{thm:omega}). This property is particularly important when writing a computer program to draw slices of the domain of discontinuity.

As already observed by several other authors in related situations (see Goldman \cite{gol_the}, Tan--Wong--Zhang \cite{tan_gen} and Minsky \cite{min_the}), our domain of discontinuity contains the discrete faithful characters, but also characters which may not be discrete or faithful.

Of particular interest is the set of real characters, which consists of representations in $\SLtwoR$ or $\mathrm{SU}(2)$. In the latter case, Goldman \cite{gol_erg} proved ergodicity of the mapping class group action for all orientable hyperbolizable surfaces. (This was generalized by the second author in the non-orientable case in \cite{pal_erg}). On the other hand, in the $\SLtwoR$ case the dynamics is much richer and less understood. For example, when $S_g$ is a closed surface of genus $g \ge 2$, Goldman conjectured that the action of $\MCG(S_g)$ on the components of $\X(S_g)$ with non-maximal Euler class is ergodic. An approach towards a proof of this would be to use a cut-and-paste argument involving pieces homeomorphic to one-holed tori and four-holed spheres.  While the case of the one-holed torus was completely described by Goldman in \cite{gol_the}, we obtain partial results in the four-holed sphere case here. In fact, an important corollary of our analysis is the following (see Theorem \ref{thm:realnonemptydomain}):

\begin{introthm} \label{thmB}
The components of the relative $\SLtwoR$ character variety of $S$ with non-maximal Euler class may contain non-empty domains of discontinuity for the $\MCG(S)$ action.
\end{introthm}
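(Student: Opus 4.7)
The plan is to apply Theorem A and exhibit a single $\SLtwoR$-character in the open set $D\subset\X_{(a,b,c,d)}$ cut out by the BQ-conditions whose relative Euler number is strictly less than the maximum allowed by the Milnor--Wood inequality. Since $D$ is open and $\MCG(S)$-invariant, the existence of even one such point suffices, so the task reduces to a single explicit construction.

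First I would fix real boundary traces $(a,b,c,d)$ with $|a|,|b|,|c|,|d|$ all sufficiently large, so that the four peripheral elements are hyperbolic of long translation length. The real relative character variety $\X_{(a,b,c,d)}(\R)$ then decomposes into finitely many components indexed by the relative Euler number, with the two extremal components corresponding to (copies of) Teichm\"uller space and the remaining components being precisely those with non-maximal Euler class.

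Next I would construct $\rho$ by a skew pair-of-pants gluing. Cut $S$ along an essential separating simple closed curve $\g_0$ into two pairs of pants $P_1,P_2$. On each $P_i$ choose the Fuchsian representation, unique up to conjugation, realising the prescribed boundary traces. There are two natural ways to reassemble these data into a representation of $\pi_1(S)$ into $\SLtwoR$: the ``matching'' gluing, which lands in the Teichm\"uller (maximal Euler) component, and the ``skew'' gluing, in which one factor is conjugated by an orientation-reversing element centralising $\rho(\g_0)$ before being glued. The latter produces a representation $\rho$ whose relative Euler number differs from the maximal one by a nonzero integer and so lies in a non-maximal component. This is analogous to Goldman's use of sign changes in the one-holed torus setting \cite{gol_the}.

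Finally I would verify conditions (i) and (ii) of Theorem A for this $\rho$. Because the two gluings differ only by a conjugation centralising $\rho(\g_0)$, every simple closed curve on $S$ that is homotopic into one of the pants $P_i$ has the same trace under both constructions; the traces of the remaining simple closed curves are determined by the boundary traces and the twist parameter along $\g_0$ via the standard $\SLtwoC$ trace identities. Choosing $|a|,|b|,|c|,|d|$ large forces all these traces to lie well outside $[-2,2]$, and the finiteness in condition (ii) can then be extracted from the quasi-convexity property of Theorem \ref{thm:omega}, which reduces the check to a bounded region of the binary tree of simple closed curves. The main obstacle is confirming that the skew gluing genuinely shifts the Euler number; I would address this by a Milnor--Wood / rotation-number computation along the pants decomposition, using that reversing the gluing along $\g_0$ changes the rotation-number contribution of $\g_0$ by $\pm 1$, hence moves $[\rho]$ to a different connected component of $\X_{(a,b,c,d)}(\R)$.
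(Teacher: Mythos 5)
Your global strategy is the same as the paper's: produce a single real character satisfying the BQ-conditions that lies in a non-maximal component, and then let Theorem \ref{thmA} (openness and proper discontinuity of the action on the BQ-locus) do the rest. The skew gluing along $\g_0$ is a reasonable device for landing in the Euler class $0$ component, and identifying the components via the relative Euler class (Benedetto--Goldman, Milnor--Wood) is consistent with \S\ref{s:real_case}. However, there is a genuine gap at the step you treat as routine: the verification of (BQ1) and (BQ2) for the skew-glued representation. Every essential non-peripheral simple closed curve on $S$ other than $\g_0$ crosses $\g_0$, so its trace depends on the twist/gluing data and not only on the boundary traces; the assertion that ``choosing $|a|,|b|,|c|,|d|$ large forces all these traces to lie well outside $[-2,2]$'' is unsubstantiated and is false in general. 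A concrete warning sign inside the paper itself: when $|a|=|b|=|c|=|d|$ is large with $abcd\le 0$ one has $(p,q,r)=(0,0,0)$ and $s>20$, and by Goldman's analysis of the equivalent one-holed torus problem the non-maximal component then contains a large set (the complement of the domain of discontinuity, on which the action is ergodic) of characters with elliptic simple closed curves --- so large boundary traces alone cannot force (BQ1); everything hinges on where the interior coordinates $(x,y,z)$ sit. Likewise, Theorem \ref{thm:omega} gives \emph{connectedness} of $\Omega_\phi(k)$, not finiteness; (BQ2) requires the lower Fibonacci bound machinery (Theorem \ref{thm:B2}), which in turn needs an explicit attracting configuration to be exhibited. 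Finally, in a pair of pants there are no essential non-peripheral simple closed curves, so your observation that curves homotopic into a $P_i$ keep their traces is essentially vacuous and controls nothing.

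For comparison, the paper does not glue geometrically at all: it works directly in trace coordinates and constructs, for any $\Um$ with $(p,q,r)\neq(0,0,0)$, a real $\Um$--Markoff triple of the special form $\left(-(2+\epsilon(y)),\,y,\,y\right)$ with $y$ large and $\epsilon(y)=-\tfrac{q+r}{y}+o\!\left(\tfrac{1}{y}\right)>0$, and then proves by an explicit induction that the neighbours $y_n, z_n$ of the region carrying the value $-(2+\epsilon)$ increase monotonically. This forces $\Omega_\phi(2+\alpha)$ to consist of a single region, after which Fibonacci growth yields (BQ2), and $\phi^{-1}([-2,2]\cup\mathcal{S}_{\Um})=\emptyset$ can be arranged since $\mathcal{S}_{\Um}$ is finite. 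Non-maximality of the Euler class is then obtained from the component structure (e.g.\ for $abcd<0$ with all boundary traces outside $[-2,2]$ the entire real slice has Euler class $\pm 1$). Note that the operative hypothesis is $(p,q,r)\neq(0,0,0)$, not largeness of the boundary traces, and that the constructed character has one interior trace only barely hyperbolic --- quite the opposite of the ``everything large'' picture your argument relies on. To repair your proof you would need to replace the large-trace heuristic by an actual choice of twist parameter together with a monotonicity or attracting-tree argument of this kind.
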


This implies that there are representations in these components for which all essential simple closed curves on $S$ have hyperbolic representatives, even though these representations may not be discrete and faithful. There are also some surprises here, in particular, certain slices of the real character variety satisfying some general condition always have non-empty intersection with the domain of discontinuity.

The mapping class group $\MCG(S)$ consisting of equivalence classes of diffeomorphisms of $S$ fixing the boundary is isomorphic to the index two normal subgroup of the triangle group $\Z_2 \ast \Z_2 \ast \Z_2$ generated by reflections on an ideal triangle and it acts on the character variety as given in the earlier part of the introduction.
 For a representation $\rho : F_3 \rightarrow \SLtwoC$, we can look at its character
$$\begin{array}{rcl}
\chi_{\rho} : F_3 & \longrightarrow & \C \\ W & \longmapsto & \tr (\rho (W))
\end{array}$$
The character variety is exactly the set of characters,  by results of \cite{mag_rin}, see for example \cite{gol_the}, each character is in turn  determined entirely by its value on the seven elements as follows, satisfying a single equation:
$$\chi_{\rho} (\alpha):=a, \quad \chi_{\rho} (\beta):=b, \quad \chi_{\rho} (\gamma):=c, \quad \chi_{\rho} (\delta):=d,$$
 $$\chi_{\rho} (\alpha \beta):=x, \quad \chi_{\rho} (\beta \gamma):=y, \quad \chi_{\rho} (\gamma \alpha) :=z.$$
Hence, we identify $\X$ with the variety $\mathcal{V}$ consisting of points $(a,b,c,d,x,y,z) \in \C^7$ satisfying the equation
$$x^2+y^2+z^2+xyz=px+qy+rz+s,$$ where
$$p=ab+cd, \quad q=bc+ad, \quad r=ac+bd, \quad s=4-a^2-b^2-c^2-d^2-abcd.$$
The  $\MCG(S)$  action extends to an action of $\Z_2 \ast \Z_2 \ast \Z_2$ on $\mathcal{V}$, which is generated by $$\theta_1, \theta_2, \theta_3\co \mathcal{V} \rightarrow \mathcal{V},$$
where
\begin{eqnarray*}
  \theta_1(a,b,c,d,x,y,z)&=&(a,b,c,d, p-yz-x, y,z)\\
   \theta_2(a,b,c,d,x,y,z)&=&(a,b,c,d, x, q-xz-y,z)\\
   \theta_3(a,b,c,d,x,y,z)&=&(a,b,c,d, x, y,r-xy-z).\\
\end{eqnarray*}
Elements of the mapping class group correspond to words of even length in $\theta_1, \theta_2$ and $\theta_3$, in particular, $\theta_2\theta_3$, $\theta_3\theta_1$ and $\theta_1\theta_2$ correspond to Dehn twists about essential simple closed curves on $S$ and generate the $\MCG(S)$ action on $\mathcal{V}$.

\medskip

It is somewhat remarkable that many of the results of \cite{bow_mar} and \cite{tan_gen} generalize to the problem we study here, although the analysis is necessarily more technical and complicated, but in some sense, also more interesting. There are however some results which do not generalize, see  Example  \ref{rmk:raywithfiniteintersection}. Finally, we note that recent work of Hu, Tan and Zhang \cite{hut_cox} on Coxeter group actions on quartic varieties indicate that in fact, there should be a deeper underlying theory for analyzing the domains of discontinuity for group actions of this type.

\vskip 5pt

The rest of this paper is organized as follows. In \S \ref{s:not} we set the notation and give some basic definitions. In \S \ref{s:markoff} we prove the generalizations of the basic lemmas (in terms of Markoff maps) required to analyse and understand the orbit of a character under the action of the mapping class group. In particular,  generalizations of the ``fork'' lemma (Lemma \ref{lem:fork}) and the quasi-convexity result (Theorem \ref{thm:omega}) from \cite{bow_mar} and \cite{tan_gen}, as well as an analysis of the values taken by the neighbors around a region are covered. In \S \ref{s:domain} we give a proof of our main theorem (Theorem \ref{thmA}) which describes the domain of discontinuity for the action in terms of the BQ-conditions. In \S \ref{s:real_case}, we consider the real case and show that domains of discontinuity can occur in the components of the $\PSLtwoR$  relative character variety which do not have maximal relative Euler class (Theorem \ref{thmB}). Finally, in \S \ref{s:Conclusion} we give some concluding remarks.

\vskip 10pt

\noindent {\it Acknowledgements.} This project was initiated when the authors were participating in the trimester program on ``Geometry and Analysis of surface group representations'' at the Institut Henri Poincar\'e (Jan-Mar 2012). The authors are grateful to the organizers of the program for the invitations to participate in the program, and to the IHP and its staff for their hospitality and generous support. In particular, we would like to thank Bill Goldman for helpful conversations.

\section{Notation}\label{s:not}

In this section we set the notation which we will use in  the paper and  give some important definitions. Since many of the results included in this article are influenced by, and are generalisations of the results of Tan, Wong and Zhang's article \cite{tan_gen}, we will try to follow closely the notation and structure of that paper, so that the interested reader will find it easier to compare our results with theirs. We should also note that their paper was a generalisation of Bowditch's results \cite{bow_mar}.

\subsection{The four-holed sphere group}

Let $S = S_{0,4}$ be a (topological) four-holed sphere, that is, a sphere with four open disks removed, and let $\Gamma$ be its fundamental group. The group $\Gamma$ admits the following presentation
$$\Gamma = \langle \a, \b, \g, \d | \a\b\g\d \rangle , $$
where $\a$, $\b$, $\g$ and $\d$ correspond to homotopy classes of the four boundary components, one for each removed disk. Note that $\Gamma$ is isomorphic to the free group on three generators $\Z \ast \Z \ast \Z = \langle \a, \b, \g \rangle$.

We define an equivalence relation $\sim$ on $\Gamma$ by: $g \sim h$ if and only if $g$ is conjugate to $h$ or $h^{-1}$. Note that $\Gamma/\!\sim$ can be identified with the set of free homotopy classes of unoriented closed curves on $S$.

\subsection{Simple closed curves on the sphere}

Let $\Sc = \Sc(S)$ be the set of free homotopy classes of essential (that is, non-trivial and non-peripheral) simple closed curves on $S$ and let $\hat \Omega \subset \Gamma/\!\sim$ be the subset corresponding to $\Sc$. Note that $\hat \Omega$ can be identified with $\hat\Q = \Q \cup \{\infty\}$ by considering the `slope' of $[g] \in \hat \Omega$, see, among others, Proposition 2.1 of Keen and Series \cite{kee_the2}. For example, we can identify $\a\b$ with $0$, $\b\g$ with $\infty$, $\a\g$ with $-1$, and so on.

We also observe that $\hat \Omega$ inherits a cyclic ordering from the cyclic ordering of $\hat \Q$ induced from the standard embedding into $\hat \R = \R \cup \{\infty\} \cong S^1$.

\subsection{Relative character variety of $\Gamma$}

The character variety ${\mathcal X} = {\mathcal X}(\Gamma, \SLtwoC)$ is the space of equivalence classes of representations $\rho \co \Gamma \rightarrow \SLtwoC$, where the equivalence classes are obtained by taking the closure of the orbit under the conjugation action by $\SLtwoC$.

A representation $\rho\co \Gamma \to \SLtwoC$ is said to be a $\Ut$--{\it representation}, or $\Ut$--{\it character}, where $\Ut = (a,b,c,d) \in \C^4$, if for some fixed generators $\a, \b, \g, \d \in \Gamma$ corresponding to the boundary components of $S$, $\tr \rho(\a) = a$, $\tr \rho(\b) = b$, $\tr \rho(\g) = c$, $\tr \rho(\d) = d$. The space of equivalence classes of $\Ut$-representations is denoted by ${\mathcal X}_{\Ut}$ and is called the $\Ut$--\emph{relative character variety}. These representations correspond to representations of the four-holed sphere where we fix the conjugacy classes of the four boundary components.

For $\rho \in \mathcal{X}_{\Ut} $, we let $x= \tr( \rho (\a \b )) $, $y = \tr (\rho (\b \g )) $ and $z = \tr ( \rho (\a \g )) $. A classical result on the character varieties (see (9) in p. 298 of Fricke and Klein \cite{fri_vor}) states that $\mathcal{X}_{\Ut}$ is identified with the set
    \begin{align*}
        \left\{ (x,y,z) \in \C^3 \mid  x^2+y^2+z^2 +xyz =px+qy+rz+s \right\},
    \end{align*}
where $$p=ab+cd, \quad q=bc+ad, \quad r=ac+bd, \quad s=4-a^2-b^2-c^2-d^2-abcd.$$

The mapping class group of $S$, $\MCG:=\pi_0({\rm Homeo}(S))$ acts on ${\mathcal X}_{\Ut}$, see \cite{gol_the}.
For concreteness, we adopt the convention here that $\MCG$ consists of orientation-preserving homeomorphisms fixing the boundary, there will be no essential difference to the ensuing discussion. The mapping class group is generated by Dehn twists along the simple closed curves corresponding to $\a\b$, $\b\g$ and $\a \g$. The action of each Dehn twist can be read easily on the trace coordinates of ${\mathcal X}_{\Ut}$. For example the Dehn twist about the separating curve $\a\b$ is the map $\C^3 \rightarrow \C^3$ given by:
$$  \begin{pmatrix} x \\ y \\ z \end{pmatrix} \longmapsto
    \begin{pmatrix} x \\ q -  x (r- xy -z) - y \\ r - xy - z \end{pmatrix},$$
      where $p,q,r$ are defined as above. This corresponds to the action of $\theta_2\theta_3$ given in the Introduction, see also Remark \ref{Dehn}.

\subsection{The BQ-conditions}\label{def_BQ}

For a fixed $\Ut$, let $K = K(\Ut) > 0$ be a constant depending only on $\Ut$, that we will define later in Definition $\ref{def:K}$.  A $\Ut$-representation $\rho \co \Gamma \to \SLtwoC$ (or $\PSLtwoC$) is said to satisfy the {\it BQ-conditions} (Bowditch's Q-conditions) if
\begin{enumerate}[{(BQ1)}]
  \item ${\rm tr}\rho(g) \not\in [-2,2]$  for all $[g] \in \hat \Omega$; and
  \item $|{\rm tr}\rho(g)| \le K$ for only finitely many (possibly no) $[g]\in \hat \Omega$.
\end{enumerate}

We also call such a representation $\rho$ a {\it BQ-representation}, or \textit{Bowditch representation}, and the space of equivalence classes of such representations the \textit{Bowditch representation space}, denoted by $({\mathcal X}_{\Ut})_{Q}$.

Note that ${\rm tr}\rho(g_1)={\rm tr}\rho(g_2)$ if $[g_1]=[g_2]$ (since $g_1$ is conjugate to $g_2$ or its inverse by definition); so the conditions (BQ1) and (BQ2) make sense.

\subsection{The binary tree $\Sigma$}

Let $\Sigma$ be a countably infinite simplicial tree properly embedded in the plane all of whose vertices have degree $3$. As an example, we can consider, as $\Sigma$, the binary tree dual to the Farey triangulation $\F$ of the hyperbolic plane $\HH$(also called an infinite trivalent tree). See \cite{tan_gen} for the definition of $\F$.

\subsection{Complementary regions}

A {\it complementary region} of $\Sigma$ is the closure of a connected component of the complement.

We denote by $\Omega=\Omega(\Sigma)$ the set of complementary regions of $\Sigma$. Similarly, we use $V(\Sigma)$, $E(\Sigma)$ for the set of vertices and edges of $\Sigma$ respectively.

\begin{figure}
[hbt] \centering
\includegraphics[height=2 cm]{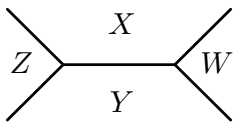}
\caption{The edge $e\leftrightarrow (X,Y;Z,W)$.}
\label{fig:edge oriented}
\end{figure}

We use the letters $X,Y,Z,W, \ldots$ to denote the elements of $\Omega$. For $e \in E(\Sigma)$, we also use the notation $e\leftrightarrow (X,Y;Z,W)$ to indicate that $e=X \cap Y$ and $e\cap Z$ and $e \cap W$ are the endpoints of $e$; see Figure \ref{fig:edge oriented}.

\subsection{A tri-coloring of the tree}

We choose a coloring of the regions and edges, namely a map $\mathcal{C} \co \Omega(\Sigma) \cup E(\Sigma) \to \{ 1, 2, 3 \}$ such that for any edge $e\leftrightarrow (X,Y;Z,W)$ we have $\mathcal{C}(e)=\mathcal{C}(Z) = \mathcal{C}(W)$ and such that $\mathcal{C} (e)$, $\mathcal{C}(X)$ , $\mathcal{C}(Y)$ are all different. The coloring is completely determined by a coloring of the three regions around any specific vertex, and hence is unique up to a permutation of the set $\{1, 2, 3\}$. We denote by $\Omega_i (\Sigma)$ the set of complementary regions with color $i$, and by $E_i (\Sigma)$ the set of edges with color $i$.

As a convention, in the following, when $X,Y,Z$ are complementary regions around a vertex, we will have $X \in \Omega_1 (\Sigma)$, $Y \in \Omega_2 (\Sigma)$ and $Z \in \Omega_3 (\Sigma)$.

\subsection{$\Um$--Markoff triples}

For a complex quadruple $\Um = (p,q,r,s) \in \C^4$, a $\Um$-{\it Markoff triple} is an ordered triple $(x,y,z)$ of complex numbers satisfying the $\Um$--Markoff equation:
\begin{eqnarray}\label{eqn:vertex}
x^2+y^2+z^2+xyz=px+qy+rz+s.
\end{eqnarray}
Note that, if $(x,y,z)$ is a $\mu$--Markoff triple in the sense of Tan-Zhang-Wong, then $(-x,-y,-z)$ is a $(0,0,0,\mu)$--Markoff triple in our sense.

It is easily verified that, if $(x,y,z)$ is a $\Um$--Markoff triple, so are the triples
\begin{equation}\label{eqn:elemoper}
(x,y,r-xy-z), \hspace{0.3cm} (x,q-xz-y,z) \mbox{   and    } (p-yz-x,y,z).
\end{equation}
It is important to note that permutations triples are not $\Um$--Markoff triples, contrary to the situation with $\mu$--Markoff triples.

\subsection{Relation with $\Ut$-representations}

Let $\mathrm{GT}\co \C^4 \longrightarrow \C^4$ be the map defined by:

\begin{align*}
    \left[
        \begin{array}{c}
            a \\ b \\ c \\ d
        \end{array}
    \right]
                        & \longmapsto
    \left[
        \begin{array}{c}
            ab+cd \\ ad+bc \\ ac+bd \\ 4-a^2-b^2-c^2-d^2 -abcd
        \end{array}
    \right] =   \left[
        \begin{array}{c}
            p \\ q \\ r \\ s
        \end{array}
    \right].
\end{align*}

This map is defined and studied by Goldman and Toledo in \cite{gol_aff}, where they show, among many other results, that the map is onto and proper. (Note that Goldman and Toledo denote this map $\Phi$.)

\begin{Remark}\label{corresp}
  Given $\Ut \in \C^4$, a representation $\rho$ is in $\mathcal{X}_{\Ut}$ if and only if $(\tr (\rho (\a \b )), \tr (\rho (\b \g )), \tr (\rho (\a \g )))$ is a $\Um$--Markoff triple with $\Um = \mathrm{GT} (\Ut)$.
\end{Remark}

The elementary operations defined in (\ref{eqn:elemoper}) are intimately related with the action of the mapping class group on the character variety, as we will see later.

\subsection{$\Um$--Markoff map}

A $\Um$-{\it Markoff map} is a function $\phi \co \Omega \to \C$ such that
\begin{itemize}
\item[(i)] for every vertex $v \in V(\Sigma)$, the triple $(\phi(X), \phi(Y), \phi(Z))$ is a $\Um$--Markoff triple, where $X,Y,Z \in \Omega$ are the three regions meeting $v$ such that $X \in \Omega_1$, $Y \in \Omega_2$ and $Z \in \Omega_3$;
\item[(ii)] For any $i \in \{1,2,3\}$ and for every edge $e \in E_i(\Sigma)$ we have:
\begin{itemize}
\item If $i=1$ and $e \leftrightarrow (Y,Z;X,X')$, then
\begin{eqnarray}\label{eqn:edge1}
x+x'=p-yz,
\end{eqnarray}
\item If $i=2$ and $e \leftrightarrow (X,Z;Y,Y')$, then
\begin{eqnarray}\label{eqn:edge2}
y+y'=q-xz,
\end{eqnarray}
\item If $i=3$ and $e \leftrightarrow (X,Y;Z,Z')$, then
\begin{eqnarray}\label{eqn:edge3}
z+z'=r-xy,
\end{eqnarray}
\end{itemize}
where $x=\phi(X), y=\phi(Y)$, $z=\phi(Z)$, $x'=\phi(X'), y'=\phi(Y')$ and $z'=\phi(Z')$\end{itemize}

We shall use ${\bf \Phi}_{\Um}$ to denote the set of all $\Um$--Markoff maps and lower case letters to denote the $\phi$ values of the regions. For example, we have $\phi(X)=x, ~\phi(Y)=y, ~\phi(Z)=z$.

\begin{Remark}\label{phi_ut}
  There exists a bijective correspondence between $\Um$--Markoff maps and $\Um$--Markoff triples. Hence, using Remark \ref{corresp}, there exists a bijective correspondence between the set ${\bf \Phi}_{\Um}$ of $\Um$--Markoff maps and the $\Ut$--relative character variety $\mathcal{X}_{\Ut}$, where $\Um = \mathrm{GT} (\Ut)$.
\end{Remark}

In fact, as in the case of Markoff maps and $\mu$--Markoff maps, if the edge relations \eqref{eqn:edge1}, \eqref{eqn:edge2} and \eqref{eqn:edge3} are satisfied along all edges, then it suffices that the vertex relation \eqref{eqn:vertex} is satisfied at a single vertex. So one may establish a bijective correspondence between $\Um$--Markoff maps and $\Um$--Markoff triples, by fixing three regions $X, Y, Z$  which meet at some vertex $v_0$. This process may be inverted by constructing a tree of $\Um$--Markoff triples as Bowditch did in \cite{bow_mar} for Markoff triples and as Tan, Wong and Zhang did in \cite{tan_gen} for the $\mu$--Markoff triples: given a triple $(x,y,z)$, set $\phi(X)=x, \phi(Y)=y, \phi(Z)=z$, and extend over $\Omega$ as dictated by the edge relations. In this way one obtains an identification of ${\bf \Phi}_{\Um}$ with the algebraic variety in $\C^3$ given by the $\Um$--Markoff equation. In particular, ${\bf \Phi}_{\Um}$ gets an induced topology as a subset of $\C^3$.

\subsection{The subsets $\Omega_{\phi}(k) \subset \Omega$}

Given $\phi \in {\bf \Phi}_{\Um}$ and $k \ge 0$, the set ${\Omega}_{\phi} (k) \subseteq \Omega$ is defined by
\begin{eqnarray*}
{\Omega}_{\phi}(k)=\{ X \in \Omega \mid |\phi(X)| \le k \}.
\end{eqnarray*}
These sets will be crucial in the proof of our main results.

We can now state the BQ-conditions in terms of Markoff maps.
\begin{Definition}
For a fixed $\Um$, let $L = L(\Um) > 0$ be a constant depending only on $\Um$, that we will define later in Definition $\ref{def:K}$.  A $\Um$-Markoff map $\phi \in {\bf \Phi}_{\Um}$ is said to satisfy the {\it BQ-conditions}  if
\begin{enumerate}[{(BQ1)}]
  \item $\phi^{-1}  ([-2,2]) = \emptyset$; and
  \item $\Omega_{\phi} (L)$ is finite.
\end{enumerate}
\end{Definition}
We denote by $({\bf \Phi}_{\Um} )_Q$ the set of all  $\Um$-Markoff maps which satisfy the BQ-conditions, and call the set of all such maps the Bowditch $\Um$-Markoff maps.

We will see that it is sufficient to give a large enough constant $L$ to define the BQ-conditions. Indeed, taking a bigger constant $L' > L(\Um)$ in the definition will give rise to the same subset of Markoff maps.

\section{Estimates on Markoff maps}\label{s:markoff}

The main aim of this section is to prove the following result, which is a generalisation of Theorem 3.1 of Tan--Wong--Zhang \cite{tan_gen}.

\begin{Theorem}\label{thm:omega} (Quasi-convexity)
Let $\Um \in \C^4$. Then:
\begin{enumerate}
\item There exists a constant $l = l ( \Um ) > 0$ such that $\forall \phi \in {\bf \Phi}_{\Um}$, we have that $\Omega_\phi (l)$ is non-empty.
\item There exists a constant $\alpha = \alpha ( \Um ) \geq 0$ such that $\forall \phi \in {\bf \Phi}_{\Um}$ and $\forall k \geq 2+\alpha$, the set $\Omega_\phi (k)$ is connected.
\end{enumerate}
\end{Theorem}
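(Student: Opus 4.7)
The proof should follow the strategy introduced by Bowditch in \cite{bow_mar} and refined by Tan, Wong and Zhang in \cite{tan_gen}, adapted to the asymmetric setting where the Markoff equation has non-zero coefficients $p,q,r,s$. The plan is to set up a descent procedure on $\Sigma$ using the edge relations \eqref{eqn:edge1}--\eqref{eqn:edge3} and the fork lemma (Lemma \ref{lem:fork}), and to use it to prove both the non-emptiness statement and the quasi-convexity statement.

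For part (1), observe that for an edge $e \leftrightarrow (Y,Z;X,X')$ of color $1$ we have $x+x' = p-yz$, so $\min(|x|,|x'|) \le \tfrac{1}{2}(|p|+|y||z|)$, and similarly for colors $2$ and $3$. This furnishes at each edge a canonical ``direction toward the smaller value'' and allows us to flow along $\Sigma$ in the direction of decreasing $|\phi|$. Starting from any vertex $v_0$, I would iterate this descent: if all three values at the current vertex exceed a threshold $l=l(\Um)$ depending on $\max(|p|,|q|,|r|,|s|)$, the edge relation forces a strict decrease of $|\phi|$ across the appropriate edge. Since the resulting sequence of absolute values is strictly decreasing until a region in $\Omega_\phi(l)$ is hit, the descent must terminate; thus $\Omega_\phi(l)$ is non-empty.

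For part (2), I would argue by contradiction. Suppose $\Omega_\phi(k)$ is disconnected for some $k \ge 2 + \alpha$, and pick $X\in A$, $Y\in B$ in distinct connected components so that there is a chain of pairwise adjacent regions $X = X_0, X_1, \ldots, X_n = Y$ of minimal length among those whose intermediate regions lie in $\Omega \setminus \Omega_\phi(k)$. Each intermediate region thus has $|\phi| > k \ge 2+\alpha$. The fork lemma (Lemma \ref{lem:fork}) then provides, for each such large region, a canonical direction of decrease of $|\phi|$ along its boundary, and rules out any ``bi-directional fork'' on a region with $|\phi| > 2+\alpha$. These directions assemble into a combinatorial flow on $\Omega \setminus \Omega_\phi(k)$; since the chain is finite, the flow must exit the complement into $\Omega_\phi(k)$ somewhere, but the coherence provided by the fork lemma means it cannot exit on both sides of the chain, contradicting the existence of $X \in A$ and $Y \in B$ in distinct components.

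The main obstacle will be proving the fork lemma (Lemma \ref{lem:fork}) with explicit uniform constants valid for all $\phi \in {\bf \Phi}_\Um$. In the classical Markoff and $\mu$-Markoff settings, the symmetry of the defining equation simplifies the book-keeping, whereas here the three edge relations $x+x'=p-yz$, $y+y'=q-xz$, $z+z'=r-xy$ involve three distinct parameters and the vertex equation has asymmetric linear terms $px+qy+rz$; consequently every estimate must be tracked by the color of the region. Determining the correct threshold $\alpha(\Um)$ so that the fork arrows are well-defined and coherent on the entire ``large'' complement $\Omega \setminus \Omega_\phi(2+\alpha)$, not only along one particular chain, will require careful bookkeeping in terms of $\max(|p|,|q|,|r|,|s|)$ to handle the three color classes uniformly.
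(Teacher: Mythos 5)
Your part (2) is essentially the argument the paper uses: a minimal path in $\Sigma$ joining two putative components, outward-pointing arrows at its two extremities, and the Fork Lemma to produce a region of small norm adjacent to the path, contradicting minimality. Part (1), however, has a genuine gap.

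The gap is the sentence ``Since the resulting sequence of absolute values is strictly decreasing until a region in $\Omega_\phi(l)$ is hit, the descent must terminate.'' A strictly decreasing sequence of positive reals need not drop below any prescribed threshold: following the outgoing arrows may produce an \emph{infinite} descending ray along which the neighboring values decrease monotonically yet converge to a limit far above $l$. (The descent may also stop at a sink, where all three arrows point inward and no further decrease is available; that case requires the separate estimate of Lemma~\ref{lem:sink} and is why the paper takes $l=\max\{m(\Um),\,2+\alpha\}$ with a second constant $m(\Um)$.) The real content of part (1) is therefore the infinite-ray case, Lemma~\ref{lem:infiniteray}: one must show by a quantitative argument with the edge relations --- two consecutive edges sharing a face $X$ with nearly equal opposite values force $|x|^2\le(2+\alpha)^2+\eta(2+\alpha)$, and a second estimate then rules out $|\phi|$ staying strictly above $2+\alpha$ along the whole ray when $\alpha\neq 0$, the case $\alpha=0$ being delegated to Tan--Wong--Zhang --- that every infinite descending ray meets at least one region with $|\phi(X)|\le 2+\alpha$. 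This is not a formality in the present setting: Example~\ref{rmk:raywithfiniteintersection} exhibits an infinite descending ray whose neighboring values converge to roughly $32$, so the stronger conclusion available in the Bowditch and Tan--Wong--Zhang settings (infinitely many small regions along the ray) actually fails here, and only a more delicate statement survives. Your sketch skips this entirely. A smaller but symptomatic error: the inequality $\min(|x|,|x'|)\le\tfrac12(|p|+|y||z|)$ you draw from $x+x'=p-yz$ is false (take $x=-x'$ large with $p-yz$ small); the edge relation bounds $|x|+|x'|$ from \emph{below} by $|yz|-|p|$, and upper bounds on individual values only come from the vertex relation via the Fork and Sink Lemmas.
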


For doing that, we need to do lots of estimates. In order to shorten a bit the formulae which will appear, we introduce the following notation.

\begin{Notation}\label{alpha}
  Given a $\Um$--Markoff map $\phi$, let $$\alpha  = \alpha (\Um) = \frac{\max \{|p| , |q| , |r|\}}{2}.$$
\end{Notation}

For the rest of this section, let us fix $\Um \in \C^4$ and a $\Um$--Markoff map $\phi$.

\subsection{Arrows assigned by a $\Um$--Markoff map}

As Bowditch \cite{bow_mar} and Tan, Wong and Zhang \cite{tan_gen} did, we may use $\phi \in {\bf \Phi}_{\Um}$ to assign to each undirected edge, $e$, a particular directed edge, $\mathrm{vect}_\phi (e)$, with underlying edge $e$. Suppose $e \leftrightarrow (X,Y;Z,W)$. If $|z|>|w|$, then the arrow on $e$ points towards $W$; in other words, $\mathrm{vect}_\phi (e) = (X,Y;Z \rightarrow W)$. If $|z|<|w|$, we put an arrow on $e$ pointing towards $Z$, that is, $\mathrm{vect}_\phi (e) = (X,Y;W \rightarrow Z)$. If it happens that $|z|=|w|$ then we choose $\mathrm{vect}_\phi(e)$ arbitrarily. Let $\vec E(\Sigma)$ be the set of oriented edges.

A vertex with all three arrows pointing towards it is called a {\em sink}, one where two arrows point towards it and one away is called a {\em merge}, and vertex with two (respectively three) arrows pointing away from it is called a {\em fork} (respectively \emph{source}).

\begin{Lemma}\label{lem:fork} (Fork lemma)
Suppose $(X,Y,Z) \in \Omega_1 \times \Omega_2 \times \Omega_3$ meet at a vertex $v \in V (\Sigma)$, and the arrows on the edges $X \cap Y$ and $X\cap Z$ both point away from $v$. Then at least one of the following is true:
    \begin{itemize}
        \item $|x| \leq 2+ \frac{|q|+ |r|}{4}$;
        \item $|y|<2$;
        \item $|z| <2$.
    \end{itemize}
\end{Lemma}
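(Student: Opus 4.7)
The plan is to argue by contradiction. Assume that all three conclusions fail simultaneously, so that
$|x| > 2 + (|q|+|r|)/4$, $|y| \geq 2$, and $|z| \geq 2$,
and then derive a contradiction using the two arrow hypotheses together with the edge relations.

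First I would translate the arrow hypotheses into numerical inequalities. The edge $X\cap Y$ has color $3$, so at $v$ it satisfies the color-$3$ edge relation $z+z'=r-xy$, where $Z'\in\Omega_3$ is the third region at the other endpoint of $X\cap Y$. The arrow pointing away from $v$ means, by the definition of $\mathrm{vect}_\phi$, that $|z|>|z'|$, which gives
$$|z| > |r-xy-z|.$$
By the reverse triangle inequality, $|r-xy-z|\geq |xy|-|z|-|r|$, and hence $|x||y|<2|z|+|r|$. A perfectly symmetric argument on the color-$2$ edge $X\cap Z$ (using the relation $y+y'=q-xz$ and $|y|>|y'|$) yields $|x||z|<2|y|+|q|$.

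Next, I would multiply these two inequalities and divide by $|y||z|$, which is legitimate because $|y|,|z|\geq 2>0$ under the standing assumption. This gives
$$|x|^2 < 4 + \frac{2|q|}{|y|} + \frac{2|r|}{|z|} + \frac{|q||r|}{|y||z|},$$
and then the lower bounds $|y|,|z|\geq 2$ collapse the right-hand side to
$$|x|^2 < \Bigl(2+\tfrac{|q|}{2}\Bigr)\Bigl(2+\tfrac{|r|}{2}\Bigr).$$
One application of AM--GM to the right-hand side produces $|x| < 2 + (|q|+|r|)/4$, directly contradicting the third assumption and completing the proof.

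The argument is essentially bookkeeping: the vertex relation \eqref{eqn:vertex} is not even needed, only the two edge relations around $v$. The main place to be careful is at the very start, where one has to identify which color belongs to each of the edges $X\cap Y$ and $X\cap Z$ so as to invoke the correct edge relation from \eqref{eqn:edge1}--\eqref{eqn:edge3}, and then to translate the arrow convention ``away from $v$'' into the inequality for the $\phi$-value at $v$ being larger (rather than smaller) than the $\phi$-value at the opposite endpoint. Once these two conventions are lined up correctly, the estimate is straightforward.
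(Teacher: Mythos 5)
Your proof is correct and follows essentially the same route as the paper's: both arguments extract the inequalities $|xy| \le 2|z|+|r|$ and $|xz| \le 2|y|+|q|$ from the two edge relations and the arrow hypotheses, the only difference being that you multiply them and invoke AM--GM while the paper adds them and divides by $|y|+|z| \ge 4$; both yield $|x| \le 2 + \frac{|q|+|r|}{4}$. (Two cosmetic points: since the arrow is chosen arbitrarily when the two opposite regions have equal modulus, you only get $|z| \ge |z'|$ and the resulting inequalities should be non-strict; and your final contradiction is with the first of your standing assumptions, not the third.)
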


We have similar results if the edges $Y \cap Z$ and $Y \cap X$ (or $Z\cap X$ and $Z \cap Y$) both point away from $v$.

\begin{proof}
Let $e_2$ and $e_3$ be the arrows pointing away from $v$. Let $Y'$ and $Z'$ be the regions such that $e_2 = (X,Z; Y \rightarrow Y')$ and $e_3 = (X , Y ; Z \rightarrow Z')$. The edge relation gives $|xz| = |y+y'-q| \leq |y|+|y'|+|q| \leq 2 |y|+|q|$. Similarly $|xy| \leq 2 |z|+  |r|$. Adding both inequalities, one obtains $$ |x| (|y|+|z|) \leq 2 (|y| + |z|) + (|q|+ |r|).$$ If $|y|\geq 2$ and $|z|\geq 2$, then we have $$|x| \leq 2 +  \frac{|q|+|r|}{|y|+|z|} \leq 2 + \frac{|q| + |r|}{4}.$$
\end{proof}

A weaker statement will be sufficient for most of the paper.
\begin{Corollary}
Suppose $(X,Y,Z) \in \Omega_1 \times \Omega_2 \times \Omega_3$ meet at a vertex $v \in V (\Sigma)$, and two arrows  point away from $v$, that is, $v$ is a fork or a source. Then
$$\min \{|x| , |y| , |z|\} \leq 2 + \alpha.$$
\end{Corollary}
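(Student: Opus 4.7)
The plan is to reduce the Corollary to a direct application of the Fork Lemma (and its symmetric analogues, asserted right after the Fork Lemma's proof). Since $v$ is a fork or a source, by definition at least two of the three edges incident to $v$ point away from $v$. Up to relabeling the colors, we are in one of three symmetric situations, each governed by a version of the Fork Lemma. My plan is to fix one of these three cases, say the one where the arrows on $X\cap Y$ and $X\cap Z$ point away from $v$, apply the Fork Lemma directly, and then observe that the conclusion of the Fork Lemma implies the bound on $\min\{|x|,|y|,|z|\}$ claimed in the Corollary.

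In detail, the Fork Lemma yields one of the three alternatives $|x|\leq 2+\tfrac{|q|+|r|}{4}$, $|y|<2$, or $|z|<2$. In the second and third alternatives we have $|y|<2\leq 2+\alpha$ or $|z|<2\leq 2+\alpha$, so the minimum is trivially bounded by $2+\alpha$. For the first alternative, I would use the elementary estimate
\[
\frac{|q|+|r|}{4}\leq \frac{2\max\{|p|,|q|,|r|\}}{4}=\frac{\max\{|p|,|q|,|r|\}}{2}=\alpha,
\]
which gives $|x|\leq 2+\alpha$, and therefore $\min\{|x|,|y|,|z|\}\leq 2+\alpha$. The two remaining cases (where the two outgoing arrows are on $Y\cap Z,\,Y\cap X$, or on $Z\cap X,\,Z\cap Y$) are handled identically using the symmetric versions of the Fork Lemma; the only change is which of $p,q,r$ appears in the numerator of the quotient, and the inequality $\max$ argument absorbs this uniformly.

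There is no real obstacle here; the Corollary is a packaging of the Fork Lemma into a symmetric, color-independent form. The only care needed is to keep track of which two colors of arrows point outward in each case, so that one invokes the right analogue of the Fork Lemma and identifies which two of $p,q,r$ appear in the bound — a routine bookkeeping step that is absorbed by the definition of $\alpha$.
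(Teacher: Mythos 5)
Your proposal is correct and is exactly the intended argument: the paper states this Corollary without proof as an immediate weakening of the Fork Lemma, and your reduction — invoking the appropriate symmetric version of the Fork Lemma for whichever pair of outgoing edges occurs, then absorbing $\frac{|q|+|r|}{4}$ (or its analogues) into $\alpha=\frac{\max\{|p|,|q|,|r|\}}{2}$ and noting $2\leq 2+\alpha$ in the remaining alternatives — is precisely the routine verification the authors leave to the reader.
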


\begin{Lemma}\label{lem:sink}
  There is a constant $m(\Um) \in \R_{>0}$ such that if three regions $X, Y, Z$ meet at a sink, then
$$ \min \{ |x| , |y| , |z| \} \leq m (\Um).$$
\end{Lemma}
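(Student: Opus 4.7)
The strategy is to combine two facts: the sink condition, which gives a Vieta-type bound on one of the three values in terms of the other two, and the vertex Markoff equation itself, which gives a multiplicative bound on $|xyz|$ in terms of the sum of squares. Assume without loss of generality that $|x| \le |y| \le |z|$, so that $|x| = \min\{|x|,|y|,|z|\}$; the goal is to bound $|x|$ by a constant depending only on $\Um$.

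Because $v$ is a sink, the arrow on the edge between $X$ and $Y$ points towards $v$, so $|z| \le |z'|$, where $z' = \phi(Z')$ is the value assigned to the complementary region across that edge. Viewing the vertex relation \eqref{eqn:vertex} as a quadratic in the $Z$-variable, Vieta's formulas give $zz' = x^2 + y^2 - px - qy - s$, so that
$$|z|^2 \;\le\; |z z'| \;\le\; |x|^2 + |y|^2 + 2\alpha(|x|+|y|) + |s| \;\le\; 2|y|^2 + 4\alpha|y| + |s|,$$
using $|x| \le |y|$ and the notation $2\alpha = \max\{|p|,|q|,|r|\}$ from Notation \ref{alpha}. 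In particular, for $|y|$ larger than some threshold $M_0 = M_0(\Um)$, this gives a bound of the form $|z| \le 2|y|$.

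Next, the vertex Markoff equation rearranges to $xyz = px+qy+rz+s - x^2 - y^2 - z^2$, from which
$$|xyz| \;\le\; |x|^2 + |y|^2 + |z|^2 + 2\alpha(|x|+|y|+|z|) + |s| \;\le\; 3|z|^2 + 6\alpha|z| + |s|,$$
since $|x|,|y| \le |z|$. Dividing through by $|z|$ (which I may assume nonzero, else $|x| = |y| = 0$ is already bounded), I get
$$|x|\,|y| \;\le\; 3|z| + 6\alpha + \frac{|s|}{|z|}.$$
Combining this with the sink estimate $|z| \le 2|y|$ from the previous step yields $|x| \le 6 + O(1/|y|)$ whenever $|y| \ge M_0$; if instead $|y| < M_0$, then $|x| \le |y| < M_0$. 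Either way $|x|$ is bounded by a constant $m(\Um)$ depending only on $\Um$.

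The main obstacle is just careful bookkeeping of the constants entering into $m(\Um)$ and choosing the threshold $M_0(\Um)$ so that the various square-root and division estimates can be absorbed cleanly; all the underlying ideas are the same as in the classical Bowditch argument, with the linear terms $px + qy + rz + s$ handled as lower-order perturbations controlled by $\alpha$ and $|s|$.
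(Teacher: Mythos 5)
Your proof is correct. The overall strategy is the same as the paper's -- at a sink, order the values so that $|x|\le|y|\le|z|$ and exploit the arrow on the edge $X\cap Y$ together with the vertex equation -- but the mechanism differs in an interesting way. The paper argues by contraposition: assuming all three values are large, it divides the vertex relation by $xyz$ and shows $\bigl|\tfrac{z}{xy}+1-\tfrac{r}{xy}\bigr|<\tfrac12<\bigl|\tfrac{z}{xy}\bigr|$, i.e.\ $|z'|=|r-xy-z|<|z|$, so the edge $X\cap Y$ points away from the vertex; this is essentially an argument with the \emph{sum} of the two roots $z+z'=r-xy$. You instead work directly from the sink hypothesis $|z|\le|z'|$ and use the \emph{product} of the roots, $zz'=x^2+y^2-px-qy-s$, to get the intermediate estimate $|z|\le 2|y|$ for $|y|$ above a threshold, and then feed that back into the rearranged vertex equation to bound $|x|$. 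Your route yields a more explicit, quantitative constant $m(\Um)$ (roughly $\max\{M_0,\,6+6\alpha+|s|\}$) rather than the paper's unspecified threshold $K$, at the cost of one extra pass through the equation; the paper's version is slightly shorter and also delivers the stronger conclusion that the vertex fails to be a sink at all, which is reused implicitly elsewhere. One point worth making explicit if you write this up: your relabelling $|x|\le|y|\le|z|$ need not respect the tri-coloring of the regions, but since every linear coefficient is bounded uniformly by $2\alpha=\max\{|p|,|q|,|r|\}$, the argument is insensitive to which of $p,q,r$ appears in each slot, so the reduction is legitimate.
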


\begin{proof}
We show that, if $|x|, |y|, |z|$ are all sufficiently large, then the vertex $v$ cannot be a sink. We may assume $x,y,z \neq 0$ and $\max\{|p|,|q|,|r|\}\leq |x| \leq |y| \leq |z|$. We can rewrite (\ref{eqn:vertex}) as: $$\frac{z}{xy}+ \frac{y}{xz} + \frac{x}{yz} + 1 = \frac{p}{yz} + \frac{q}{xz} + \frac{r}{xy}+ \frac{s}{xyz}.$$
There exists $K >0$ such that if $K <  |x| \leq |y| \leq |z|$, we have $|\frac{y}{xz}|, |\frac{x}{yz}|,| \frac{p}{yz}|, |\frac{q}{xz}|, |\frac{r}{xy}|, | \frac{s}{xyz} |$ are all smaller than $\frac{1}{12}$. It follows that
$$\left| \frac{z}{xy} +1 \right| < \frac{1}{2} \mbox{    and so    }  \left| \frac{z}{xy} \right| > \frac{1}{2}.$$
On the other hand, we have
$$\left| \frac{z}{xy} +1 - \frac{r}{xy}\right| < \frac{1}{2}.$$
So we infer that $|z| > |z+xy-r|$. Hence the arrow on the edge $X \cap Y$ is directed away from $v$ which proves that $v$ is not a sink.
\end{proof}

\subsection{Neighbors around a region}\label{sec:neighbors}

For each $X \in \Omega_1$, its boundary $\partial X$ is a bi-infinite path consisting of a sequence of edges of the form $X \cap Y_n$ alternating with $X\cap Z_n$, where $(Y_n)_{n \in \Z}$ and $(Z_n)_{n \in \Z}$ are bi-infinite sequences of complementary regions in $\Omega_2$ and $\Omega_3$. The edge relations \eqref{eqn:edge2}, \eqref{eqn:edge3} are such that
\begin{align*}
y_{n+1}     & = q - x z_n - y_n; \\
z_{n+1}     & = r - x y_{n+1} - z_n \\
            & = r - x q + (x^2-1)z_n + x y_n.
\end{align*}

\begin{Remark}\label{Dehn}
The map $(x, y_n , z_n) \longrightarrow (x , y_{n+1} , z_{n+1})$ is exactly the map defined by the Dehn twist along the curve $\a \b$ defined in the introduction.
\end{Remark}

We can reformulate these equations in terms of matrices:
\begin{equation}\label{eqn:sequence}
            \begin{pmatrix} y_{n+1} \\ z_{n+1} \end{pmatrix}
=           \begin{pmatrix} -1 & -x\\x & x^2-1  \end{pmatrix}
\cdot   \begin{pmatrix} y_{n} \\ z_{n} \end{pmatrix}
+           \begin{pmatrix} q \\ r-qx \end{pmatrix}.
\end{equation}

If $x \neq \pm 2$, this can be rewritten as
\begin{equation}\label{eqn:sequence2}
            \begin{pmatrix} y_{n+1} \\ z_{n+1} \end{pmatrix}
=           \begin{pmatrix} \mathfrak{y} (x) \\ \mathfrak{z} (x) \end{pmatrix}
+           \begin{pmatrix} -1 & -x\\x & x^2-1  \end{pmatrix}
\cdot
\left[  \begin{pmatrix} y_{n} \\ z_{n}\end{pmatrix}
-           \begin{pmatrix} \mathfrak{y}(x) \\ \mathfrak{z}(x) \end{pmatrix} \right],
\end{equation}
with
$$\mathfrak{y} (x) = \frac{1}{4- x^2} (2q - xr) , \hspace{1cm} \mathfrak{z}(x) = \frac{1}{4-x^2} (2r-xq).$$

{\bf Note:} $\mathfrak{y} (x)$ and $\mathfrak{z} (x)$ are the coordinates of the center of the conic in coordinates $(y,z)$ defined by the vertex relation.

The matrix $M:=\begin{pmatrix} -1 & -x\\x & x^2-1  \end{pmatrix} $ has determinant one. Hence its eigenvalues $\lambda$ and $\lambda^{-1}$ are such that $\lambda + \lambda^{-1} = \tr M = x^2 - 2$. Explicitly, if $\d$ is a square root of $x^2 -4$ in $\C$, then the eigenvalues are given by:
$$\lambda = \frac{x^2-2+x\d }{2} , \hspace{1cm} \lambda^{-1} = \frac{x^2 -2 - x \d }{2} $$
The matrix $M$ is
    \begin{itemize}
        \item elliptic, if $\Tr M \in [-2, 2) \iff x \in (-2, 2)$;
        \item parabolic, if $\Tr M = 2 \iff x = \pm 2$;
        \item loxodromic, if $\Tr M \notin [-2, 2] \iff x \notin [-2 , 2]$.
    \end{itemize}

\medskip

\noindent {\bf Case 1:} $M$ is elliptic, that is $x \in (-2 , 2)$.
Let $\theta \in (0 , \pi )$ such that $\theta = \arccos \left( \dfrac{x^2-2}{2} \right)$. In this case, there exists an invertible matrix $P$ such that
$$ M =  P \cdot \begin{pmatrix} e^{i\theta} & 0 \\ 0 & e^{i\theta} \end{pmatrix} \cdot  P^{-1}.$$

So, the sequences $(y_n)$, $(z_n)$ are given by:
\begin{equation}\label{eqn:sequenceelliptic}
        \begin{pmatrix} y_n \\ z_n \end{pmatrix} = \begin{pmatrix} \mathfrak{y}(x) \\ \mathfrak{z}(x) \end{pmatrix}+P \cdot  \begin{pmatrix} e^{i n \theta} & 0 \\ 0 & e^{i n \theta} \end{pmatrix} \cdot  P^{-1} \cdot \left[ \begin{pmatrix} y_0 \\ z_0 \end{pmatrix} - \begin{pmatrix} \mathfrak{y}(x) \\ \mathfrak{z}(x) \end{pmatrix} \right] .
\end{equation}
Hence, the sequences $(y_n)$ and $(z_n)$ are bounded.

\medskip

\noindent {\bf Case 2:} $M$ is parabolic, that is $x \in \{ -2 , 2 \}$.
When $x=2$, we get the exact formulae:
    \begin{align*}
        y_{n}   & = n^2 (q-r) + nr - (2n-1)(y_0 + z_0); \\
        z_{n}   & = n^2 (r-q) - nq + 2n (y_0 + z_0).
    \end{align*}
Similarly, when $x=-2$, we get the formulae:
    \begin{align*}
        y_{n}   & = n^2 (q+r) - nr + (2n-1)(z_0 - y_0); \\
        z_{n}   & = n^2 (q+r) + nq + 2n (z_0 - y_0).
    \end{align*}

\medskip

\noindent {\bf Case 3:} $M$ is loxodromic, that is $x \notin [-2 , 2]$.
In this case $x^2 \neq 4$ and so $I_2 - M$ is invertible. It follows that both $x$ and $\delta$ are non-zero. We have the following formula:
\begin{equation}\label{eqn:arith-geom}
            \begin{pmatrix} y_{n} \\ z_{n} \end{pmatrix}
=           \begin{pmatrix} \mathfrak{y}(x) \\ \mathfrak{z}(x) \end{pmatrix}
+ M^n \left[
            \begin{pmatrix} y_{0} \\ z_{0} \end{pmatrix}
-           \begin{pmatrix} \mathfrak{y}(x)  \\ \mathfrak{z}(x) \end{pmatrix}
\right].
\end{equation}

Calculations give
$$M^n = \frac{1}{\sqrt{x^2-4}}
        \begin{pmatrix}
                 \frac{\lambda+1}{x}(\lambda^{-n}-\lambda^{n-1}) & \lambda^{-n}-\lambda^n \\
                 \lambda^n - \lambda^{-n} & \frac{\lambda+1}{x}(\lambda^n-\lambda^{-n-1})
        \end{pmatrix}.$$

Using the definition of $\lambda$, we can see that $\Lambda = \frac{\lambda +1}{x} = \frac{x+\delta}2 $ is a square root of $\lambda$.  And similarly $\Lambda^{-1} = \frac{\lambda^{-1} +1}{x} = \frac{x-\delta}2$ is a square root of $\lambda^{-1}$.  Moreover we have $\delta = \Lambda - \Lambda^{-1}$. So we obtain the following closed formulae for $y_n$ and $z_n$:
    \begin{align*}
        y_{n}   & = \frac{1}{\Lambda - \Lambda^{-1}}
                \left( - \Lambda^{2n} \left(\Lambda^{-1} (y_0 - \mathfrak{y}) + (z_0 - \mathfrak{z}) \right) + \Lambda^{-2n} \left( \Lambda (y_0 - \mathfrak{y}) + (z_0 - \mathfrak{z})\right) \right) + \mathfrak{y}; \\
        z_{n}   & = \frac{1}{\Lambda - \Lambda^{-1}}
                \left( \Lambda^{2n+1} \left( \Lambda^{-1} (y_0 - \mathfrak{y}) + (z_0 - \mathfrak{z}) \right) - \lambda^{-2n-1} \left( \Lambda (y_0 - \mathfrak{y}) + (z_0 - \mathfrak{z})\right) \right) + \mathfrak{z}.
    \end{align*}

This means we can express these formulae as:
\begin{align*}
    y_n         &= A \Lambda^{2n} + B\Lambda^{-2n}+\mathfrak{y}; \\
    z_n         &= - (A \Lambda^{2n+1} +  B \Lambda^{-2n-1}) + \mathfrak{z},
\end{align*}

with

\begin{align*}
    A =     & \frac{-1}{\Lambda - \Lambda^{-1}} \left( \Lambda^{-1} (y_0 - \mathfrak{y}) + (z_0 - \mathfrak{z})\right); \\
    B=  & \frac{1}{\Lambda - \Lambda^{-1}} \left( \Lambda (y_0 - \mathfrak{y}) +   (z_0 - \mathfrak{z})\right).
\end{align*}

Hence, we see that both sequences have the same behaviour. We have $|\Lambda| = 1 \iff | \lambda| = 1 \iff x \in [-2 , 2]$. Hence, when $x \notin [-2 , 2]$ and $A, B$ are both non-zero, the sequences $|y_n|$ and $|z_n|$ grow exponentially in $n$ and $-n$. To determine when at least one of $A,B=0$, we have the following identity concerning the product $AB$:
    \begin{align*}
        AB  & = \frac{-1}{\delta^2} \left( \frac{x+\delta}{2}(y_0 - \mathfrak{y}) +  (z_0 - \mathfrak{z}) \right) \left( \frac{x-\delta}{2}(y_0 - \mathfrak{y}) +  (z_0 - \mathfrak{z}) \right) \\
                & = \frac{1}{4-x^2}     \left( (y_0 - \mathfrak{y})^2 +  \left( \frac{x+\delta}{2} + \frac{x-\delta}{2} \right) (y_0 - \mathfrak{y}) (z_0 - \mathfrak{z}) + (z_0 - \mathfrak{z})^2 \right) \\
                & = \frac{1}{4-x^2}     \left( y_0^2 + x y_0 z_0 + z_0^2 - y_0(2\mathfrak{y}+x\mathfrak{z}) - z_0 (2\mathfrak{z} + x\mathfrak{y} ) + \mathfrak{y}^2+\mathfrak{z}^2+x\mathfrak{y}\mathfrak{z} \right) \\
                & = \frac{1}{4-x^2}     \left( y_0^2 + x y_0 z_0 + z_0^2 - q y_0 - r z_0 + \frac{q^2+r^2-xrq}{4-x^2} \right)\\
                & = \frac{1}{4-x^2}     \left( px+s-x^2 + \frac{q^2+r^2-xrq}{4-x^2} \right).
    \end{align*}

\begin{Remark}
The vertex relationship is $$y_n^2+xy_nz_n+z_n^2-qy_n-rz_n+x^2-px-s = 0,$$ so it is a quadric and its type depends on the sign of the determinant $D = \mathrm{det}\left[ \begin{array}{cc} 1 & \frac{x}{2}\\\frac{x}{2} & 1  \end{array}\right] = 1 - \frac{x^2}{4}.$

For detecting when the conic is degenerate we need the following determinant:
\begin{align*}
  \Delta &= \mathrm{det}\left[ \begin{array}{ccc} 1 & \frac{x}{2} & -\frac{q}{2}\\ \frac{x}{2} & 1 & -\frac{r}{2} \\ -\frac{q}{2} & -\frac{r}{2} & (x^2-px-s) \end{array}\right] \\
  &= \frac{-x^4+px^3+(4+s)x^2+2(rq-2p)x-(q^2+r^2+4s)}{4}.
\end{align*}

Note that $\Delta = AB(x^2-4)^2$. We infer that the conic is degenerate when $x$ is a solution of the equation AB=0. Following Benedetto and Goldman's notation \cite{ben_the}, we can rewrite $\Delta$ as $\Delta = -\frac{\kappa_{a,b}(x)\kappa_{c,d}(x)}{4},$ where $(a,b,c,d)$ are such that $\mathrm{GT} (a,b,c,d) = (p,q,r,s)$ and $\kappa_{a,b}(x) = x^2-abx+a^2+b^2-4$.
\end{Remark}

Let $u , v \in \C$ and denote by $x_{u,v}^{\pm}$ the two solutions of the equation $\kappa_{u,v} (x) = 0$. Using this notation the solutions of the equation $AB = 0$ are $x_{a,b}^\pm , x_{c,d}^\pm $.

We note that $AB$ does not depend on $y_0$ and $z_0$. The coefficients $A$ and $B$ are both non-zero unless $x \in \{ x_{a,b}^\pm , x_{c,d}^\pm \}$. Note also that, in the case $p=q=r=0$, we recover the formula of Tan-Wong-Zhang, namely $AB = (x^2-s)/(x^2-4)$. We can now conclude the discussion with the following lemma.

\begin{Lemma}\label{lem:neighbors}
Suppose that $X \in \Omega_1$ has neighboring regions $Y_n$ and $Z_n$, $n \in \mathbb{Z}$. Then:
\begin{enumerate}
    \item If $x \in (-2,2)$, then $|y_n|$ and $|z_n|$ remain bounded.
    \item If $x = \pm 2$, then $|y_n|$ and $|z_n|$ grow at most quadratically.
    \item If $x \not\in[-2,2]$ and $x$ is not a solution of $AB=0$, then $|y_n|$ and $|z_n|$ grow exponentially as $n \rightarrow + \infty$ and as $n \rightarrow - \infty$.
    \item If $x \not\in[-2,2]$ and $x$ is a solution of $AB=0$, then:
        \begin{itemize}
            \item $\ds\lim_{n\rightarrow -\infty} y_n = \mathfrak{y}$, $\ds\lim_{n\rightarrow -\infty} z_n = \mathfrak{z}$ and $ |y_n|$ and $|z_n|$ grows exponentially as $n \rightarrow + \infty$;or
            \item $\ds\lim_{n\rightarrow +\infty} y_n = \mathfrak{y}$, $\ds\lim_{n\rightarrow +\infty} z_n = \mathfrak{z}$ and $ |y_n|$ and $|z_n|$ grows exponentially as $n \rightarrow - \infty$; or
            \item $y_n = \mathfrak{y}$ and $z_n = \mathfrak{z}$ for all $n \in \Z$.
        \end{itemize}
\end{enumerate}
\end{Lemma}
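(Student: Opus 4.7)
The plan is that this lemma follows directly from the closed-form expressions for $(y_n, z_n)$ derived in the preceding discussion, treating the three possibilities (elliptic, parabolic, loxodromic) for the matrix $M$ separately. Essentially, the main work has already been done; what remains is a clean case-by-case reading.

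For part (1), $x \in (-2,2)$: formula \eqref{eqn:sequenceelliptic} writes $(y_n, z_n)$ as the fixed center $(\mathfrak{y}(x), \mathfrak{z}(x))$ plus the image of a fixed displacement vector under a conjugate of a rotation by angle $n\theta$, and hence is bounded. For part (2), $x = \pm 2$: the explicit polynomial formulas displayed in Case 2 of the preceding subsection exhibit $y_n$ and $z_n$ as polynomials of degree at most two in $n$, giving at most quadratic growth.

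For parts (3) and (4), $x \notin [-2,2]$: the closed form
\[
y_n - \mathfrak{y} = A\Lambda^{2n} + B\Lambda^{-2n},
\]
together with the parallel expression for $z_n$ (which involves the same $A, B$), is the key tool. Since $x \notin [-2,2]$ implies $|\Lambda| \neq 1$, we may choose the labeling of eigenvalues so that $|\Lambda| > 1$. If $AB \neq 0$, then $A\Lambda^{2n}$ dominates as $n \to +\infty$ while $B\Lambda^{-2n}$ dominates as $n \to -\infty$, producing exponential growth in both directions, which is case (3). If $AB = 0$, we split into three subcases: if $A = 0 \neq B$, then $y_n - \mathfrak{y} = B\Lambda^{-2n}$ tends to $0$ as $n \to +\infty$ and grows exponentially as $n \to -\infty$; if $B = 0 \neq A$, the symmetric conclusion holds; and if $A = B = 0$ then $y_n \equiv \mathfrak{y}$, $z_n \equiv \mathfrak{z}$. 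These three subcases are exactly the three bullets of (4).

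The only subtlety, and essentially the only place where care is needed, is that the labeling $\Lambda \leftrightarrow \Lambda^{-1}$ swaps $A \leftrightarrow B$ and $n \leftrightarrow -n$; but this simply permutes the first two bullets of (4) while fixing the third, so the statement is independent of the labeling choice. No new estimates are required: the closed-form derivations and the product formula for $AB$ have already been established in the preceding subsection, so the present proof reduces to an orderly assembly of the four cases.
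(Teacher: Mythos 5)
Your proposal is correct and follows the same route as the paper: the authors give no separate proof, since the lemma is stated as the conclusion of the preceding case analysis (elliptic, parabolic, loxodromic) and the closed formulae $y_n=A\Lambda^{2n}+B\Lambda^{-2n}+\mathfrak{y}$, $z_n=-(A\Lambda^{2n+1}+B\Lambda^{-2n-1})+\mathfrak{z}$, exactly as you assemble them. Your remark that swapping $\Lambda\leftrightarrow\Lambda^{-1}$ merely permutes the first two bullets of (4) is a correct and worthwhile point of care that the paper leaves implicit.
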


Given $\Um = \mathrm{GT} (a , b , c , d)$, let \label{S_mu}$$\mathcal{S}_{\Um} := \left\{ x_{i,j}^\pm | \{ i , j \} \subset \{ a, b , c , d \} \right\}.$$ Note that the case (4) of the previous Lemma can only happen for a Markoff map $\phi$ if one of the neighboring regions takes value in $\mathcal{S}_{\Um}$. In particular, the subcases of (4) correspond to the cases $A \neq 0$, $B \neq 0$ and $A=B=0$.

We can now give the definition of the constant $L(\Um)$ used to define the BQ-conditions. First define $M(\Um)$ as follows:
$$M(\Um) = \max \left\{ \left| \frac{2p_i - x p_j}{4-x^2} \right| , \,  x\in \mathcal{S}(\Um)\setminus \{\pm 2\} , p_i \neq p _j \in \{p,q,r\} \right\}.$$
So $M(\Um)$ is the maximum modulus of the coordinates of the center of the conic equation taken on the finite number of cases where the conic is degenerate.

\begin{Definition}\label{def:K}
Let $L(\Um) = \max \{ 2+\alpha , m(\Um) , M(\Um)+1\}$, and, similarly, we define $K(\Ut) =  L(\mathrm{GT} (\Ut))$.
\end{Definition}

\begin{Lemma}\label{lem:infiniteray}
Suppose $\beta$ is an infinite ray in $\Sigma$ consisting of a
sequence $(e_n)_{n\in \N}$ of edges of $\Sigma$ such that the arrow
on each $e_n$ assigned by $\phi$ is directed towards $e_{n+1}$. Then
the ray $\beta$ meets at least one region X with $| \phi (X) | \leq
2+ \alpha$ and an infinite number of regions with $|\phi(X)| \leq
L(\Um)$. Moreover, if $\phi^{-1}( \mathcal{S}_{\Um} ) = \emptyset$,
then $\beta$ meets an infinite number of regions with $|\phi(X)|\leq
2+\alpha$.
\end{Lemma}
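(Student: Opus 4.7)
The plan is to classify the vertices of $\beta$ and reduce to earlier lemmas. Let $v_n$ be the vertex between $e_{n-1}$ and $e_n$. Each interior vertex $v_{n+1}$ ($n \geq 0$) has the arrow on $e_n$ pointing in and the arrow on $e_{n+1}$ pointing out, so $v_{n+1}$ is neither a sink nor a source; the arrow on the third edge $f_n$ determines whether $v_{n+1}$ is a \emph{fork} (outgoing) or a \emph{merge} (incoming). If $\beta$ contains any fork at some $v_N$, then by the corollary to Lemma \ref{lem:fork}, one of the three regions meeting $v_N$ has $|\phi| \leq 2 + \alpha$, yielding the first assertion.

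Suppose now that all vertices of $\beta$ past some index are merges. I claim $\beta$ must then eventually lie on the boundary $\partial X$ of a single region $X$. Let $A_n$ denote the ``inside'' region at $v_n$, adjacent to both $e_{n-1}$ and $e_n$; then $A_n = A_{n+1}$ exactly when $\beta$ turns in the same direction at $v_n$ and $v_{n+1}$. At a merge $v_n$, the arrow direction of $f_{n-1}$ gives $|\phi(A_n)| < |\phi(A_n')|$ for the region $A_n'$ across $f_{n-1}$. A planar-combinatorial argument (the main technical step) shows that infinitely many alternations of turn direction are incompatible with these merge inequalities, so $(A_n)$ must stabilize to a single region $X$.

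With $\beta$ eventually on $\partial X$, list the neighbors of $X$ met by $\beta$ in order as $W_K, W_{K+1}, \ldots$, with $e_n = X \cap W_k$ for suitably matched indices. Using the arrow definition, the forward-arrow hypothesis on $e_n$ is equivalent to $|\phi(W_{k+1})| < |\phi(W_{k-1})|$; hence both subsequences $(|\phi(W_{2k})|)$ and $(|\phi(W_{2k+1})|)$ are strictly decreasing. Invoking Lemma \ref{lem:neighbors}, this monotone non-growth rules out its cases (2) and (3), and admits case (4) only when $x := \phi(X) \in \mathcal{S}_{\Um}$ and $\beta$ is aligned with the converging branch. In case (1), $|x| < 2 \leq 2 + \alpha$, so $X$ itself provides a region of value at most $2 + \alpha$, and boundedness of the $W_k$ yields infinitely many with $|\phi(W_k)| \leq L(\Um)$. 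In case (4), $|\phi(W_k)| \to |\mathfrak{y}(x)|$ or $|\mathfrak{z}(x)|$, both bounded by $M(\Um) \leq L(\Um) - 1$, so infinitely many $W_k$ satisfy $|\phi(W_k)| \leq L(\Um)$; the required region of value at most $2 + \alpha$ is then produced from the transient portion of $\beta$ preceding its arrival on $\partial X$, where one applies either the fork analysis or a direct merge-inequality argument on this finite segment.

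For the strengthening when $\phi^{-1}(\mathcal{S}_{\Um}) = \emptyset$, case (4) of Lemma \ref{lem:neighbors} is excluded, so $x \in (-2, 2)$ strictly; the strict monotone decay of $(|\phi(W_k)|)$ combined with Lemma \ref{lem:neighbors}(1) forces infinitely many $|\phi(W_k)| \leq 2 + \alpha$. The principal obstacle is the stabilization claim for $(A_n)$ in the all-merge case, which hinges on a careful use of the planar embedding of $\Sigma$ together with the strict merge inequalities; a secondary subtlety is guaranteeing a region of value at most $2 + \alpha$ in the special-branch sub-case of (4), which I expect to handle by a finer analysis of the pre-$\partial X$ segment of $\beta$.
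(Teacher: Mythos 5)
There is a genuine gap, and it sits at the heart of the lemma. The paper obtains the bound $2+\alpha$ not from actual forks but from \emph{near-forks}: along a descending ray the three color-sequences of adjacent regions have non-increasing moduli bounded below, so far along the ray two consecutive same-colored values differ by at most $\eta$, and rerunning the Fork Lemma computation with this $\eta$-error gives $|x|^2 \le (2+\alpha)^2 + \eta(2+\alpha)$; a second estimate from the edge relation, $|x|+|x'| \ge |yz|-|p| > 2(2+\alpha)+\alpha^2$, then removes the $\epsilon$ (with the case $\alpha=0$ delegated to Tan--Wong--Zhang). Your proposal has no substitute for this. In the all-merge case your only source of a region with $|\phi|\le 2+\alpha$ is ``the transient portion of $\beta$ preceding its arrival on $\partial X$'', but that portion can be empty and fork-free: in Example \ref{rmk:raywithfiniteintersection} the entire ray is $\partial X$, every vertex is a merge, the neighbors $W_k$ converge to a point of modulus about $32$, and the unique region with $|\phi|\le 2+\alpha$ is $X$ itself ($|x|\approx 2.016 \le 2.5$). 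Proving $|x|\le 2+\alpha$ there is exactly the quantitative step you are missing.

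Second, your stabilization claim (all merges $\Rightarrow$ $\beta$ eventually lies on a single $\partial X$) is both unproven and not the implication the paper uses. The paper derives ``eventually on one $\partial X$'' from the hypothesis that only finitely many adjacent regions satisfy $|\phi|\le 2+\alpha$, by combining monotonicity of the three color-sequences with the already-established fact that \emph{every tail} of $\beta$ has an adjacent region of modulus $\le 2+\alpha$ (forcing one of the three sequences to be finite); this is an analytic input, not a planar-combinatorial one, and I see no reason your combinatorial version should hold (for an $\mathrm{SU}(2)$-character every region already has $|\phi|\le 2$, and nothing forces a descending all-merge ray to stabilize). Finally, two smaller non-sequiturs: in your case (1), boundedness of the $W_k$ does not give $|\phi(W_k)|\le L(\Um)$, since $L(\Um)$ is a fixed constant built from $m(\Um)$ and the degenerate-conic centers while an elliptic orbit can be arbitrarily large; and in the ``moreover'' part, monotone decay of a bounded sequence does not force it below $2+\alpha$. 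The paper instead shows that the stabilized case can only occur when $x\in\mathcal{S}_{\Um}$ (so the neighbors converge to a degenerate-conic center of modulus at most $M(\Um)<L(\Um)$), and the hypothesis $\phi^{-1}(\mathcal{S}_{\Um})=\emptyset$ excludes stabilization altogether, which is how the ``moreover'' clause is actually proved.
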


\begin{proof}
\noindent $\bullet$ First we show that for any $\epsilon >0$, there
exists a region $X$ adjacent to $\beta$ such that $\phi (X) < 2 +
\alpha + \epsilon$.

Let $\eta = \frac{\epsilon}{2+\alpha}$ and let $(X_i), (Y_i)$ and $(Z_i)$ the sequences of neighboring regions along the infinite ray. The sequences $(|x_i|)$, $(|y_i|)$ and $(|z_i|)$ are decreasing and bounded below. So for $n$ large enough, we have two consecutive edges with region $X$ as a common face, i.e.  $\mathrm{vect}_\phi (e_n) = (X , Y; Z' \rightarrow Z)$ and $\mathrm{vect}_\phi (e_{n+1}) = (X , Z ; Y \rightarrow Y')$ and with $|z| \leq |z'| \leq |z|+\eta$. We can furthermore assume that $|y|>2$ and $|z|>2$. So we have easily
\begin{align*}
|xz| & = |y + y'-q|  \leq 2 |y| + |q|;  \\
|xy| & = |z+z'-r|  \leq 2|z'|+|r| \leq 2 |z| + |r| + 2 \eta .
\end{align*}
Multiplying both inequalities, we get
\begin{align*}
& &|x|^2 \cdot |yz| & \leq 4 |yz| + 2|y| \cdot |r| + 2 |z| \cdot |q| + |qr| + \eta ( 2|q| + 4 |y|),  \\
&\mbox{So} &  |x|^2 & \leq 4 + \frac{|r|}{|z|} + \frac{|q|}{|y|} + \frac{|qr|}{|yz|} + \eta \left(\frac{4}{|z|} + \frac{2|q|}{|yz|} \right),\\
& & & \leq \left( 2 + \alpha \right)^2 + \eta \left(2 + \alpha \right).
\end{align*}

Hence, we have that, for all $\epsilon >0$, there exists a region
$X$ adjacent to $\beta$ such that $|\phi(X)| < 2 + \alpha +
\epsilon$.

\medskip

\noindent $\bullet$ Next, suppose that for all neighboring regions of the infinite ray, the values are always greater than $(2+\alpha)$. By the above argument, for all $\epsilon> 0$ there exists a region $X$ such that $\phi (X) < 2+ \alpha + \epsilon$. So either $\phi (X) \leq 2 + \alpha$, or, for all $\epsilon$, there exists an edge $e_n$ in the ray such that $\mathrm{vect}_\phi (e_n) = (Y,Z ; X \rightarrow X')$ with $|x| + |x'|\leq 2 (2+\alpha)+ \epsilon$. Now the edge relation gives:

$$|x|+|x'| \geq |x + x' | \geq |yz - p| \geq |yz| - |p|. $$
On the other hand, we have:
\begin{align*}
|yz| - |p| & > (2+\alpha)^2 - |p| \\
& > 2 ( 2+ \alpha) + 2 \alpha - |p| + \alpha^2 \\
& > 2 ( 2+ \alpha) + \alpha^2.
\end{align*}

Hence we have $|x|+|x'| > 2 (2+\alpha) + \alpha^2$. Hence, if $\alpha \neq 0$, and $\epsilon$ is small enough, this gives a contradiction.

To conclude, we need to settle the case $\alpha = 0$, which is equivalent to $p=q=r=0$. In this case the triple $(-x,-y,-z)$ is a $s$--Markoff triple in the sense of Tan-Wong-Zhang, and the map is a Markoff map in their sense. So Lemma 3.11 of \cite{tan_gen} gives the argument in this case, that is, there exists a region with $|\phi(X) |< 2$.

\medskip

\noindent $\bullet$ For the second part of the statement, assume that there is only a finite number of regions meeting the ray such that $|\phi (X)| < 2+\alpha$. Then, as the ray is decreasing, it means that one of the sequences $(X_i)$, $(Y_j)$ or $(Z_k)$ that meet the infinite ray contains only a finite number of elements. Without loss of generality we can assume that it is the sequence $(X_i)$. Hence there exists $n_0$ such that for all $n \geq n_0$, the regions $Y_n$ and $Z_n$ are the neighbors of $X_{n_0}$. But we know that $(|y_j|)$ and $(|z_k|)$ are decreasing and bounded below, and from Lemma \ref{lem:neighbors}, the only possibility is that $X_{n_0} \in \phi^{-1}( \mathcal{S}_{\Um})$.

In addition, in the case where $X_{n_0} \in \phi^{-1}( \mathcal{S}_{\Um})$, the sequences $(y_n, z_n)$ converges to the center of the degenerate conic whose coordinates are strictly smaller than $L(\Um)$. Hence, there is an infinite number of $n$ such that $|y_n|$ and $|z_n|$ are smaller than $L(\Um)$.

\end{proof}

\begin{Example}\label{rmk:raywithfiniteintersection}
We give an explicit example of an infinite descending ray which intersects only a finite number of regions with values less than $2 + \alpha$.
With parameters $\Um = (0 ; 0 ; 1 ; 20)$, take a Markoff map $\phi$ such that around a vertex $v(X,Y_0,Z_0)$,
 $$x=\phi(X) = - \sqrt{3(4-\sqrt{7})} \approx -2.016 \geq -2-\alpha,$$   $$y_0=\phi(Y),  z_0=\phi(Z_0) \in \R, \quad y_0, z_0 >33$$ and $(x,y_0,z_0)$ satisfy the vertex equation (such $y_0$ and $z_0$ will always exist). Then the neighboring sequences $y_n$ and $z_n$ around $X$ (as $n \rightarrow \infty$) are both decreasing and converging towards a fixed point $\left(16+6\sqrt{7}, (8+3\sqrt{7}) \sqrt{3(4-\sqrt{7})}\right)$ which is around $(32 , 32)$. So this gives an infinite descending ray with only one neighboring region with value below $2+ \alpha$.
\end{Example}

We are now able to prove Theorem \ref{thm:omega}.

\begin{proof}[Proof of Theorem \ref{thm:omega}]
The proof follows the arguments of \cite{tan_gen}, using Lemmas \ref{lem:fork}, \ref{lem:sink} and \ref{lem:infiniteray}.
\begin{itemize}
  \item \textit{Proof of (i)}: Let $l = l(\Um) = \max \{m(\Um) , 2+ \alpha\}$. Suppose $\Omega_\phi(2+ \alpha)$ is empty, then Lemmas \ref{lem:fork} and \ref{lem:infiniteray} tell us that there exists a sink. Then Lemma \ref{lem:sink} states that around a sink, one of the region is such that $|\phi (X)| < m(\Um) $.
  \item \textit{Proof of (ii)}: Suppose the statement is false, then choose a minimal path on $\Sigma$ joining two different connected components. We will use induction on the number of edges of this path.

Suppose the path has one edge $e$. Without loss of generality we can assume $e \in E_3(\Sigma)$ and suppose $\vec e = (X,Y;Z \rightarrow W)$. Then we have:
$$ (2+\alpha)^2 < |xy| \leq |r| + |z| + |w| \leq 2(\alpha) + 2(2+\alpha),$$ so $\alpha^2 < 0$, which is absurd.

Suppose now that the path has more than one edge, then the two ends of the path are directed outwards and we derive a contradiction using Lemma \ref{lem:fork}, as discussed in detail in the proof of Theorem 1 (2) of Bowditch \cite{bow_mar}.
\end{itemize}
\end{proof}

\section{Domain of discontinuity}\label{s:domain}

The aim of this section is to prove Theorem \ref{thmA} from the Introduction. In particular, we will prove that the set $({\bf \Phi}_{\Um})_{Q}$ of maps satisfying the BQ-conditions (where the constant $K$ is taken to be $L(\Um)$, equivalently $K(\Ut)$ given by Definition \ref{def:K})
is an open subset of ${\bf \Phi}_{\Um}$ in Theorem \ref{thm:openproperty}, and that the mapping class group acts properly discontinuously on it in Theorem \ref{thm:properlydiscontinuously}. To do so, we will use the notion of Fibonacci growth, already used by Bowditch \cite{bow_mar} and Tan, Wong and Zhang \cite{tan_gen}.

\subsection{Fibonacci functions}

In this section we recall the definition, given in \cite{bow_mar}, of Fibonacci function $F_e$ associated to an edge $e \in E(\Sigma)$, and of upper or lower Fibonacci bound for a function $f\co\Omega \rightarrow [0, \infty)$.

Suppose $\vec{e} \in  \vec E(\Sigma)$, and set $\Omega^0$ to be $\{ X , Y \}$, where $e = X \cap Y$. Let $\Sigma^{\pm}$ be the two disjoint subtrees obtained when removing the interior of $e$ (such that $\Sigma^+$ is at the head of $\vec e$), and let  $\Omega^{0\pm}$ the set of regions whose boundaries  lie in $\Sigma^{\pm}$. We also denote $\Omega^{0\pm} = \Omega^0 \cup \Omega^{\pm}$.

First, we recall the notion of distance. Given $\vec e \in \vec E(\Sigma)$ with underlying edge $e$, we describe the function $d\co\Omega^{0-}(\vec e) \rightarrow \N$. For $X \in \Omega^{0-}(\vec e)$ we define $d(X)=d_{\vec e}(X)$ to be the number of edges in the shortest path joining the head of $\vec e$ to $X$. Given any $Z \in \Omega^{-}(\vec e)$, there are precisely two regions $X,Y \in \Omega^{0-}(\vec e)$ meeting $Z$ and satisfying $d(X)<d(Z)$ and $d(Y)<d(Z)$. Note that $X,Y,Z$ all meet in a vertex.

Now we can define the {\it Fibonacci function} $F_e\co \Omega \rightarrow {\mathbf N}$ with respect to an edge $e$ as follows. We orient $e$ arbitrarily as $\vec e$ and define $F_{\vec e}\co \Omega^{0-}(\vec e) \rightarrow {\mathbf N}$ by
$$F_{\vec e}(Z)=
\begin{cases}
1,  & \mbox{if } Z \in \Omega^{0}(e)\\
F_{\vec e}(X)+F_{\vec e}(Y), & \mbox{if } Z \in \Omega^{-}(\vec e),
\end{cases}$$
and $X,Y \in
\Omega^{0-}(\vec e)$ are the two regions described above: $X\cap Y \cap Z \neq \varnothing$ and $d(X)<d(Z), d(Y)<d(Z)$. Now we define $F_e$ by the following:
$$F_e(X)=
\begin{cases}
F_{\vec e}(X),  & X \in \Omega^{0-}(\vec e)\\
F_{-\vec e}(X), & \mbox{if } X \in \Omega^{+}(\vec e).
\end{cases}$$

The functions $F_e$ provide a means for measuring the growth rates of functions defined on subsets of $\Omega$. The following lemma can be easily proved by induction. Its corollary shows that the concept of upper and lower Fibonacci bound is independent of the edge $e$ used.

\begin{Lemma}\label{lem:B2.1.1}{\rm (Lemma 2.1.1 in \cite{bow_mar})}
Suppose $f\co \Omega^{0-}(\vec e) \rightarrow [0, \infty)$, where
$\Omega^{0}(e)=\{X_1,X_2\}$.
\begin{enumerate}[(i)]
  \item If $f$ satisfies $f(Z) \le f(X)+f(Y)+c$ for some fixed constant $c$ and arbitrary $X, Y, Z \in \Omega^{0-}(\vec e)$ meeting at a vertex and satisfying $d(X)<d(Z)$ and $d(Y)<d(Z)$, then $f(X) \le (M+c)F_e(X)-c$ for all $X \in \Omega^{0-}(\vec e)$, where $M=\max\{f(X_1), f(X_2)\}$.
  \item If $f$ satisfies $f(Z) \ge f(X)+f(Y)-c$ for some fixed constant $c$, where $0<c<m=\min\{f(X_1), f(X_2)\}$ and arbitrary $X,Y,Z$ as in part (i), then $f(X) \ge (m-c)F_e(X)+c$ for all $X \in \Omega^{0-}(\vec e)$.
\end{enumerate}
\end{Lemma}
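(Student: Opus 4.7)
The plan is to prove both parts by induction on the distance function $d = d_{\vec e}$ inside the half-tree $\Omega^{0-}(\vec e)$; the symmetric half $\Omega^{0+}(\vec e)$ is handled by reversing the orientation of $e$ and applying the same argument, which is why $F_e$ is well-defined on all of $\Omega$. The key structural fact driving the induction is that every region $Z \in \Omega^-(\vec e)$ with $d(Z) \geq 1$ has exactly two predecessors $X, Y \in \Omega^{0-}(\vec e)$ meeting $Z$ at a common vertex with $d(X), d(Y) < d(Z)$, and moreover $F_e(Z) = F_e(X) + F_e(Y)$ by the very definition of the Fibonacci function. So every proof obligation reduces to a single combinatorial identity plus the given hypothesis at one vertex.

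For part (i), the base case consists of the two regions $X_1, X_2 \in \Omega^0(e)$; here $F_e(X_i) = 1$, so the claimed bound reads $f(X_i) \leq (M+c) - c = M$, which holds by the definition of $M = \max\{f(X_1), f(X_2)\}$. For the inductive step I would take $Z$ with $d(Z) \geq 1$, apply the given subadditive hypothesis $f(Z) \leq f(X) + f(Y) + c$ at the vertex meeting $X, Y, Z$, and substitute the inductive bounds $f(X) \leq (M+c)F_e(X) - c$ and $f(Y) \leq (M+c)F_e(Y) - c$. The arithmetic then collapses cleanly: the two $-c$ terms cancel with the $+c$ from the hypothesis to leave a single $-c$, giving
\[
f(Z) \leq (M+c)\bigl(F_e(X) + F_e(Y)\bigr) - c = (M+c)F_e(Z) - c,
\]
as required.

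For part (ii), the argument is a mirror image. The base case needs $f(X_i) \geq (m-c) + c = m$, which is immediate from $m = \min\{f(X_1), f(X_2)\}$; the hypothesis $c < m$ guarantees $m - c > 0$ so the bound really is a nontrivial lower bound that grows with $F_e$. The inductive step uses the superadditive hypothesis $f(Z) \geq f(X) + f(Y) - c$ together with the inductive lower bounds, and the same constant bookkeeping yields $f(Z) \geq (m-c)F_e(Z) + c$.

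There is no genuine obstacle here: the lemma is essentially an exercise in matching the additive constants on both sides of the recursion $F_e(Z) = F_e(X) + F_e(Y)$, and the constants $-c$ in (i) and $+c$ in (ii) are specifically chosen so that the induction closes in one line. The only mildly delicate point is the hypothesis $c < m$ in (ii), which must be invoked to keep $m - c$ strictly positive so that the bound is not vacuous and the induction propagates a genuinely nontrivial estimate.
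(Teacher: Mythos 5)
Your proof is correct and is exactly the routine induction on $d_{\vec e}$ that the paper intends: the paper itself gives no proof, simply citing Lemma 2.1.1 of Bowditch and remarking that it ``can be easily proved by induction,'' and your constant bookkeeping in both the base case and the inductive step checks out, including the role of the hypothesis $c<m$ in keeping the lower bound in (ii) nontrivial.
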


\begin{Corollary}\label{cor:B2.1.2}{\rm (Corollary 2.1.2 in \cite{bow_mar})}
Suppose $f\co \Omega \rightarrow [0, \infty)$ satisfies an inequality of the form $f(Z) \le f(X)+f(Y)+c$ for some fixed constant $c$, whenever $X,Y,Z \in \Omega$ meet at a vertex. Then for any given edge $e \in E(\Sigma)$, there is a constant $K>0$, such that $f(X) \le KF_e(X)$ for all $X \in \Omega$.
\end{Corollary}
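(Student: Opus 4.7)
The plan is to deduce the corollary directly from Lemma \ref{lem:B2.1.1}(i) by applying it on both sides of the edge $e$ and then combining the two bounds.

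First I would orient $e$ arbitrarily as $\vec e$ and let $\Omega^0(e)=\{X_1,X_2\}$. Since $f\co\Omega\rightarrow[0,\infty)$ satisfies the sub-additivity inequality $f(Z)\le f(X)+f(Y)+c$ at every vertex, the restriction of $f$ to $\Omega^{0-}(\vec e)$ satisfies the hypothesis of Lemma \ref{lem:B2.1.1}(i). Applying that lemma gives
\[
    f(X) \le (M+c)\,F_{\vec e}(X) - c \le (M+c)\,F_{\vec e}(X)
\]
for every $X\in\Omega^{0-}(\vec e)$, where $M=\max\{f(X_1),f(X_2)\}$ (and I may assume $c\ge 0$, since if $c<0$ the hypothesis only becomes stronger, so replacing $c$ by $0$ is harmless).

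Next, I would repeat the argument with the reverse orientation $-\vec e$. The two regions in $\Omega^0(e)$ are the same, so the constant $M$ is unchanged; Lemma \ref{lem:B2.1.1}(i) now gives
\[
    f(X) \le (M+c)\,F_{-\vec e}(X)
\]
for every $X\in\Omega^{0-}(-\vec e) = \Omega^{0+}(\vec e)$. By the very definition of the Fibonacci function $F_e$, we have $F_e(X)=F_{\vec e}(X)$ on $\Omega^{0-}(\vec e)$ and $F_e(X)=F_{-\vec e}(X)$ on $\Omega^{+}(\vec e)$. Since $\Omega=\Omega^{0-}(\vec e)\cup\Omega^{+}(\vec e)$, setting $K:=M+c$ yields $f(X)\le K\,F_e(X)$ for all $X\in\Omega$, which is the desired conclusion.

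There is no real obstacle here, as this is a clean two-line corollary of Lemma \ref{lem:B2.1.1}. The only small point to check is that $F_e(X)\ge 1$ everywhere so that the $-c$ term in the bound from the lemma can be safely dropped when absorbing it into $K F_e(X)$; this is immediate since $F_e$ equals $1$ on $\Omega^0(e)$ and is inductively defined as a sum of positive integers elsewhere. Hence $K=M+c=\max\{f(X_1),f(X_2)\}+c$ works.
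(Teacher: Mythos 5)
Your proof is correct and is exactly the intended derivation: the paper quotes this corollary from Bowditch without writing out a proof, but the standard argument is precisely to apply Lemma \ref{lem:B2.1.1}(i) to both orientations $\vec e$ and $-\vec e$ and combine the two bounds via the definition of $F_e$, as you do. Your handling of the $-c$ term (using $F_e \ge 1$) and of the sign of $c$ is also sound.
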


Now we can define what  it means for a function $f\co\Omega \rightarrow [0, \infty)$ to have an upper or lower Fibonacci bound.

\begin{Definition}
  Suppose $f\co \Omega \rightarrow [0, \infty)$, and $\Omega^{\prime} \subseteq \Omega$. We say that:
  \begin{itemize}
    \item $f$ has an {\it upper Fibonacci bound on} $\Omega^{\prime}$ if there is some constant $\kappa > 0$ such that $f(X) \le \kappa\, F_e(X)$ for all $X \in \Omega^{\prime}$;
    \item $f$ has a {\it lower Fibonacci bound on} $\Omega^{\prime}$ if there is some constant $\kappa > 0$ such that  $f(X) \ge \kappa\, F_e(X)$ for all but finitely many $X \in \Omega^{\prime}$;
    \item $f$ has {\it Fibonacci growth} on $\Omega^{\prime}$ if it has both upper and lower Fibonacci bounds on $\Omega^{\prime}$;
    \item $f$ has {\it Fibonacci growth} if $f$ has Fibonacci growth on {\it all} of $\Omega$.
  \end{itemize}
\end{Definition}

The following lemma tells us that, given an arbitrary $\Um$--Markoff map $\phi$, the function $\log^{+}|\phi|:= \max\{\log|\phi|, 0\}$  always has an upper Fibonacci bound on $\Omega$. Hence we will only need to consider criteria for it to have a lower Fibonacci bound on certain branches of the binary tree $\Sigma$ in order to determine if it has Fibonacci growth.

\begin{Lemma}\label{lem:upper_bound}
If $\phi \in {\bf \Phi}_{\Um}$, then $\log^{+}|\phi|$ has an upper Fibonacci bound on $\Omega$.
\end{Lemma}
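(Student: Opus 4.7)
The plan is to apply Corollary \ref{cor:B2.1.2} to the function $f = \log^+|\phi|\co \Omega \to [0, \infty)$. This reduces the problem to producing a constant $c = c(\Um)$ such that whenever three regions $X, Y, Z$ meet at a vertex,
\begin{equation*}
\log^+|\phi(Z)| \;\le\; \log^+|\phi(X)| + \log^+|\phi(Y)| + c.
\end{equation*}
Setting $x=\phi(X), y=\phi(Y), z=\phi(Z)$, this becomes a purely algebraic inequality for arbitrary $\Um$--Markoff triples $(x,y,z)$; because the vertex equation plays a symmetric role in $x,y,z$ up to permuting the coefficients $p,q,r$, it is enough to verify it once and then enlarge $c$ to the maximum over the three choices of ``child'' variable.

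The core estimate is a polynomial bound on $|z|$ in terms of $|x|,|y|$. I would view the vertex equation \eqref{eqn:vertex} as a monic quadratic in $z$,
\begin{equation*}
z^2 + (xy-r)\, z + (x^2+y^2-px-qy-s) \;=\; 0,
\end{equation*}
and apply the quadratic formula together with the triangle inequality and $\sqrt{a+b}\le\sqrt{a}+\sqrt{b}$ to obtain
\begin{equation*}
|z| \;\le\; |xy-r| + \sqrt{\,x^2+y^2+|p||x|+|q||y|+|s|\,}.
\end{equation*}
Choosing $C'=C'(\Um)$ large enough (for instance $C' \ge \max\{|p|/2,|q|/2,\sqrt{|s|}\}$) so that $(|x|+|y|+C')^2$ dominates the radicand pointwise, the square root is bounded by $|x|+|y|+C'$. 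Absorbing the remaining linear terms into the product $(1+|x|)(1+|y|)$ then yields $|z|\le B(\Um)(1+|x|)(1+|y|)$ for some $B(\Um)>0$.

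Taking logarithms concludes the argument: using
\begin{equation*}
\log\bigl(1+B(1+|x|)(1+|y|)\bigr) \;\le\; \log(1+B)+\log(1+|x|)+\log(1+|y|)
\end{equation*}
together with the trivial bound $\log(1+|a|)\le \log^+|a|+\log 2$ valid for any $a\in\C$, one reads off the required inequality $\log^+|z| \le \log^+|x| + \log^+|y| + c(\Um)$, so the hypothesis of Corollary \ref{cor:B2.1.2} is satisfied. There is no conceptual obstacle here --- the only work is the polynomial bookkeeping in the quadratic estimate --- and the $\Um$--Markoff case follows the same pattern as in \cite{bow_mar, tan_gen}, with the extra linear terms in the vertex relation contributing only lower-order corrections absorbed into $c(\Um)$.
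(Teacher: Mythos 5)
Your proposal is correct and follows essentially the same route as the paper: both reduce the lemma to Corollary \ref{cor:B2.1.2} by establishing an inequality of the form $\log^+|z|\le\log^+|x|+\log^+|y|+c(\Um)$ for $\Um$--Markoff triples, the only difference being that you derive the underlying polynomial bound on $|z|$ by solving the vertex relation as a monic quadratic in $z$, whereas the paper rearranges the equation and splits into cases according to whether $|z|\le 2\max\{|x|,|y|\}$. The argument is sound (the radicand should read $|x|^2+|y|^2+|p||x|+|q||y|+|s|$ since $x,y\in\C$, but this is only a notational slip), and your remark that symmetry in $p,q,r$ lets you enlarge $c$ over the three choices of ``child'' region is exactly what makes the hypothesis of Corollary \ref{cor:B2.1.2} hold at every vertex.
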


\begin{proof}
We will follow the arguments of \cite{tan_gen}. By Corollary \ref{cor:B2.1.2} we only need to show that for an arbitrary $\Um$--Markoff map $(x,y,z)$ we have:
\begin{equation}\label{eqn:upper_bound}
  \begin{split}
  \log^{+}|z| & \le \log^{+}|x|  + \log^{+}|y| + \log 10 \\
  &   + \log^{+}|p|+\log^{+}|q|+ \log^{+}|r| + \log^{+}|s|.
  \end{split}
\end{equation}

If $|z| \le 2|x|$ or $|z|\le 2|y|$, then \eqref{eqn:upper_bound} holds already. So we suppose $|z| \ge 2|x|$ and $|z| \ge 2|y|$. Then, since $px+qy+rz+s - xyz = x^2 + y^2 + z^2$, we have:
\begin{eqnarray*}
|px| + |qy| + |rz| + |s| +|xyz| &\ge& |z|^2-|x|^2-|y|^2 \\
&=& |z|^2/2 + (|z|^2/4-|x|^2) + (|z|^2/4-|y|^2) \\
&\ge& |z|^2/2.
\end{eqnarray*}
Hence $|z|^2 \le 10 \max\{|px|, |qy|, |rz|, |s|, |xyz|\}$, that is, according to the value of $\max\{|px|, |qy|, |rz|, |s|, |xyz|\}$, we have, respectively:
\begin{enumerate}
  \item $|z|^2 \le 10 |px|$;
  \item $|z|^2 \le 10 |qy|$;
  \item $|z|^2 \le 10 |rz|$;
  \item $|z|^2 \le 10 |s|$; or
  \item $|z|^2 \le 10 |xyz|$.
\end{enumerate}
Thus, since we may assume $|z| \ge 1$, we have respectively:
\begin{enumerate}
  \item $|z| \le |z|^2 \le 10|px|$;
  \item $|z| \le |z|^2 \le 10|qy|$;
  \item $|z| \le 10|r|$;
  \item $|z| \le |z|^2 \le  10|s|$; or
  \item $|z| \le  10|xy|$.
\end{enumerate}
From this, Equation \eqref{eqn:upper_bound} follows easily.
\end{proof}

The lower Fibonacci bounds are more interesting since, as the following proposition shows, they give the convergence of certain series, see \cite{bow_mar}.

\begin{Proposition}\label{prop:B2.1.4}{\rm (Proposition 2.1.4 in \cite{bow_mar})}
If $f\co \Omega \rightarrow [0, \infty)$ has a lower Fibonacci bound, then ${\sum}_{X\in\Omega}f(X)^{-s}$ converges for all $s>2$ (after excluding a finite subset of $\Omega$ on which $f$ takes the value $0$).
\end{Proposition}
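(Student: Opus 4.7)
The plan is to reduce the convergence of $\sum_X f(X)^{-s}$ to a classical number-theoretic series by using the mediant/Stern--Brocot structure that the Fibonacci function $F_e$ encodes.

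First I would use the lower Fibonacci bound hypothesis to reduce to showing that $\sum_{X \in \Omega} F_e(X)^{-s} < \infty$ for $s > 2$: if $f(X) \ge \kappa F_e(X)$ for all but finitely many $X \in \Omega$, then outside a finite exceptional set (which also absorbs the zeros of $f$ allowed in the statement) we have $f(X)^{-s} \le \kappa^{-s} F_e(X)^{-s}$. Next, I would split $\Omega = \Omega^0 \cup \Omega^+ \cup \Omega^-$ for a chosen orientation $\vec e$; the first set has two elements and, by the symmetry $\vec e \leftrightarrow -\vec e$, it suffices to estimate the sum over $\Omega^-$.

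The crux is a bijection between $\Omega^-$ and the set of coprime pairs $\{(a,b) \in \Z_{\ge 1}^2 : \gcd(a,b)=1\}$ under which $F_e(X) = a+b$. I would set this up inductively: label the two regions of $\Omega^0$ by $(1,0)$ and $(0,1)$ (Fibonacci value $1 = 1+0$); whenever two labeled regions $X, Y$ with labels $(a_1,b_1)$ and $(a_2,b_2)$ meet along an edge and are the two ``parents'' spawning a new region $Z \in \Omega^-$ in the Fibonacci recursion, assign $Z$ the mediant label $(a_1+a_2,\,b_1+b_2)$. The identity $F_e(Z) = F_e(X)+F_e(Y) = (a_1+b_1)+(a_2+b_2) = (a_1+a_2)+(b_1+b_2)$ shows that the labels transport $F_e$ to the coordinate sum, and the standard Stern--Brocot enumeration identifies the image with all coprime pairs in $\Z_{\ge 1}^2$ (equivalently, this is the Farey/Keen--Series identification of $\hat{\Omega}$ with $\hat{\Q}$ restricted to one half-plane).

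Once the bijection is in place, the sum becomes
\begin{equation*}
\sum_{X \in \Omega^-} F_e(X)^{-s} \;=\; \sum_{\substack{a,b \ge 1 \\ \gcd(a,b)=1}} \frac{1}{(a+b)^s} \;=\; \sum_{n \ge 2} \frac{\varphi(n)}{n^s} \;\le\; \sum_{n \ge 2} \frac{1}{n^{s-1}},
\end{equation*}
since pairs with $a+b = n$ and $\gcd(a,b)=1$ biject with reduced residues $\bmod\, n$, counted by $\varphi(n) \le n$; the right-hand side converges for $s > 2$. The main obstacle I anticipate is the combinatorial bookkeeping for the Stern--Brocot bijection done purely from the abstract trivalent tree $\Sigma$. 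A naive depth-by-depth count bounds the $2^{n-1}$ depth-$n$ regions using only the minimal depth-$n$ value of $F_e$, which is $n+1$ (achieved along the ``slow ray'' always pairing with an $\Omega^0$ region); this gives a term $2^{n-1}(n+1)^{-s}$, divergent for every $s$. So the finer mediant structure, which forces most depth-$n$ regions to have Fibonacci values growing exponentially in $n$, is genuinely essential.
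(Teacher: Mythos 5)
Your argument is correct and is essentially the proof given by Bowditch for Proposition 2.1.4 in \cite{bow_mar} (which this paper simply cites rather than reproves): reduce to $\sum_X F_e(X)^{-s}$, identify the regions on one side of $e$ with coprime pairs via the Farey/Stern--Brocot mediant structure so that $F_e$ becomes $a+b$, and bound the resulting sum by $\sum_n \varphi(n)n^{-s}\le\sum_n n^{1-s}$. Your closing remark correctly identifies why a naive depth count fails and why the mediant bijection is the essential point.
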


The following lemma and corollary hold with proofs similar to the ones given in \cite{bow_mar}, giving a criterion for $\log^{+}|\phi|$ to have a lower Fibonacci bound on certain branches of $\Sigma$. 

\begin{Lemma}\label{lem:B3.5}
  \begin{enumerate}[(i)]
        \item Suppose $\vec e \in \vec E(\Sigma)$ is such that $\mathrm{vect}_\phi(e)=\vec e$ and $\Omega^0(e) \cap \Omega_\phi(2+\alpha) = \emptyset$. Then $\Omega^{0-}(\vec e) \cap \Omega_\phi(2+\alpha) = \emptyset$ and the arrow on each edge of $\Sigma^{-}$ is directed towards $e$.

        \item Furthermore, $\log |\phi(X)| \ge (m - \log 2)F_e(X)$ for all $X \in \Omega^{0-}(\vec e)$, where $m=\min \{\log |\phi(X)| \mid X \in \Omega^0(e)\} > \log (2+\alpha)$.
    \end{enumerate}
\end{Lemma}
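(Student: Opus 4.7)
The plan is to prove both parts by a single induction that sweeps outward from $e$ through $\Sigma^-$, using the edge and vertex relations together with the Fork Lemma to propagate lower bounds on $|\phi|$ and arrow orientations simultaneously.

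For part (i), let $v$ denote the tail vertex of $\vec e$; write $X_1, X_2 \in \Omega^0(e)$ for the two regions bounding $e$ and $N$ for the third region at $v$, which necessarily lies in $\Omega^-$. The edge relation along $e$ takes the form $\phi(N) + \phi(N') = c - \phi(X_1)\phi(X_2)$, where $c \in \{p,q,r\}$ is determined by the colour of $e$ and $N'$ is the third region at the head of $\vec e$ (in $\Omega^+$). Since $\mathrm{vect}_\phi(e) = \vec e$ forces $|\phi(N)| \geq |\phi(N')|$, this gives
$$ 2|\phi(N)| \;\geq\; |\phi(N)| + |\phi(N')| \;\geq\; |\phi(X_1)\phi(X_2)| - |c| \;>\; (2+\alpha)^2 - 2\alpha \;=\; 4 + 2\alpha + \alpha^2, $$
whence $|\phi(N)| > 2 + \alpha + \alpha^2/2 \geq 2+\alpha$. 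To exclude outward arrows on the remaining two edges at $v$, I would argue by contradiction: if, say, the arrow on $X_1 \cap N$ also pointed away from $v$, then two edges adjacent to $X_1$ at $v$ would be outward, and the Fork Lemma (Lemma \ref{lem:fork}) would force one of $|\phi(X_1)| \leq 2+\alpha$, $|\phi(X_2)| < 2$, or $|\phi(N)| < 2$ to hold, each contradicting a bound just established. The inductive step is structurally identical: at any deeper vertex $v^*$ of $\Sigma^-$, two of the three regions at $v^*$ are controlled by the induction hypothesis, the arrow on the edge at $v^*$ that points back toward $e$ plays the role of $\vec e$ in the base step, and the edge relation together with the Fork Lemma simultaneously propagate the lower bound on the newly appearing region and the inward orientation of the two edges leading deeper.

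For part (ii), once (i) is established, I would apply Lemma \ref{lem:B2.1.1}(ii) to $f = \log|\phi|$ on $\Omega^{0-}(\vec e)$ with $c = \log 2$. At any vertex $v^* \in \Sigma^-$ with regions $X^*, Y^*, N^*$ (where $N^*$ is the newest, and $N^{**}$ denotes the third region on the other side of the edge $X^* \cap Y^*$), the edge relation together with the inward orientation from (i) gives $2|\phi(N^*)| \geq |\phi(X^*)\phi(Y^*)| - 2\alpha$. Because part (i) supplies $|\phi(X^*)|, |\phi(Y^*)| > 2+\alpha$, the quadratic term dominates the correction and produces a Fibonacci-type recurrence of the form
$$ \log|\phi(N^*)| \;\geq\; \log|\phi(X^*)| + \log|\phi(Y^*)| - \log 2. $$
The hypothesis that $|\phi(X_i)| > 2+\alpha$ for $X_i \in \Omega^0(e)$ ensures $m > \log(2+\alpha) \geq \log 2$, so Lemma \ref{lem:B2.1.1}(ii) applies with $c = \log 2 < m$ and yields $\log|\phi(X)| \geq (m-\log 2)F_e(X) + \log 2 \geq (m-\log 2)F_e(X)$ for all $X \in \Omega^{0-}(\vec e)$.

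The main technical point is absorbing the affine correction $|c| \leq 2\alpha$ from the edge relation into the multiplicative bound supplied by the quadratic term $|\phi(X^*)\phi(Y^*)|$: this is comfortable in part (i) because $(2+\alpha)^2 - 2\alpha > 2(2+\alpha)$, and in part (ii) because $|\phi(X^*)||\phi(Y^*)| > (2+\alpha)^2 \geq 4\alpha$ by (i), leaving enough slack to convert the bound on $2|\phi(N^*)|$ into a clean Fibonacci recurrence. When $\alpha = 0$ (so $p=q=r=0$) the whole argument specialises to that of Tan--Wong--Zhang \cite{tan_gen}, with no correction terms to absorb.
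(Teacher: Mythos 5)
Your part (i) is correct, and in fact more self-contained than the paper's own argument: the paper deduces $\Omega^{0-}(\vec e)\cap\Omega_\phi(2+\alpha)=\emptyset$ by appealing to the connectedness of $\Omega_\phi(2+\alpha)$ (Theorem \ref{thm:omega}(ii)) together with the one-edge estimate, whereas you run a direct induction outward through $\Sigma^-$, propagating the bound $|\phi(N)|>2+\alpha$ via the edge relation and the inward orientation of the remaining edges via the Fork Lemma. Both routes work; yours avoids invoking quasi-convexity, at the cost of repeating the fork-type estimate at every vertex.

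Part (ii) has a genuine quantitative gap. From $2|\phi(N^*)|\ge|\phi(X^*)||\phi(Y^*)|-2\alpha$ and the slack $|\phi(X^*)||\phi(Y^*)|>(2+\alpha)^2\ge 4\alpha$, you only obtain $2|\phi(N^*)|\ge\frac{1}{2}|\phi(X^*)||\phi(Y^*)|$, hence
$$\log|\phi(N^*)|\;\ge\;\log|\phi(X^*)|+\log|\phi(Y^*)|-\log 4,$$
not the claimed recurrence with $-\log 2$; the missing additive $2\alpha$ cannot be absorbed for free when $\alpha>0$. This matters because Lemma \ref{lem:B2.1.1}(ii) requires $c<m$, and with $c=\log 4$ this is not guaranteed: you only know $m>\log(2+\alpha)$, which is smaller than $\log 4$ whenever $\alpha<2$. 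The paper absorbs the correction on the other side of the inequality: since $|\phi(N^*)|\ge 2$ by part (i), one has $|c'|\le 2\alpha\le\alpha|\phi(N^*)|$, so $|\phi(X^*)||\phi(Y^*)|\le 2|\phi(N^*)|+|c'|\le(2+\alpha)|\phi(N^*)|$, giving the recurrence with constant $c=\log(2+\alpha)$, which is strictly less than $m$ by hypothesis, so Lemma \ref{lem:B2.1.1}(ii) applies. (Even so, the conclusion is $\log|\phi(X)|\ge(m-\log(2+\alpha))F_e(X)+\log(2+\alpha)$ rather than the stated $(m-\log 2)F_e(X)$; the constant $\log 2$ in the statement is inherited from Bowditch's $\alpha=0$ case, and what the sequel actually needs is only that some positive lower Fibonacci bound holds.) Your closing remark that the $\alpha=0$ case reduces to Tan--Wong--Zhang is correct, but for $\alpha>0$ the recurrence constant must be tracked as above.
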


\begin{Corollary}\label{cor:B3.7}
  If $\Omega_\phi(2+\alpha)=\emptyset$, then there is a unique sink, and $\log^{+}|\phi|$ has a lower Fibonacci bound.
\end{Corollary}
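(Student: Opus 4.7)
The plan is to split the proof into three steps: existence of a sink, uniqueness, and derivation of the Fibonacci lower bound. The hypothesis $\Omega_\phi(2+\alpha)=\emptyset$ will be fed into the corollary of the Fork Lemma to rule out both forks and sources, since either of these would force one of the three adjacent regions to lie in $\Omega_\phi(2+\alpha)$. So every vertex of $\Sigma$ will be either a sink or a merge.

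For existence, I will argue by contradiction: if no vertex is a sink, every vertex is a merge, so each vertex has exactly one outgoing edge. Following the outgoing arrows from any starting edge produces an infinite descending ray $\beta$, and Lemma~\ref{lem:infiniteray} then supplies a region $X$ adjacent to $\beta$ with $|\phi(X)| \leq 2+\alpha$, contradicting the hypothesis. For uniqueness, suppose two distinct sinks $v_1, v_2$ exist and consider the unique simplicial path in $\Sigma$ joining them. The first edge of this path is oriented toward $v_1$ and the last is oriented toward $v_2$, so the orientation (relative to the direction of travel) must switch at some interior vertex $u$. At $u$ the two path edges both point outward, which combined with the third edge exhibits $u$ as a fork or a source, contradicting what we established in the first paragraph.

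For the Fibonacci lower bound, let $v_0$ be the unique sink and let $e_1,e_2,e_3$ be the three edges incident to $v_0$, each oriented toward $v_0$ by $\phi$. The three regions $R_1, R_2, R_3$ at $v_0$ lie in $\Omega^0(e_i)$ and all satisfy $|\phi(R_j)| > 2+\alpha$. Hence Lemma~\ref{lem:B3.5}(ii) applies to each $\vec e_i$ and provides a constant $\kappa_i > 0$ with $\log|\phi(X)| \geq \kappa_i F_{e_i}(X)$ for every $X \in \Omega^{0-}(\vec e_i)$. The three sets $\Omega^{0-}(\vec e_i)$ cover all of $\Omega$ (they only overlap in the finite set $\{R_1,R_2,R_3\}$). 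Since $|\phi(X)| > 2 > 1$ everywhere, $\log^+|\phi| = \log|\phi|$, so the three local bounds give the right control up to choosing a single reference edge.

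The main technical step, and the one I expect to be the main obstacle, is converting the three local bounds, written in terms of three different Fibonacci functions $F_{e_1}, F_{e_2}, F_{e_3}$, into a single lower bound of the form $\log^+|\phi(X)| \geq \kappa F_{e}(X)$ for one fixed edge $e$. I will take $e = e_1$ and use the fact that Fibonacci functions based at different edges are Lipschitz-comparable outside a finite neighborhood of the path joining the two edges; this follows from the recursive definition since shifting the base edge by a bounded distance alters $F_e$ by at most a bounded multiplicative factor (the standard comparison argument from Bowditch~\cite{bow_mar}, which also underlies the remark after Corollary~\ref{cor:B2.1.2} that the notion of Fibonacci bound is independent of the chosen edge). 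Combining the three Lemma~\ref{lem:B3.5}(ii) bounds with this comparison yields a single constant $\kappa > 0$ and a finite exceptional set such that $\log^+|\phi(X)| \geq \kappa F_{e_1}(X)$ outside the exceptional set, which is the required lower Fibonacci bound.
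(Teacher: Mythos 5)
Your proposal is correct and follows essentially the same route as the paper, which simply cites Lemma \ref{lem:infiniteray} for existence of the sink, Lemma \ref{lem:fork} for uniqueness, and Lemma \ref{lem:B3.5} for the lower Fibonacci bound; you have filled in exactly those three steps (the descending-ray contradiction, the fork on a path between two sinks, and the application of Lemma \ref{lem:B3.5}(ii) to the three edges at the sink). The final patching step you flag is indeed handled by the standard base-edge independence of Fibonacci bounds from Bowditch, so there is no gap.
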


\begin{proof}[Proof of Lemma \ref{lem:B3.5}]
   We will follow Bowditch's discussion \cite{bow_mar}. Suppose $\vec e \in \vec E(\Sigma)$ is such that $\alpha(e)=\vec e$. Since $\Omega_\phi (2+\alpha)$ is connected, we must have either $\Omega_\phi (2+\alpha) \subset \Omega^{+}(\vec e)$ or $\Omega_\phi (2+\alpha) \subset \Omega^{-}(\vec e)$ (possibly $\Omega_\phi (2+\alpha)= \emptyset$). If $\Omega_\phi (2+\alpha) \subset \Omega^{-}(\vec e)$ and $\Omega_\phi (2+\alpha) \neq \emptyset$, then $\alpha(e)=-\vec e$ (using an argument similar to the proof of Theorem \ref{thm:omega} (ii)). This proves the first part of (i). For proving that arrows are directed towards $e$ one should use Lemma \ref{lem:fork}.

  For the second part of the lemma, let $X, Y, Z \in \Omega^{0-}(\vec e)$ meeting at a vertex and satisfying $d(X)<d(Z)$ and $d(Y)<d(Z)$, as in Lemma \ref{lem:B2.1.1}. Without loss of generality suppose $e \in E_3(\Sigma)$. By (i) we know that the arrow on $X \cap Y$ points away from $Z$. We know also that $|r| \leq 2 \alpha \leq |z| \alpha$. So we have that $|xy| \le 2|z| + |r| \le (2+\alpha) |z|$. Thus $\log^{+} |\phi(Z)| \ge \log^{+}|\phi(X)| + \log^{+}|\phi(Y)| - \log (2+\alpha)$. We can now apply Lemma \ref{lem:B2.1.1} (ii).
\end{proof}

\begin{proof}[Proof of Corollary \ref{cor:B3.7}]
  Again, following Bowditch's ideas, we recall that the existence comes from Lemma \ref{lem:infiniteray}, the uniqueness from Lemma \ref{lem:fork}, while the upper Fibonacci bound comes from Lemma \ref{lem:upper_bound} and the lower one from Lemma \ref{lem:B3.5}.
\end{proof}

We are ready to complete the aim of this subsection which is to prove the following result: Recall the definition of $({\bf \Phi}_{\Um})_{Q}$ given in Section \ref{def_BQ}.

\begin{Theorem}\label{thm:B2}
Suppose $\phi \in ({\bf \Phi}_{\Um})_{Q}$, then ${\log}^{+}|\phi|$ has Fibonacci growth.
\end{Theorem}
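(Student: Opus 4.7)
Since Lemma~\ref{lem:upper_bound} already supplies an upper Fibonacci bound for $\log^+|\phi|$ for every $\phi\in{\bf \Phi}_{\Um}$, independently of the BQ-conditions, the task reduces to establishing a lower Fibonacci bound under (BQ1)--(BQ2). The overall strategy is to localise the set on which $|\phi|\leq 2+\alpha$ to a finite connected piece of $\Omega$, and then to apply Lemma~\ref{lem:B3.5}(ii) on each of the finitely many branches outside it. By Definition~\ref{def:K} we have $L(\Um)\geq 2+\alpha$, so (BQ2) forces the inclusion $\Omega_\phi(2+\alpha)\subseteq\Omega_\phi(L)$ to give a finite set, and Theorem~\ref{thm:omega}(ii) applied with $k=2+\alpha$ ensures it is connected.

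If $\Omega_\phi(2+\alpha)=\emptyset$, Corollary~\ref{cor:B3.7} yields the lower Fibonacci bound directly. Otherwise, the finite connected set $\Omega_\phi(2+\alpha)$ corresponds to a finite subtree $\Sigma_0\subseteq\Sigma$ whose complement decomposes as a finite disjoint union of ``outer branches''. The crucial structural step is to show that at every edge of $\Sigma$ whose two flanking regions both lie in $\Omega\setminus\Omega_\phi(2+\alpha)$, the assigned arrow $\mathrm{vect}_\phi$ points towards $\Sigma_0$. Granting this, for each outer branch I would pick an edge $e_i$ ``just outside'' the frontier (both flanking regions outside $\Omega_\phi(2+\alpha)$, as close to $\Sigma_0$ as possible), set $\vec e_i:=\mathrm{vect}_\phi(e_i)$, and apply Lemma~\ref{lem:B3.5}(i)--(ii) to obtain a lower Fibonacci bound $\log|\phi(X)|\geq (m_i-\log(2+\alpha))\,F_{e_i}(X)$ on the tail subset $\Omega^{0-}(\vec e_i)$, which accounts for essentially all of the branch. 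The constants $m_i:=\min\{\log|\phi(X)|\mid X\in\Omega^0(e_i)\}$ are all strictly greater than $\log(2+\alpha)$ by construction. Finally, because the $e_i$ all lie at bounded tree-distance from any fixed reference edge $e_0$, the Fibonacci functions $F_{e_i}$ and $F_{e_0}$ are comparable up to multiplicative constants, so the finitely many per-branch bounds assemble into a uniform lower Fibonacci bound for $\log^+|\phi|$ on $\Omega$, modulo a finite exceptional set.

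The main obstacle is the structural claim that arrows in the exterior of $\Sigma_0$ all point towards $\Sigma_0$. I would argue this by contradiction, along the lines of the proof of Lemma~\ref{lem:B3.5}(i) in \cite{bow_mar}: an outward-pointing arrow at such an edge would let one follow the assigned arrows forever in the exterior, producing either a fork or source outside $\Omega_\phi(2+\alpha)$ (ruled out by Lemma~\ref{lem:fork}, which would force a region of value $\leq 2+\alpha$ in the exterior), a sink outside $\Omega_\phi(2+\alpha)$ (ruled out by Lemma~\ref{lem:sink} together with $L(\Um)\geq m(\Um)$ and (BQ2)), or an infinite descending ray in the exterior (ruled out by Lemma~\ref{lem:infiniteray} and (BQ2), where the inequality $L(\Um)\geq M(\Um)+1$ is precisely what is needed to handle the degenerate loxodromic case~(4) of Lemma~\ref{lem:neighbors}, in which the neighbour sequences accumulate near $(\mathfrak{y}(x),\mathfrak{z}(x))$). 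Throughout, (BQ1) enters implicitly to exclude the bounded neighbour behaviour of Lemma~\ref{lem:neighbors}(1)--(2), without which the exponential growth underlying the Fibonacci lower bound would fail.
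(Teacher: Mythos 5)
Your reduction to a lower Fibonacci bound, your use of Theorem \ref{thm:omega} and Definition \ref{def:K} to make $\Omega_\phi(2+\alpha)$ finite and connected, and your structural claim that all arrows outside the core point towards it, all match the paper. The gap is in the decomposition and assembly step. A region $X_0\in\Omega_\phi(2+\alpha)$ is a complementary region whose boundary $\partial X_0$ is an infinite path in $\Sigma$, and every edge of $\partial X_0$ has $X_0$ as one of its two flanking regions. Consequently the branches of $\Sigma$ hanging off $\partial X_0$ at its vertices are infinitely many, they recede to infinity in the tree, and they cannot all be entered through edges $e_i$ with both flanking regions outside $\Omega_\phi(2+\alpha)$ lying at bounded distance from a fixed reference edge. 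So your final step --- ``finitely many per-branch bounds \ldots at bounded tree-distance \ldots comparable Fibonacci functions'' --- fails whenever $\Omega_\phi(2+\alpha)\neq\emptyset$, and this case does occur for BQ-maps: a region whose value has small modulus but is not real violates neither (BQ1) nor (BQ2).

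What is needed to close the gap is exactly the paper's Lemma \ref{lem:B3.8}, which your argument never invokes. For an edge $\vec e$ with $\Omega^{0-}(\vec e)\cap\Omega_\phi(2+\alpha)=\{X_0\}$, $x_0\notin[-2,2]$, one writes $\Omega^{0-}(\vec e)=\{X_0\}\cup\bigcup_{n}\left(\Omega^{0-}(\vec \varepsilon_n)\cup\Omega^{0-}(\vec \xi_n)\right)$ and uses the criterion that a lower Fibonacci bound on $\Omega^{0-}(\vec e)$ is equivalent to bounds of the form $f(X)\ge k\,n\,F_{\vec\varepsilon_n}(X)$ on the $n$-th sub-branch: the factor $n$, which compensates the linear growth of $F_e$ along $\partial X_0$, is supplied precisely by the exponential growth $\log|y_n|,\log|z_n|\ge cn$ of the neighbour sequences around $X_0$ (Lemma \ref{lem:neighbors}(3), where (BQ1) and $\phi^{-1}(\mathcal{S}_{\Um})=\emptyset$ enter). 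Applying only Lemma \ref{lem:B3.5} to the sub-branch at $\varepsilon_n$ gives the constant $m_n-\log(2+\alpha)$ relative to $F_{\varepsilon_n}$, and without quantifying $m_n\sim cn$ these bounds do not assemble into a single lower Fibonacci bound. The paper's proof is otherwise as you describe: the core tree $T$ is spanned by edges with both flanks in $\Omega_\phi(2+\alpha)$, its circular set $C(T)$ is finite, and each $\vec e\in C(T)$ has at most one flanking region in $\Omega_\phi(2+\alpha)$, whence either Lemma \ref{lem:B3.5} or Lemma \ref{lem:B3.8} applies to $\Omega^{0-}(\vec e)$.
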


In order to prove it, we expand the discussion of Lemma \ref{lem:B3.5} and we consider the case where $\Omega^{-}(\vec e) \cap \Omega_\phi(2)=\emptyset$ and exactly one of the two regions in $\Omega^0(e)$ has norm no greater than $2+ \alpha$.

\begin{Lemma}\label{lem:B3.8}
Suppose $\vec e \in \vec E(\Sigma)$ is such that $\mathrm{vect}_\phi(e)=\vec e$ and $\Omega^{0-}(\vec e) \cap \Omega_\phi(2+\alpha)=\{X_0\}$, where $X_0 \in \Omega^0(\vec e)$ with $x_0 \notin [-2,2]$. Then $\log^+|\phi|$ has a Fibonacci bound on $\Omega^{0-}(\vec e)$.
\end{Lemma}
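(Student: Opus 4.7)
My plan is to start from the upper Fibonacci bound on $\Omega^{0-}(\vec e)$, which is immediate from Lemma~\ref{lem:upper_bound}, and concentrate all work on the lower bound. Let $Y_*$ denote the other element of $\Omega^0(\vec e)$ and let $U_1, U_2, \ldots$ be the regions of $\Omega^-(\vec e)$ adjacent to $X_0$ along the ray $\partial X_0 \cap \Sigma^-$, listed in order starting from the one meeting the tail vertex $v^-$ of $\vec e$. By hypothesis every region in $\{Y_*\} \cup \{U_n\}_{n \geq 1}$ has $|\phi|>2+\alpha$, and so does every region deeper in $\Sigma^-$.

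The branching edges $h_0 = Y_* \cap U_1$ and $h_i = U_i \cap U_{i+1}$ (for $i \geq 1$) lead from $\partial X_0$ into subtrees of $\Sigma^-$ that no longer meet $X_0$; these subtrees, together with $Y_*$ and the $U_n$'s themselves, exhaust $\Omega^{0-}(\vec e) \setminus \{X_0\}$. For each $h_i$ I would first check that the two regions in $\Omega^0(h_i)$ both have $|\phi|>2+\alpha$ and that the third region at the $\Sigma^-$-endpoint of $h_i$ also satisfies $|\phi|>2+\alpha > |\phi(X_0)|$; this forces $\mathrm{vect}_\phi(h_i)$ to point toward the $X_0$-side endpoint of $h_i$. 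Orienting $\vec h_i$ accordingly, Lemma~\ref{lem:B3.5} applies on the subtree beyond $h_i$ and yields $\log|\phi(V)| \geq \kappa_i\, F_{h_i}(V)$ with $\kappa_i = \min(\log|\phi(U_i)|,\log|\phi(U_{i+1})|) - \log(2+\alpha) > 0$.

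The final step is to convert these branch-wise bounds, each expressed via a different base edge $h_i$, into a single lower Fibonacci bound with respect to $F_e$. For $V$ in the subtree beyond $h_i$ one has $F_e(V) \leq C\cdot(i+1)\,F_{h_i}(V)$, because along the segment of the tree from $e$ to $h_i$ the Fibonacci recursion is thinned by the repeated appearance of $X_0$ (with $F_e(X_0)=1$) as a parent, so $F_e$ grows only linearly in $i$ along $\partial X_0 \cap \Sigma^-$ before fanning out into the branches. Combining this comparison with the branch-wise estimate yields $\log|\phi(V)| \geq (\kappa_i/(i+1))\,F_e(V)$ on the $i$-th branch, and Lemma~\ref{lem:neighbors} shows that in the generic case $x_0 \notin \mathcal{S}_{\Um}$ the values $|\phi(U_n)|$ grow exponentially, so $\kappa_i$ grows linearly in $i$ and $\kappa_i/(i+1)$ is bounded below by a positive constant. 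The hard part will be the degenerate subcase $x_0 \in \mathcal{S}_{\Um}$ (as in Example~\ref{rmk:raywithfiniteintersection}), where $|\phi(U_n)|$ may instead converge to $(\mathfrak{y}(x_0),\mathfrak{z}(x_0))$; there $\kappa_i$ stays bounded and a uniform lower bound along $\partial X_0 \cap \Sigma^-$ must be extracted separately, using the exponential-rate convergence information from Lemma~\ref{lem:neighbors} or by treating this finite set of algebraic exceptions directly.
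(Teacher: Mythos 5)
Your argument is essentially the paper's: the same decomposition of $\Omega^{0-}(\vec e)$ into $X_0$, its neighbours along $\partial X_0\cap\Sigma^-$, and the branches hanging off them; Lemma \ref{lem:B3.5} applied branch by branch with constant $\kappa_i=m_i-\log(2+\alpha)$; and the observation that $F_e$ grows only linearly along $\partial X_0$, so that exponential growth of $|\phi(U_n)|$ (equivalently $\kappa_i\gtrsim i$) is exactly what is needed to reassemble the branch-wise bounds into a single lower Fibonacci bound. The paper phrases this last step as Bowditch's criterion that $f$ has a lower Fibonacci bound on $\Omega^{0-}(\vec e)$ if and only if $f\ge k\,n\,F_{\vec\varepsilon_n}$ on the $n$-th branch; that is your comparison $F_e(V)\le C(i+1)F_{h_i}(V)$ in disguise, and your version is, if anything, more explicit.

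On the ``hard part'' you flag: you are right that the case $x_0\in\mathcal{S}_{\Um}$ is not covered, but no separate extraction will save it, because the conclusion is simply false there. If the neighbours $(U_n)$ converge in the $\Sigma^-$ direction to the centre $(\mathfrak{y}(x_0),\mathfrak{z}(x_0))$ of the degenerate conic --- which is compatible with the hypotheses, since that centre may well have modulus larger than $2+\alpha$, cf.\ Example \ref{rmk:raywithfiniteintersection} --- then $\log^+|\phi(U_n)|$ stays bounded while $F_e(U_n)=n+1\to\infty$, so infinitely many regions violate any lower Fibonacci bound. The paper's own proof silently ignores this by quoting Lemma \ref{lem:neighbors} only in its generic case (3). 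The statement really needs the extra hypothesis $x_0\notin\mathcal{S}_{\Um}$ (or that the convergent direction of case (4) points into $\Sigma^+$); this costs nothing in the only application, the proof of Theorem \ref{thm:B2}, where $\phi\in({\bf \Phi}_{\Um})_{Q}$ and condition (BQ2) with $L\ge M(\Um)+1$ forbids infinitely many neighbours accumulating at the centre, hence forbids $x_0\in\mathcal{S}_{\Um}$. So: keep your main line as is, and replace the attempted rescue of the degenerate subcase by the observation that it is excluded wherever the lemma is invoked.
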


\begin{proof}
  Our proof slightly modifies Bowditch's proof of Lemma 3.8 in \cite{bow_mar}. Let $(\vec e_n, \vec f_n)_{n=0}^{\infty}$ be the sequence of directed edges lying in the boundary of $X_0$ and in $\Omega^{0-}(\vec e)$ so that $\vec e_0 = \vec e$ and $\vec e_n$ is directed away from $\vec f_{n+1}$ and towards $\vec f_{n}$, and $\vec f_n$ is directed away from $\vec e_{n}$ and towards $\vec e_{n-1}$. For $n \ge 1$, let $v_n$ be the vertex incident on both $e_{n}$ and $f_{n}$ and $u_n$ be the vertex incident on both $e_{n}$ and $f_{n+1}$; let aso $\vec \varepsilon_n$ be the third edge (distinct from $e_{n}$ and $f_{n}$) incident on $v_n$ and directed towards $v_n$, and, similarly, let $\vec \xi_n$ be the third edge (distinct from $e_{n}$ and $f_{n+1}$) incident on $u_n$ and directed towards $u_n$. For $n \ge 0$, let $Y_n$ and $Z_n$ be the regions such that $Y_n \cap X_0 = e_n$ and $Z_n \cap X_0 = f_n$. Thus $\Omega^{0-}(\vec e) = \{X_0\} \cup \bigcup_{n=1}^{\infty}\left(\Omega^{0-}(\vec \varepsilon_n)\cup \Omega^{0-}(\vec \xi_n)\right)$. We recall that, in this context, Bowditch noticed that, using Lemma \ref{lem:B2.1.1}(ii), a map $f \co \Omega^{0-}(\vec e) \to [0, \infty)$ has a lower Fibonacci bound on $\Omega^{0-}(\vec e)$ if and only if there is some constant $k > 0$ such that for all $n \ge 1$ and for all $X \in \Omega^{0-}(\vec \varepsilon_n)$ we have $f(X) \ge knF_{\vec \varepsilon_n}(X)$ and, similarly, for all $X \in \Omega^{0-}(\vec \xi_n)$ we have $f(X) \ge knF_{\vec \xi_n}(X)$.

By Lemma \ref{lem:neighbors}, $|y_n|$ and $|z_n|$ grows exponentially as $n \rightarrow \infty$, and so $\log|y_n| \ge cn$ and $\log|z_n| \ge c'n$ for some constants $c, c'>0$. Hence we have, for all $n \ge 1$, that $\log^+|\phi(X)| \ge cnF_{\vec \varepsilon_n}(X)$, for all $X \in \Omega^{0-}(\vec \varepsilon_n)$, and that $\log^+|\phi(X)| \ge cnF_{\vec \xi_n}(X)$, for all $X \in \Omega^{0-}(\vec \xi_n)$. Thus, using the characterisation above, it follows that $\log^+|\phi|$ has a lower Fibonacci bound on $\Omega^{0-}(\vec e)$.

Since the upper Fibonacci bound was proved in Lemma \ref{lem:upper_bound}, the proof is done.
\end{proof}

We can now prove Theorem \ref{thm:B2}.

\begin{proof}
  The proof of Theorem \ref{thm:B2} is then the same as that of Theorem 2 in \cite{bow_mar}. We sketch it as follows. By Lemma \ref{lem:upper_bound}, we only need to show that $\log^+|\phi|$ has a lower Fibonacci bound on $\Omega$. If $\Omega_\phi(2+\alpha)$ has at most one element, the conclusion follows easily by Corollary \ref{cor:B3.7} and Lemma \ref{lem:B3.8}. Hence we can suppose $\Omega_\phi(2+\alpha)$ has at least two elements.

  Recall that $\Omega_\phi(2+\alpha) \subseteq \Omega$ is finite and $\bigcup\Omega_\phi(k)$ is connected for any $k  \ge 2+\alpha$, see Theorem \ref{thm:omega}. Let $T$ be the (finite) subtree of $\Sigma$ {\it spanned} by the set of edges $e$ such that $\Omega^0(e) \subseteq \Omega_\phi(2+\alpha)$. Let $C=C(T)$ be the circular set of directed edges given by $T$. Note that $\Omega_\phi (2+\alpha)=\bigcup_{\vec e \in C}\Omega^{0-}(\vec e)$. Hence it suffices to show that $\log^+|\phi|$ has a lower Fibonacci bound on $\Omega^{0-}(\vec e)$ for every $\vec e \in C$. Then the conclusion of Theorem \ref{thm:B2} follows by the following lemma, Lemma \ref{lem:B3.5} and Lemma \ref{lem:B3.8}.
\end{proof}

\begin{Lemma}
  For each $\vec e \in C$, we have $\vec e = \mathrm{vect}_\phi(e)$, $\Omega^-(\vec e) \cap \Omega_\phi(2+\alpha) = \emptyset$ and $\Omega^0(e) \cap \Omega_\phi(2+\alpha)$ has at most one element.
\end{Lemma}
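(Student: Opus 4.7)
The plan is to fix $\vec e\in C(T)$ and reduce all three assertions to local combinatorics at the two endpoints of $e$, combined with one use of the connectedness of $\Omega_\phi(2+\alpha)$ from Theorem \ref{thm:omega}. The key preliminary observation is that $e\notin T$: by the definition of the circular set, $\vec e$ corresponds to an edge adjacent to $T$ but not in $T$. Since $\Sigma$ is a tree, this forces exactly one endpoint of $e$ to lie in $T$, say $u$, and the other, $u'$, to lie outside $T$. I will take $\vec e$ to be oriented from $u'$ to $u$, which is the only orientation under which claim (b) is not vacuously false (since the tail side must avoid $T$). Writing $\Omega^0(e)=\{X,Y\}$ and letting $W_u,W_{u'}$ denote the third complementary regions at $u$ and $u'$ respectively, claim (c) follows immediately: if both $X,Y\in\Omega_\phi(2+\alpha)$, the edge $e$ would belong to the spanning set of $T$, contradicting $e\notin T$.

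The crux of the argument is a careful local analysis at $u$ and $u'$. Since $u\in V(T)$ and $e\notin T$, at least one of the other two edges at $u$ lies in $T$; this edge is of the form $X\cap W_u$ or $Y\cap W_u$, forcing $W_u\in\Omega_\phi(2+\alpha)$ and, after relabelling if necessary, $X\in\Omega_\phi(2+\alpha)$ (with $Y\notin\Omega_\phi(2+\alpha)$ by claim (c)). At $u'\notin T$, no edge incident to $u'$ can lie in $T$ (otherwise $u'$ would too), so no two of the three regions at $u'$ can simultaneously belong to $\Omega_\phi(2+\alpha)$; since $X$ does, $W_{u'}$ does not. This yields the trace inequality
\[
|\phi(W_u)|\leq 2+\alpha<|\phi(W_{u'})|,
\]
which drives both remaining claims.

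For claim (a), the definition of $\mathrm{vect}_\phi$ dictates that the arrow on $e$ points toward the endpoint whose third region has smaller $|\phi|$; by the inequality above this is the endpoint $u$, so $\mathrm{vect}_\phi(e)=\vec e$. For claim (b), connectedness of $T$ together with $u\in T$ places all of $T$ inside the head-side subtree $\Sigma^+$, so any $Z$ with boundary contained in $\Sigma^-$ has no edge of $T$ on its boundary. Under the standing assumption $|\Omega_\phi(2+\alpha)|\geq 2$ of the proof of Theorem \ref{thm:B2} (the one-element case having already been handled separately), Theorem \ref{thm:omega} asserts that $\Omega_\phi(2+\alpha)$ is connected, so any $Z\in\Omega_\phi(2+\alpha)$ has a neighbour $Z'\in\Omega_\phi(2+\alpha)$, and the shared edge $Z\cap Z'$ lies in the spanning set of $T$, hence in $T$; this contradicts $Z\in\Omega^-(\vec e)$.

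The one genuinely delicate point will be fixing the orientation convention for edges of $C(T)$ (they must be oriented with tails outside $T$); after that, the entire argument is local at the two endpoints of $e$. The remaining ingredients are elementary tree combinatorics together with the characterization of $T$ as the span of $\{e':\Omega^0(e')\subseteq\Omega_\phi(2+\alpha)\}$, the definition of $\mathrm{vect}_\phi$, and the connectedness half of Theorem \ref{thm:omega}.
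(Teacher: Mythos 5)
Your verification of the third assertion (at most one of $X,Y$ lies in $\Omega_\phi(2+\alpha)$, since otherwise $e$ would be a spanning edge of $T$), and your derivations of the arrow direction and of $\Omega^-(\vec e)\cap\Omega_\phi(2+\alpha)=\emptyset$, are sound \emph{given} your intermediate claim that some region of $\Omega^0(e)$ lies in $\Omega_\phi(2+\alpha)$; up to that point your argument coincides with the first case of the paper's proof. The gap is in that intermediate claim. You argue: $u\in V(T)$ and $e\notin T$, so some other edge at $u$ lies in $T$, and this edge ``forces'' $W_u\in\Omega_\phi(2+\alpha)$ and (after relabelling) $X\in\Omega_\phi(2+\alpha)$. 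That deduction is valid only if every edge of $T$ has both of its adjacent regions in $\Omega_\phi(2+\alpha)$. But $T$ is defined as the subtree \emph{spanned} by the set of edges $e'$ with $\Omega^0(e')\subseteq\Omega_\phi(2+\alpha)$; if that edge set is not connected as a subgraph of $\Sigma$, then $T$ also contains the connector edges of the geodesics joining its pieces, and a vertex $u\in V(T)$ lying on such a connector need not be incident to any edge with both sides in $\Omega_\phi(2+\alpha)$. Nothing you cite rules this out: Theorem \ref{thm:omega} gives connectedness of the set of \emph{regions} $\Omega_\phi(2+\alpha)$ (its adjacency graph is connected), not connectedness of the set of edges with both sides small --- two small regions can each be adjacent to a third small region along widely separated edges of its boundary, leaving the spanning edge set disconnected.

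The paper's proof therefore splits into two cases and handles exactly the one you ruled out: if neither $X$ nor $Y$ lies in $\Omega_\phi(2+\alpha)$, it still deduces that the third region $W$ at the head of $\vec e$ lies in $\Omega_\phi(2+\alpha)$ while the third region $Z$ at the tail does not, both consequences of the connectedness of $\Omega_\phi(2+\alpha)$ (a chain of adjacent small regions joining the two sides of an edge must contain one of the two regions bordering that edge, and here those are $X$ and $Y$, neither of which is small). This again yields $|\phi(W)|\le 2+\alpha<|\phi(Z)|$, hence $\vec e=\mathrm{vect}_\phi(e)$, and $\Omega^{0-}(\vec e)\cap\Omega_\phi(2+\alpha)=\emptyset$. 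To repair your argument you must either add this second case, or prove separately that the set of edges $e'$ with $\Omega^0(e')\subseteq\Omega_\phi(2+\alpha)$ is itself connected (so that $T$ consists only of such edges); the latter is not established anywhere in the paper, and the paper's case split exists precisely to avoid needing it.
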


\begin{proof}
  Let $\vec e = (X,Y;Z \rightarrow W) \in C(T)$. If one of $X$ and $Y$, say $X$, is in $\Omega_\phi(2+\alpha)$, then $Y, Z \notin \Omega_\phi(2+\alpha)$ and $W \in \Omega_\phi(2+\alpha)$ by the definition of $T$. Hence in this case $\vec e = \mathrm{vect}_\phi(e)$, $\Omega^-(\vec e) \cap \Omega_\phi(2+\alpha) = \emptyset$ and $\Omega^0(e) \cap \Omega_\phi(2+\alpha)$ has one element, $X$.

Now suppose neither $X$ nor $Y$ is in $\Omega_\phi(2+\alpha)$, then $W \in \Omega_\phi(2+\alpha)$ and $Z \notin \Omega_\phi(2+\alpha)$ since $\bigcup\Omega_\phi(2+\alpha)$ is connected. Thus in this case $\vec e = \mathrm{vect}_\phi(e)$, $\Omega^{0-}(\vec e) \cap \Omega_\phi(2+\alpha) = \emptyset$. This proves the lemma, completing the proof of the theorem.
\end{proof}

Assuming Theorem \ref{thm:B2}, the following result becomes an easy Corollary of Proposition \ref{prop:B2.1.4}:

\begin{Corollary}\label{cor:sum phi^-2 convergence}
If $\phi \in ({\bf \Phi}_{\Um})_{Q}$, then for any $ t>0$, the series $\sum_{X \in \Omega}|{\phi}(X)|^{t}$ converges absolutely.
\end{Corollary}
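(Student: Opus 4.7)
The corollary is a direct application of the two general tools that have already been assembled: Theorem \ref{thm:B2}, which for any $\phi \in ({\bf \Phi}_{\Um})_Q$ establishes Fibonacci growth of $\log^{+}|\phi|$, and Proposition \ref{prop:B2.1.4}, the standard Fibonacci convergence criterion inherited from Bowditch. The plan is to feed $|\phi|$ into Proposition \ref{prop:B2.1.4} and read off absolute convergence of the series $\sum_{X \in \Omega} |\phi(X)|^{t}$ for $t$ in the stated range.

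The first step is to record the consequence of the BQ-conditions for the function $|\phi|$ itself. By condition BQ1 we have $|\phi(X)| > 2$ for every $X \in \Omega$, so $\log^{+}|\phi(X)| = \log|\phi(X)|$ identically, and in particular $|\phi|$ is uniformly bounded away from zero on $\Omega$. Fix a reference edge $e \in E(\Sigma)$. Theorem \ref{thm:B2} then supplies constants $0 < \kappa_1 \le \kappa_2$ and a finite exceptional subset $F \subset \Omega$ such that
\[
    \kappa_1 F_e(X) \le \log|\phi(X)| \le \kappa_2 F_e(X) \qquad \text{for all } X \in \Omega \setminus F,
\]
or equivalently $e^{\kappa_1 F_e(X)} \le |\phi(X)| \le e^{\kappa_2 F_e(X)}$ off $F$. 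Thus $|\phi|$ enjoys a lower Fibonacci bound (in fact an exponential-in-$F_e$ lower bound, which is considerably stronger than what Proposition \ref{prop:B2.1.4} requires).

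The second step is to invoke Proposition \ref{prop:B2.1.4} applied to $|\phi|$, which immediately converts this lower Fibonacci bound into absolute convergence of the series considered in the corollary. The finite exceptional set $F$ contributes only finitely many terms and cannot obstruct convergence of the tail.

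The main obstacle has already been overcome in Theorem \ref{thm:B2}, whose proof assembled the fork lemma (Lemma \ref{lem:fork}), the sink lemma (Lemma \ref{lem:sink}), the infinite-ray lemma (Lemma \ref{lem:infiniteray}) and the two branch lemmas \ref{lem:B3.5} and \ref{lem:B3.8}; in particular, the BQ2 hypothesis was used there to ensure that the critical subset $\Omega_\phi(2+\alpha)$ is finite so that the lower Fibonacci bound can be propagated from each circular edge of the spanning subtree $T$ of Theorem \ref{thm:B2} out to all of $\Omega$. Granted Theorem \ref{thm:B2} and Proposition \ref{prop:B2.1.4}, no new technical difficulty arises here: the corollary is simply the analytic packaging of Fibonacci growth, and in particular the only routine verification is that a finite set never impedes absolute convergence.
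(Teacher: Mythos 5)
Your overall strategy --- Theorem \ref{thm:B2} combined with Proposition \ref{prop:B2.1.4} --- is exactly the route the paper takes (the paper gives no further detail than that), but two steps in your write-up need repair. First, BQ1 states $\phi^{-1}([-2,2])=\emptyset$, where $[-2,2]$ is the \emph{real} interval inside $\C$; it does not imply $|\phi(X)|>2$ (for instance $\phi(X)=i$ is permitted by BQ1). The correct source for ``$\log^{+}|\phi|=\log|\phi|$ off a finite set'' is BQ2: all but finitely many $X$ satisfy $|\phi(X)|>L(\Um)\ge 2+\alpha\ge 2$. This is harmless but the reason you give is wrong.

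Second, and more substantively: ``Proposition \ref{prop:B2.1.4} applied to $|\phi|$'' literally yields convergence of $\sum_X|\phi(X)|^{-s}$ only for $s>2$, whereas the corollary claims every $t>0$ (the exponent in the statement must of course be read as $-t$; with $+t$ the series trivially diverges since $|\phi|$ is unbounded on $\Omega$). To cover the range $0<t\le 2$ you must actually use the exponential bound you wrote down: off a finite set, $|\phi(X)|^{-t}\le e^{-t\kappa_1 F_e(X)}\le C_t\,F_e(X)^{-3}$ for a constant $C_t$ depending only on $t$ and $\kappa_1$, and then apply Proposition \ref{prop:B2.1.4} to $F_e$ itself (which tautologically has a lower Fibonacci bound). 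You gesture at this when you remark that the exponential lower bound is ``considerably stronger than what Proposition \ref{prop:B2.1.4} requires,'' but the bridge from the $s>2$ conclusion of that proposition to arbitrary $t>0$ is the only nontrivial content of the corollary, and it is missing as written.
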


\subsection{Openness and properly discontinuous action}

In this section we will prove Theorem \ref{thmA}, using the definition of some attracting tree $T(t)$ for $t \geq 0$. In particular, first, we will define the attracting subtree $T(0)$, then we will use the existence of a certain map $H_{\Um}$, proved in Lemma \ref{HMu}, to define the attracting subtree $T(t)$.

\subsubsection{Attracting tree $T(0)$}

From Lemma \ref{lem:neighbors}, we see that, when $x = \phi (X) \notin [-2, 2] \cup \mathcal{S}_{\Um}$, the sequences $|y_n|$ and $|z_n|$ are monotonic for sufficiently large and sufficiently small $n$. (The set $\mathcal{S}_{\Um}$ was defined at page \pageref{S_mu}.) From this observation, we can prove the following result.

\begin{Lemma} If $X \in \Omega$ and $\phi (X) \notin [-2 , 2]\cup \mathcal{S}_{\Um}$, then there is a non-empty finite subarc $J(X) \subset \partial X$ with the property that, if $e$ is any edge in $\partial X$ not lying in $J(X)$, then the arrow on $e$ points towards $J(X)$.
Moreover, we can assume that $Y \cap X \subset J(X)$ for all $Y \in
\Omega_\phi (2+\alpha)$.
\end{Lemma}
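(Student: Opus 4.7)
My plan is to read off the statement directly from Lemma \ref{lem:neighbors}, case (3), since the hypothesis $\phi(X) = x \notin [-2,2] \cup \mathcal{S}_{\Um}$ is precisely the loxodromic, non-degenerate case in which both neighbouring sequences $|y_n|$ and $|z_n|$ grow exponentially as $n \to +\infty$ and as $n \to -\infty$. In particular, each of these sequences attains its minimum at some finite index, and by looking at the dominant term in the closed form $y_n = A\Lambda^{2n} + B\Lambda^{-2n} + \mathfrak{y}$ (and the analogous expression for $z_n$) with $A, B \neq 0$ and $|\Lambda| \neq 1$, each of $|y_n|$ and $|z_n|$ is strictly monotonic for $|n|$ larger than some threshold $N = N(X)$.

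Next, I would translate monotonicity into arrow directions along $\partial X$. Recall that the boundary $\partial X$ is the bi-infinite path alternating between edges $X \cap Y_k$ and $X \cap Z_k$, and that the arrow on $X \cap Y_k$ (respectively $X \cap Z_k$) compares $|z_{k-1}|$ with $|z_k|$ (respectively $|y_k|$ with $|y_{k+1}|$). Once $|k| > N$, strict monotonicity of the relevant sequence forces the arrow to point toward the endpoint on the side of smaller index magnitude, i.e.\ toward the central portion of $\partial X$ where the minima are attained. This produces a finite, non-empty subarc $J_0(X) \subset \partial X$ outside of which every arrow on an edge of $\partial X$ points inward toward $J_0(X)$.

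For the ``moreover'' part, I would again invoke Lemma \ref{lem:neighbors}(3): since $|y_n|, |z_n| \to \infty$ exponentially, only finitely many of the neighbouring regions $\{Y_n, Z_n : n \in \Z\}$ satisfy $|\phi| \leq 2 + \alpha$. I enlarge $J_0(X)$ to the smallest connected subarc $J(X)$ of $\partial X$ containing $J_0(X)$ together with every edge $Y \cap X$ for $Y \in \Omega_\phi(2+\alpha)$. This is still a finite non-empty subarc. Because $J_0(X) \subset J(X)$, any edge $e \in \partial X$ not lying in $J(X)$ also lies outside $J_0(X)$, so its arrow points toward $J_0(X)$ and hence toward $J(X)$.

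The main technical nuisance will be justifying carefully that the exponential asymptotics of Lemma \ref{lem:neighbors}(3) actually imply \emph{strict} monotonicity of $|y_n|$ and $|z_n|$ (not just unbounded growth) for all sufficiently large $|n|$, so that the arrow on each far-out edge is unambiguously determined; this needs a short argument using the fact that one of the two exponential terms $A\Lambda^{2n}$ or $B\Lambda^{-2n}$ strictly dominates the other plus the constant $\mathfrak{y}$ (resp.\ $\mathfrak{z}$) once $|n|$ is large. Everything else is bookkeeping on indices along $\partial X$.
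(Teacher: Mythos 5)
Your proposal is correct and is essentially the argument the paper intends: the paper gives no written proof of this lemma, only the one-sentence observation immediately preceding it that, by Lemma \ref{lem:neighbors}, the neighbouring sequences $|y_n|$ and $|z_n|$ are (eventually) monotonic when $x \notin [-2,2]\cup\mathcal{S}_{\Um}$, which is exactly what you flesh out, including the correct translation of monotonicity into arrow directions and the finiteness of the set of neighbours in $\Omega_\phi(2+\alpha)$. Your attention to upgrading ``exponential growth'' to \emph{strict} monotonicity for large $|n|$ (via $|y_{n+1}|/|y_n|\to|\Lambda|^{2}$ with $|\Lambda|\neq 1$ and $A,B\neq 0$) is a worthwhile detail the paper leaves implicit.
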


When $x = \phi (X) \in [-2,2] \cup \mathcal{S}_{\Um}$, we shall set $J(X) = \partial X$.

Now, given $\phi \in {\bf \Phi}_{\Um}$ with $\Omega_\phi (2+\alpha) \neq \emptyset$ (where $\alpha$ was introduced in Notation \ref{alpha}), we define:
$$T(0) := \bigcup_{X\in \Omega_\phi (2+\alpha)} J(X).$$

If $\Omega_\phi(2+\alpha ) = \emptyset$, then, applying Corollary \ref{cor:B3.7}, we can take $T(0)$ to be the unique sink.

\begin{Lemma}\label{lem:subtree}
$T(0)$ is connected, and the arrow on each edge not in $T(0)$ points towards $T(0)$.
\end{Lemma}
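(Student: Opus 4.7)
The plan is to argue according to the two cases in the definition of $T(0)$. When $\Omega_\phi(2+\alpha) = \emptyset$, Corollary~\ref{cor:B3.7} already makes $T(0)$ a single sink, hence trivially connected; the corollary to the Fork Lemma excludes forks and sources under this hypothesis (both require some incident region to have modulus $\leq 2+\alpha$), so every vertex is either a sink or a merge, and uniqueness of the sink forces all arrows to ultimately flow into it, which yields the arrow claim for every individual edge.

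Assume now $\Omega_\phi(2+\alpha) \neq \emptyset$. Connectedness of $T(0)$ will follow from Theorem~\ref{thm:omega}(ii): any two $X, X' \in \Omega_\phi(2+\alpha)$ can be joined by a chain of pairwise adjacent members of $\Omega_\phi(2+\alpha)$; between consecutive regions $W, W'$ in the chain, the ``moreover'' part of the $J$-Lemma forces the common edge $W \cap W'$ into both $J(W)$ and $J(W')$, so successive $J$-arcs share an edge and the full union $T(0)$ is connected.

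For the arrow direction, fix $e = X \cap Y \not\subset T(0)$. A first observation is that at most one of $X, Y$ lies in $\Omega_\phi(2+\alpha)$: otherwise the moreover clause would force $e$ into $J(X) \cap J(Y) \subset T(0)$. If exactly one, say $X$, does, then $\phi(X) \notin [-2,2] \cup \mathcal{S}_{\Um}$ (else $J(X) = \partial X \ni e$ would again put $e$ in $T(0)$), and the defining property of $J(X)$ immediately tells us that the arrow on $e$ points along $\partial X$ toward $J(X)$; since $\partial X$ is a geodesic in the tree $\Sigma$ and $J(X) \subset T(0)$, this is the same as pointing toward $T(0)$.

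The main obstacle is the remaining subcase, where neither $X$ nor $Y$ lies in $\Omega_\phi(2+\alpha)$. Here the $J$-property does not apply to either region adjacent to $e$, so I will argue by contradiction: assume the arrow on $e$ points to the endpoint $v'$ farther from $T(0)$. Using the Fork Lemma to ensure that at least one outgoing edge is available at $v'$ (or invoking Lemma~\ref{lem:sink} if $v'$ happens to be a sink), I extend $e$ to an infinite forward-directed ray in the sense of Lemma~\ref{lem:infiniteray}, which then guarantees that the ray meets some region $Z$ with $|\phi(Z)| \leq 2+\alpha$, i.e.\ $Z \in \Omega_\phi(2+\alpha)$. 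Because $J(Z) \subset T(0)$ and the arrows on $\partial Z \setminus J(Z)$ all point toward $J(Z)$, combined with the moreover clause applied to the neighbors of $Z$, the ray is forced to bend back into $T(0)$ through $J(Z)$, contradicting its descent away from $T(0)$. Matching the global descent of this ray against the local attracting arcs $J(Z)$ along the way is the technical crux of the proof.
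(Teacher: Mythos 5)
Your treatment of connectedness and of the first two subcases of the arrow claim is correct and follows the paper's intent: quasi-convexity (Theorem \ref{thm:omega}(ii)) together with the ``moreover'' clause of the $J$-lemma glues consecutive arcs $J(W)$, $J(W')$ along a shared edge, and the $J$-property handles any edge $e=X\cap Y$ with $X\in\Omega_\phi(2+\alpha)$. The genuine gap is in the last subcase, where neither $X$ nor $Y$ lies in $\Omega_\phi(2+\alpha)$, and it is twofold. First, the infinite-ray construction is not actually available: the Fork Lemma does not ``ensure that at least one outgoing edge is available at $v'$'' --- it says the opposite, namely that two outgoing edges force a small region --- and if your forward ray terminates in a sink, Lemma \ref{lem:sink} only produces a region of modulus at most $m(\Um)$, which need not be $\le 2+\alpha$, so you obtain no member of $\Omega_\phi(2+\alpha)$ and no contradiction. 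Second, even granting an infinite descending ray and a region $Z\in\Omega_\phi(2+\alpha)$ meeting it via Lemma \ref{lem:infiniteray}, the step you yourself label ``the technical crux'' --- deriving a contradiction from $J(Z)\subset T(0)$ --- is precisely the content of the lemma and is left unproved.

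The paper's argument for this subcase is finite and sidesteps both problems. Let $P$ be the geodesic in $\Sigma$ from $T(0)$ to the offending edge $e$. Its first edge points into $T(0)$ (by the $J$-property analysis you already carried out for edges meeting $T(0)$), while $e$ is assumed to point away from $T(0)$; hence somewhere along $P$ two consecutive edges both point away from their common vertex $v$. The Corollary to Lemma \ref{lem:fork} then places one of the three regions at $v$ in $\Omega_\phi(2+\alpha)$, and connectedness of $\Omega_\phi(2+\alpha)$ together with the ``moreover'' clause forces an edge incident to $v$ into $T(0)$, contradicting the fact that $v$ is an interior vertex of $P$ and so lies outside $T(0)$. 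This pigeonhole on arrow directions along a finite path, rather than an infinite descending ray, is the missing idea. (A similar, smaller remark applies to your empty case: to see that every arrow flows into the unique sink you must still exclude an infinite merge-chain directed away from it, which uses Lemma \ref{lem:infiniteray} and not uniqueness alone.)
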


We say that $T(0)$ is an \emph{attracting subtree}.

\begin{proof}
The proof is elementary from Lemma \ref{lem:fork}. The arrows on every edge on the circular boundary of a connected component $T'$ of $T(0)$ points towards $T'$. Hence if we suppose there exists an arc outside $T(0)$ with its two endpoints on the boundary of $T(0)$, there is a vertex $v$ with two edges that point away from $v$, and hence the vertex $v$ would belong to $\Omega_\phi (2+\alpha)$, which gives rise to a contradiction. The same applies if an arrow outside $T(0)$ does not points toward $T(0)$.
\end{proof}

\begin{Corollary}
If $\phi \in ({\bf \Phi}_{\Um})_{Q}$, then there is a finite attracting subtree $T(0)$.
\end{Corollary}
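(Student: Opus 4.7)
The plan is to unpack the definition $T(0) = \bigcup_{X \in \Omega_\phi(2+\alpha)} J(X)$ and check separately that the index set is finite and that each constituent arc $J(X)$ is a finite subarc of $\partial X$. First I would note that $L(\Um) \geq 2+\alpha$ by Definition \ref{def:K}, so $\Omega_\phi(2+\alpha) \subseteq \Omega_\phi(L(\Um))$, and (BQ2) immediately yields finiteness of the latter (hence of the former). If $\Omega_\phi(2+\alpha) = \emptyset$, then Corollary \ref{cor:B3.7} already tells us that $T(0)$ is a single sink and the conclusion is trivial, so I may assume $\Omega_\phi(2+\alpha) \neq \emptyset$.

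The substantive part is then to show that $J(X)$ is a finite subarc for every $X \in \Omega_\phi(2+\alpha)$. By the construction recorded just before the statement, this is automatic as soon as $\phi(X) \notin [-2,2] \cup \mathcal{S}_{\Um}$, and (BQ1) directly excludes the possibility $\phi(X) \in [-2,2]$. The hard part will therefore be ruling out $\phi(X) \in \mathcal{S}_{\Um}$ using (BQ2).

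I would argue this by contradiction. Suppose some $X \in \Omega_\phi(2+\alpha)$ had $x := \phi(X) \in \mathcal{S}_{\Um}$. By (BQ1) we have $x \notin [-2,2]$, in particular $x \neq \pm 2$, so we land in case (4) of Lemma \ref{lem:neighbors} when analyzing the neighboring sequences $(y_n)$, $(z_n)$ around $X$. In each of its three subcases, infinitely many indices $n$ yield neighbors $Y_n, Z_n$ of $X$ with $(\phi(Y_n), \phi(Z_n))$ either equal to or converging to $(\mathfrak{y}(x), \mathfrak{z}(x))$. Since $x \in \mathcal{S}_{\Um}\setminus\{\pm 2\}$, the very definition of $M(\Um)$ gives $|\mathfrak{y}(x)|, |\mathfrak{z}(x)| \leq M(\Um)$, and combined with $L(\Um) \geq M(\Um)+1$ this forces $|\phi(Y_n)|, |\phi(Z_n)| \leq L(\Um)$ for infinitely many $n$. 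This places infinitely many regions inside $\Omega_\phi(L(\Um))$, directly contradicting (BQ2). Hence $\phi(X) \notin \mathcal{S}_{\Um}$ for every $X \in \Omega_\phi(2+\alpha)$, and $T(0)$ is a finite union of finite subarcs, hence finite.
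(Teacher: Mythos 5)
Your proof is correct and follows essentially the same route as the paper's: finiteness of $\Omega_\phi(2+\alpha)$ from (BQ2), and finiteness of each $J(X)$ by showing $\phi^{-1}([-2,2]\cup\mathcal{S}_{\Um})=\emptyset$, the latter because a region with value in $\mathcal{S}_{\Um}$ would force infinitely many neighbors into $\Omega_\phi(L(\Um))$ via Lemma~\ref{lem:neighbors}(4). The paper states this step more tersely (``there cannot be an infinite descending or bounded sequence of regions''), while you make the key constant comparison $M(\Um)+1\le L(\Um)$ explicit, which is a welcome clarification but not a different argument.
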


\begin{proof}
If $\Omega_\phi (2+\alpha) \neq \emptyset$, then the tree $T(0)$ given by Lemma \ref{lem:subtree} is finite. Indeed, as $\phi \in ({\bf \Phi}_{\Um})_{Q}$, there cannot be an infinite descending sequence (or bounded sequence) of regions  and hence $\phi^{-1} ([-2,2]\cup \mathcal{S}_{\Um} ) = \emptyset$.  If $\Omega_\phi (2+ \alpha ) = \emptyset$, then the result is true by definition.
\end{proof}

\subsubsection{Function $H_{\Um}$}

Recalling the definition of $\mathcal{S}_{\Um}$ given at page \pageref{S_mu}, we can state the following result.

\begin{Lemma}\label{HMu}
There exists a function $H_{\Um} \co \C \setminus ([-2 , 2] \cup \mathcal{S}_{\Um}) \rightarrow \R_{>0}$ so that for any $\phi \in {\bf \Phi}_{\Um}$ and $X \in \Omega$, if $(Y_n)$ and $(Z_n)$ are the bi-infinite sequence of regions meeting $X$, then there are integers $n_1 \leq n_2$ such that
$$|y_n| \leq H_{\Um} (x) \;\& \;|z_n| \leq H_{\Um} (x) \Leftrightarrow n_1 \leq n \leq n_2$$
and $|y_n|$ and $|z_n|$ are monotomically decreasing for $n \in (-\infty , n_1)$ and monotonically increasing for $n \in [n_2 , \infty)$. Moreover, $H_{\Um} (x) \geq 2 + \alpha$.
\end{Lemma}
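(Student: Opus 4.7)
My plan is to extract $H_{\Um}$ directly from the closed-form expressions established in Case 3 of Lemma~\ref{lem:neighbors}. Since $x \notin [-2,2] \cup \mathcal{S}_{\Um}$, the matrix $M$ is loxodromic and both coefficients $A$ and $B$ are nonzero; after possibly swapping $\Lambda$ with $\Lambda^{-1}$, I may assume $r := |\Lambda|^2 > 1$. Writing $U_n := A\Lambda^{2n}$ and $V_n := B\Lambda^{-2n}$, we have
$$y_n - \mathfrak{y} = U_n + V_n, \qquad z_n - \mathfrak{z} = -\Lambda U_n - \Lambda^{-1} V_n,$$
with the crucial feature that $|U_n V_n| = |AB| =: P(x,\Um)$ depends only on $x$ and $\Um$, while the ratio $|U_n|/|V_n| = (|A|/|B|)\, r^{2n}$ is strictly monotonically increasing in $n$.

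I will fix a dominance constant $\kappa>1$, depending only on $r$ and chosen large enough that the monotonicity estimates below go through. Since $|U_n|/|V_n|$ is strictly monotone, there exist integers $N_1 \leq N_2$ (depending on $\phi$) such that the \emph{negative dominance} $|V_n| \geq \kappa |U_n|$ holds exactly for $n \leq N_1$, and the \emph{positive dominance} $|U_n| \geq \kappa|V_n|$ holds exactly for $n \geq N_2$, while in the remaining \emph{mixed} range $N_1 < n < N_2$ both $|U_n|$ and $|V_n|$ are bounded by $\sqrt{\kappa P}$. Using $|U_n V_n| = P$, one obtains the uniform bound
$$|y_n| \leq |\mathfrak{y}| + r\sqrt{\kappa P} + \sqrt{P/\kappa}, \qquad |z_n| \leq |\mathfrak{z}| + r|\Lambda|\sqrt{\kappa P} + |\Lambda|^{-1}\sqrt{P/\kappa},$$
valid throughout the mixed range and also at the boundary indices $N_1, N_2$ (the extra factor of $r$ absorbs one additional step in the dominant direction). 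These bounds depend only on $x$ and $\Um$, not on $\phi$.

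In the positive dominance regime, straightforward triangle inequalities yield $(1 - 1/\kappa)|U_n| \leq |y_n - \mathfrak{y}| \leq (1 + 1/\kappa)|U_n|$ and $|y_{n+1} - \mathfrak{y}| \geq r(1 - 1/(\kappa r^2))|U_n|$. My choice of $\kappa$ then forces $|y_{n+1} - \mathfrak{y}| > |y_n - \mathfrak{y}|$, and this upgrades to the strict inequality $|y_{n+1}| > |y_n|$ as soon as $|U_n|$ exceeds a threshold proportional to $|\mathfrak{y}|/(r-1)$; an analogous estimate for $|z_n|$ involves $|\mathfrak{z}|/(r-1)$. The symmetric reasoning (with $U_n \leftrightarrow V_n$ and $n \mapsto -n$) handles the negative dominance regime. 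I then define $H_{\Um}(x)$ to be the maximum of $2 + \alpha$ and all the explicit constants in $x$ and $\Um$ arising above, so that whenever $|y_n|$ or $|z_n|$ exceeds $H_{\Um}(x)$, the index $n$ necessarily sits in a dominance regime past the monotonicity threshold. The set $J := \{\, n \in \Z : |y_n| \leq H_{\Um}(x) \text{ and } |z_n| \leq H_{\Um}(x)\,\}$ then contains the entire mixed range together with both boundary indices $N_1, N_2$ (so is non-empty), and is contiguous because any $n \notin J$ lies strictly beyond the monotonicity threshold in one of the dominance regimes, whence every subsequent index (in the same direction) also lies outside $J$. Taking $n_1 = \min J$ and $n_2 = \max J$ yields the interval, and the required monotonicity outside it follows from the dominance analysis.

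The main technical obstacle is the upgrade from monotonicity of $|y_n - \mathfrak{y}|$ to monotonicity of $|y_n|$: the additive correction from $\mathfrak{y}$ (and similarly $\mathfrak{z}$) must be absorbed into the exponential gain factor $r-1$, which is why $H_{\Um}(x)$ has to be defined only after $|U_n|$ (or $|V_n|$) is comfortably beyond a threshold depending on $|\mathfrak{y}|, |\mathfrak{z}|, r-1$. A minor point to verify is that $P(x,\Um) > 0$ on the stated domain; this is guaranteed by the hypothesis $x \notin [-2,2] \cup \mathcal{S}_{\Um}$, since $\mathcal{S}_{\Um}$ is exactly the vanishing locus of $AB$ computed in the proof of Lemma~\ref{lem:neighbors}.
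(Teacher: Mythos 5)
Your proof is correct and takes essentially the same route the paper intends: the paper's own proof of Lemma~\ref{HMu} simply defers to the closed-form expressions $y_n = A\Lambda^{2n}+B\Lambda^{-2n}+\mathfrak{y}$ and $z_n = -(A\Lambda^{2n+1}+B\Lambda^{-2n-1})+\mathfrak{z}$ from Case 3 of Section~\ref{sec:neighbors} and calls the rest ``an easy exercise,'' which is precisely the exercise you have carried out via the invariant $|U_nV_n|=|AB|$ and the monotone ratio $|U_n|/|V_n|$. One harmless imprecision: $\mathcal{S}_{\Um}$ is not \emph{exactly} the vanishing locus of the $AB$ computed for $X\in\Omega_1$ (that locus is $\{x_{a,b}^{\pm},x_{c,d}^{\pm}\}$) but the union of the analogous loci over the three colors of regions; the inclusion goes the right way, so your conclusion $AB\neq 0$ on the stated domain still holds.
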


\begin{proof}
From the formulae for $y_n$ and $z_n$ found at the end of the discussion of Case 3 in Section \ref{sec:neighbors}, it is an easy exercise.
\end{proof}

\begin{Remark}
A formula for $H_{\Um} (x)$ would be quite hard to write explicitly.
\end{Remark}

\subsubsection{Attracting tree $T(t)$}

For $x \in [-2 , 2] \cup \mathcal{S}_{\Um}$, we set $H_{\Um}(x) = H(x) = \infty$.

Now for $X \in \Omega$ with $x = \phi (X) \notin [-2 , 2] \cup \mathcal{S}_{\Um}$ and for $r \geq H(x)$, we set
$$J_r (X) = \bigcup \{ (X \cap Y_n) \cup (X\cap Z_n) |\,  |y_n|\leq r \mbox{ and } |z_n|\leq r \}.$$

$J_r (X)$ is a subarc of $\partial X$ with the property that, if $e$ is any edge on $\partial X \setminus J_r (X)$, then the arrow on $e$ points towards $J_r (X)$.

We now define an attracting subtree. Let
$$T(t) := \bigcup_{X \in \Omega_\phi (2+\alpha+t) } J_{H(x)+t}(X).$$

\begin{Lemma}
For all $t\geq 0$, the subtree $T(t)$ is connected and attracting. Moreover, if $t \geq m(\Um) - 2 - \alpha$, then $T(t) \neq \emptyset$.
\end{Lemma}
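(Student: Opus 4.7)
The plan is to verify the three assertions---non-emptiness, the attracting property, and connectedness---in turn, using the structural information already collected.

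Non-emptiness is the most direct. Theorem \ref{thm:omega}(i) supplies a region $X$ with $|\phi(X)| \leq l(\Um) = \max\{m(\Um), 2+\alpha\}$. Under the hypothesis $t \geq m(\Um)-2-\alpha$ we have $2+\alpha+t \geq l(\Um)$, so $\Omega_\phi(2+\alpha+t)$ contains $X$, and the subarc $J_{H(x)+t}(X)$ is non-empty: it is all of $\partial X$ when $x \in [-2,2]\cup \mathcal{S}_{\Um}$, and otherwise it contains the subarc indexed by the interval $[n_1,n_2]$ given by Lemma \ref{HMu}. Hence $T(t) \neq \emptyset$.

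For the attracting property, I would take any edge $e \notin T(t)$ and split into two cases. If some region $X$ with $e\subset \partial X$ lies in $\Omega_\phi(2+\alpha+t)$, then $e \notin J_{H(x)+t}(X)$, and the subarc property of $J_{H(x)+t}(X)$ recalled just before the definition of $T(t)$ immediately gives that the arrow on $e$ points into $J_{H(x)+t}(X) \subset T(t)$. Otherwise both regions bordering $e$ lie outside $\Omega_\phi(2+\alpha+t)$, and the argument of Lemma \ref{lem:B3.5}(i), applied verbatim with $2+\alpha+t$ in place of $2+\alpha$, shows that the arrow on $e$ points towards $\Omega_\phi(2+\alpha+t) \subset T(t)$.

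Connectedness I would prove by contradiction, mimicking the proof of Lemma \ref{lem:subtree}. Assuming $T(t)\neq \emptyset$ is disconnected, pick a shortest path $\gamma = \tilde e_1 \cup \cdots \cup \tilde e_k$ in $\Sigma$ joining two distinct components of $T(t)$, with interior disjoint from $T(t)$. The attracting property just proved implies every $\tilde e_i$ has its arrow pointing towards $T(t)$, and since the two endpoints of $\gamma$ lie in different components of $T(t)$, there must be an interior vertex $v_j$ at which the two incident edges $\tilde e_j$ and $\tilde e_{j+1}$ both point away from $v_j$. The Fork Lemma at $v_j$ forces at least one of the three adjacent regions $R$ to satisfy $|\phi(R)|\leq 2+\alpha$, hence $R\in\Omega_\phi(2+\alpha+t)$. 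The goal is then to show that $v_j$ lies on some edge in $J_{H(\phi(R))+t}(R) \subset T(t)$, producing the desired contradiction with the minimality of $\gamma$.

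The main obstacle is this last step: verifying that the two edges of $\partial R$ incident to $v_j$ lie in $J_{H(\phi(R))+t}(R)$. In the generic case, where $R$ is the region shared by $\tilde e_j$ and $\tilde e_{j+1}$ (so both outward-pointing arrows meet on $\partial R$), the arrow hypothesis translates, via the definition of $\mathrm{vect}_\phi$, into monotonicity conditions on the neighbor sequence around $R$; combining these with Lemma \ref{HMu} should pin the index $n_0$ of $v_j$ into the ``bounded'' interval $[n_1, n_2]$ of that lemma, forcing both neighbors at $v_j$ to have $|\phi|\leq H_{\Um}(\phi(R)) \leq H_{\Um}(\phi(R))+t$ and hence giving the required inclusion. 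The remaining cases, where instead one of the ``opposite'' regions at $v_j$ has $|\phi|<2$, are handled similarly, exploiting the convention $H_{\Um}=\infty$ on $[-2,2]\cup\mathcal{S}_{\Um}$, which makes $J_{H(\phi(R))+t}(R)$ equal to all of $\partial R$.
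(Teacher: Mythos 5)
The paper states this lemma without proof (it is offered as the direct generalization of Lemma \ref{lem:subtree}, whose own proof is only sketched), so your proposal cannot be compared against an "official" argument; judged on its own, the non-emptiness and attracting parts are correct and complete. For non-emptiness, $t\geq m(\Um)-2-\alpha$ together with $t\geq 0$ gives $2+\alpha+t\geq l(\Um)$, Theorem \ref{thm:omega}(1) supplies a region in $\Omega_\phi(2+\alpha+t)$, and Lemma \ref{HMu} guarantees $n_1\leq n_2$ so the corresponding arc is non-empty. The two-case argument for the attracting property is also sound: the subarc property of $J_{H(x)+t}(X)$ handles edges bordering a region of $\Omega_\phi(2+\alpha+t)$, and the Lemma \ref{lem:B3.5}(i) argument (valid for any threshold $\geq 2+\alpha$ by Theorem \ref{thm:omega}(2)) handles the rest. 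Your central sub-case of the connectedness argument --- a fork at $v_j$ whose two outward edges share the region $R$ with $|\phi(R)|\leq 2+\alpha$ --- does work exactly as you sketch: the outward arrows force $|z_{n_0-1}|\leq|z_{n_0}|$ and $|y_{n_0+1}|\leq|y_{n_0}|$, which is incompatible with the strictly monotone regimes of Lemma \ref{HMu}, so $n_1\leq n_0\leq n_2$ and both fork edges already lie in $J_{H(\phi(R))+t}(R)\subseteq T(t)$, contradicting minimality of the path.

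The genuine gap is in your "remaining cases," and the fix you propose for them is based on a false premise. The Fork Lemma only guarantees that \emph{one} of the three regions at $v_j$ is small, and it may well be one of the two regions $Y,Z$ opposite the shared region $X$, with the conclusion $|\phi(Y)|<2$. For a complex character, $|\phi(Y)|<2$ does \emph{not} imply $\phi(Y)\in[-2,2]$, so the convention $H_{\Um}=\infty$ on $[-2,2]\cup\mathcal{S}_{\Um}$ does not apply and $J_{H(\phi(Y))+t}(Y)$ is a proper finite subarc of $\partial Y$. In that situation only one of the two outward fork edges lies on $\partial Y$, its arrow points \emph{away} from $v_j$, hence $J_{H(\phi(Y))+t}(Y)$ sits on the far side of that edge, and nothing forces $v_j$ (or either fork edge) into $T(t)$: the condition for an edge of $\partial Y$ to lie in $J_{H(\phi(Y))+t}(Y)$ involves the values of the neighbours of $Y$ at that index, and these (for instance $|\phi(X)|$) are not controlled by the fork hypothesis. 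So the contradiction you need is not yet established in this sub-case, and a different mechanism is required --- for example exploiting the connectedness of $\Omega_\phi(2+\alpha+t)$ to chain the arcs $J_{H(\cdot)+t}(\cdot)$ of adjacent small regions together, or verifying directly (as the paper builds into the definition of $J(X)$ in the $T(0)$ case via the clause ``$Y\cap X\subset J(X)$ for all $Y\in\Omega_\phi(2+\alpha)$'') that edges shared by two regions of $\Omega_\phi(2+\alpha+t)$ belong to $T(t)$. As written, the connectedness claim is not proved.
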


We can describe $T(t)$ directly in terms of its edges. Suppose $e = X \cap Y$ is an edge. Then
$$e \in T(t) \Leftrightarrow \left\{ \begin{array}{ll} |x| \leq 2 + \alpha + t & \mbox{ and } |y|\leq H(x) + t \\ \mbox{ or } & \\ |y| \leq 2 + \alpha + t & \mbox{ and } |x|\leq H(y) + t \end{array} \right. $$

We can now prove the following lemma.

\begin{Lemma}\label{lem:finitetree}
For all $t \geq 0$, $\phi \in ({\bf \Phi}_{\Um})_{Q}$ if and only if $T(t)$ is finite.
\end{Lemma}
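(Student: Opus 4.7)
The plan is to prove the two implications separately.

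$(\Rightarrow)$ Assume $\phi \in ({\bf \Phi}_{\Um})_Q$. I will first argue that $\phi^{-1}(\mathcal{S}_{\Um}) = \emptyset$: if $\phi(X_0) = x \in \mathcal{S}_{\Um}$, then case (4) of Lemma \ref{lem:neighbors} forces the bi-infinite sequences $(y_n), (z_n)$ of neighbors around $X_0$ to converge in at least one direction to $(\mathfrak{y}(x), \mathfrak{z}(x))$, so infinitely many $|y_n|$ and $|z_n|$ are bounded by $M(\Um) + 1 \leq L$, contradicting (BQ2). Combined with (BQ1), this yields $\phi^{-1}([-2,2] \cup \mathcal{S}_{\Um}) = \emptyset$, hence $H(\phi(X)) < \infty$ for every region $X$. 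Theorem \ref{thm:B2} then supplies Fibonacci growth of $\log^+|\phi|$, which forces $\Omega_\phi(k)$ to be finite for every $k > 0$; in particular $\Omega_\phi(2+\alpha+t)$ is finite. For each such $X$, case (3) of Lemma \ref{lem:neighbors} gives exponential growth of $|y_n|, |z_n|$ as $n \to \pm\infty$, so only finitely many $n$ satisfy $|y_n|, |z_n| \leq H(x) + t$; that is, $J_{H(x)+t}(X)$ is a finite subarc of $\partial X$. Therefore $T(t)$ is a finite union of finite subarcs.

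$(\Leftarrow)$ Assume $T(t)$ is finite. For (BQ1), I argue by contradiction: if $\phi(X_0) \in [-2,2]$, then $|x_0| \leq 2 \leq 2+\alpha+t$ and $H(x_0) = +\infty$. The edge characterization of $T(t)$ displayed just before the lemma then forces every edge $e = X_0 \cap Y$ of $\partial X_0$ into $T(t)$ (its first clause holds trivially because $|y| \leq H(x_0) + t = +\infty$), so $\partial X_0 \subseteq T(t)$ contradicts finiteness. For (BQ2), I will first verify that $T(t)$ is an attracting subtree of $\Sigma$ --- that is, every edge not in $T(t)$ carries an arrow pointing towards $T(t)$ --- by the same argument as Lemma \ref{lem:subtree}, using Lemma \ref{lem:fork}. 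Since $T(t)$ is finite, its complement in $\Sigma$ splits into finitely many infinite subtree components $\Sigma_1, \ldots, \Sigma_m$, along each of which arrows flow consistently towards $T(t)$. On each $\Sigma_i$ I then run the Fibonacci growth machinery, in the spirit of Lemmas \ref{lem:B3.5} and \ref{lem:B3.8}, to obtain a lower Fibonacci bound for $\log^+|\phi|$ on $\Sigma_i$. Combined with the universal upper bound of Lemma \ref{lem:upper_bound}, this gives Fibonacci growth of $\log^+|\phi|$ on each branch, which together with the finiteness of the $T(t)$-portion implies $\Omega_\phi(L)$ is finite, proving (BQ2).

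The main obstacle is the Fibonacci estimate on the branches. Lemma \ref{lem:B3.5}(ii) as stated requires $\Omega^0(e) \cap \Omega_\phi(2+\alpha) = \emptyset$ at the initial edge $e$, which need not hold at the boundary of $T(t)$. The standard remedy, mirroring the proof of Lemma \ref{lem:B3.8}, is to advance one edge into $\Sigma_i$ until both adjacent regions lie outside $\Omega_\phi(2+\alpha)$, absorbing the short (finite) transition layer into the $T(t)$-part of the count. A mild complication is that we must simultaneously rule out $\phi$-values in $\mathcal{S}_{\Um}$ along $\Sigma_i$, which can be handled by repeating the argument from the forward direction: an occurrence of $\mathcal{S}_{\Um}$ together with $H \equiv +\infty$ at such a region would force an infinite sub-arc of $T(t)$ along its boundary, again contradicting finiteness.
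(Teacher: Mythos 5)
Your argument follows the same route as the paper's own proof. In the forward direction you combine the lower Fibonacci bound from Theorem \ref{thm:B2} (to make $\Omega_\phi(2+\alpha+t)$ finite) with the finiteness of each arc $J_{H(x)+t}(X)$, and you correctly supply the point the paper leaves implicit, namely that (BQ2) excludes values in $\mathcal{S}_{\Um}$ because case (4) of Lemma \ref{lem:neighbors} would otherwise produce infinitely many regions of modulus at most $M(\Um)+1\le L$. In the reverse direction you extract (BQ1), the emptiness of $\phi^{-1}([-2,2]\cup\mathcal{S}_{\Um})$ and the finiteness of $\Omega_\phi(2+\alpha+t)$ from the finiteness of $T(t)$, and then bootstrap to (BQ2) via the Fibonacci machinery; this is exactly the paper's (very terse) argument, with the bootstrap made explicit (it is the content of the remark preceding Theorem \ref{thm:realnonemptydomain}).

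The one step that does not quite close as written is the exclusion of $\mathcal{S}_{\Um}$-values in the reverse direction. Your claim that such a region ``would force an infinite sub-arc of $T(t)$ along its boundary'' is only valid when that region belongs to $\Omega_\phi(2+\alpha+t)$, because $T(t)$ is by definition the union of the arcs $J_{H(x)+t}(X)$ taken only over $X\in\Omega_\phi(2+\alpha+t)$: a region $X_0$ with $\phi(X_0)\in\mathcal{S}_{\Um}$ but $|\phi(X_0)|>2+\alpha+t$ contributes no arc at all, so no infinite sub-arc is forced directly. One must instead argue through its neighbours, which by Lemma \ref{lem:neighbors}(4) accumulate at the centre $(\mathfrak{y},\mathfrak{z})$ of the degenerate conic; this does yield infinitely many edges of $T(t)$ when $\max(|\mathfrak{y}|,|\mathfrak{z}|)\le 2+\alpha+t$ (infinitely many neighbours then lie in $\Omega_\phi(2+\alpha+t)$ and each contributes at least one edge), but requires additional care when $M(\Um)>2+\alpha+t$, i.e.\ for small $t$. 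To be fair, the paper dismisses this with ``it is clear that $\phi^{-1}([-2,2]\cup\mathcal{S}_{\Um})=\emptyset$'', so your write-up is no less rigorous than the source; but this is the case you should address explicitly if you want a complete proof.
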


\begin{proof}
Let $\phi \in ({\bf \Phi}_{\Um})_{Q}$, then for each $X \in \Omega_\phi (2+\alpha+t)$ and $t \geq 0$, the arc $J_{H(x)+t} (X)$ has a finite number of edges as $x \notin [-2 , 2] \cup \mathcal{S}_{\Um}$. On the other hand, the lower Fibonacci bound proves that $\Omega_\phi (2 + \alpha + t )$ is finite, hence $T(t)$ is finite.

Reciprocally, suppose $T(t)$ is finite. Then it is clear that $\phi^{-1} ([-2 , 2] \cup \mathcal{S}_{\Um}) = \emptyset$. Now, for all $X \in \Omega_\phi (2 + \alpha + t)$, the arc $J_{H(x)+t} (X)$ contains at least one edge. Hence we infer that $\Omega_\phi (2 + \alpha + t)$ is finite, so $\phi \in ({\bf \Phi}_{\Um})_{Q}$.
\end{proof}

\begin{Theorem}\label{thm:openproperty}
The set $({\bf \Phi}_{\Um})_{Q}$ of maps satisfying the BQ-conditions is an open subset of ${\bf \Phi}_{\Um}$.
\end{Theorem}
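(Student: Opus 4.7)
The strategy is to exploit Lemma \ref{lem:finitetree}, which characterizes membership in $({\bf \Phi}_{\Um})_{Q}$ by the finiteness of an attracting subtree $T(t)$ for any $t \geq 0$. Given $\phi_0 \in ({\bf \Phi}_{\Um})_{Q}$, I would fix some $t_0 > 0$ and consider the finite subcomplex $T_0 := T_{\phi_0}(t_0)$. The goal is then to produce a neighborhood $\mathcal{U}$ of $\phi_0$ in ${\bf \Phi}_{\Um}$ such that for every $\phi \in \mathcal{U}$, the tree $T_\phi(t_0/2)$ is contained in a fixed finite enlargement of $T_0$, hence is finite, so that $\phi \in ({\bf \Phi}_{\Um})_{Q}$ by Lemma \ref{lem:finitetree}.

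The argument splits into a local part on a finite subtree and a propagation part outward. For the local part, let $T_1$ denote the finite enlargement of $T_0$ obtained by adjoining every edge of $\Sigma$ sharing a vertex with $T_0$. Since ${\bf \Phi}_{\Um}$ is topologized via its identification with an algebraic subvariety of $\C^3$, and since $\phi(X)$ is a polynomial function of the initial triple $(x_0,y_0,z_0)$ for each fixed region $X$, the map $\phi \mapsto \phi|_{\text{regions meeting }T_1}$ is continuous. For $\phi \in \mathcal{U}$ sufficiently close to $\phi_0$, the values of $\phi$ on these finitely many regions are uniformly close to those of $\phi_0$. Consequently the strict open conditions satisfied by $\phi_0$ on this finite set, namely $\phi_0(X) \notin [-2,2] \cup \mathcal{S}_{\Um}$ and the strict attracting inequalities characterizing the boundary edges of $T_{\phi_0}(t_0)$, persist for $\phi$ with the slightly weakened parameter $t_0/2$.

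For the propagation outward, consider any edge $e = X \cap Y$ on the outer boundary of $T_1$ oriented outward, with $|\phi(X)| > 2 + \alpha + t_0/2$ and $x := \phi(X) \notin [-2,2] \cup \mathcal{S}_{\Um}$. The neighboring sequences $(Y_n, Z_n)$ of $X$ along the branch of $\Sigma$ emanating from $e$ are governed by Case 3 of Lemma \ref{lem:neighbors}, with exponential behavior dictated by the coefficients $A, B$ in the closed-form expressions for $y_n, z_n$. For $\phi_0$ the outward values grow exponentially from $e$ by the Fibonacci lower bound (Theorem \ref{thm:B2}), so the coefficient controlling outward growth is nonzero; moreover $A$ and $B$ vanish precisely when $x \in \mathcal{S}_{\Um}$, which is avoided. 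By continuity of $A, B$ in $(\phi(X),\phi(Y),\phi(Z))$, these nonvanishing properties and the modulus $|\Lambda| > 1$ persist for $\phi \in \mathcal{U}$, with the growth rate bounded below uniformly on the finite outer boundary of $T_1$. Therefore no region past the boundary of $T_1$ has $|\phi(\cdot)| \leq 2 + \alpha + t_0/2$, and every edge past this boundary still points strictly toward $T_1$ under $\phi$; it follows that $T_\phi(t_0/2) \subseteq T_1$, which is finite, and Lemma \ref{lem:finitetree} then gives $\phi \in ({\bf \Phi}_{\Um})_{Q}$.

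The main obstacle is precisely the propagation step: continuity of $\phi \mapsto \phi(X)$ degenerates as $X$ recedes from the base vertex, so one cannot argue directly that values remain large arbitrarily far from $T_0$ for $\phi$ near $\phi_0$. This is where the exponential growth supplied by Case 3 of Lemma \ref{lem:neighbors} does the essential work, converting local continuity on $T_1$ into global control, and it is also why the set $\mathcal{S}_{\Um}$ must be excised from the outset in the definition of $L(\Um)$: it is exactly the locus where this exponential growth mechanism fails.
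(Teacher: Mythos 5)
Your overall strategy---reduce to finiteness of the attracting tree via Lemma \ref{lem:finitetree}, control a finite collar of $T_0$ by continuity, then propagate outward---is the same skeleton as the paper's, and the local part is fine. The gap is in the propagation step. Persistence under perturbation of $A,B\neq 0$ and $|\Lambda|>1$ controls only the neighbour sequences $(Y_n),(Z_n)$ of the finitely many regions meeting $T_1$; it says nothing about regions deeper in a branch beyond the boundary of $T_1$ (for instance the neighbours of $Y_{100}$), which are not neighbours of any region meeting $T_1$. As you observe yourself, continuity of $\phi\mapsto\phi(X)$ degenerates as $X$ recedes, and Case 3 of Lemma \ref{lem:neighbors} does not by itself bridge that: to exclude small values arbitrarily deep in the branches one needs an inductive mechanism (the Fork Lemma \ref{lem:fork}, as packaged in Lemma \ref{lem:B3.5}) or a connectivity argument, not just the closed-form growth of a single region's neighbours. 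Moreover, the sentence ``the outward values grow exponentially from $e$ by the Fibonacci lower bound (Theorem \ref{thm:B2})'' invokes a property of $\phi_0$, which is already known to be a BQ map; it cannot be transferred to the perturbed $\phi$, since establishing such control for $\phi$ is exactly what is at stake. Your ``Therefore no region past the boundary of $T_1$ has $|\phi(\cdot)|\leq 2+\alpha+t_0/2$'' is therefore unproved.

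The paper's device for closing precisely this gap is worth internalizing: quasi-convexity (Theorem \ref{thm:omega}), and hence the connectivity of the attracting tree $T_{\phi}(t)$, holds for \emph{every} Markoff map, not only BQ ones. One fixes $t_1>t_0>m(\Um)-2-\alpha$, takes a finite $T_{\phi_0}(t_2)$ containing $T_{\phi_0}(t_1)$ in its interior, and checks by continuity on the \emph{finite} edge set $T_{\phi_0}(t_2)\setminus T_{\phi_0}(t_1)$ that $T_{\phi}(t_1)$ avoids this collar; since $T_{\phi}(t_1)$ is connected and (again by continuity, using $t_1>m(\Um)-2-\alpha$) meets the non-empty core $T_{\phi_0}(m(\Um)-2-\alpha)$, it cannot cross the collar and is trapped inside $T_{\phi_0}(t_2)$, hence finite. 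Connectivity is what converts local control on a finite annulus into global control of the whole tree. If you prefer to keep a direct outward argument, you must replace your growth step by the inductive propagation of Lemma \ref{lem:B3.5} applied to the perturbed map at each outward-directed edge of the circular boundary of $T_1$, after first verifying its hypotheses there by continuity.
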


\begin{proof}
Let $\phi \in ({\bf \Phi}_{\Um})_{Q}$, and $t_1 > t_0 > m(\Um) - 2 - \alpha$. By Lemma \ref{lem:finitetree}, $T(t_1)$ is a finite subtree of $\Sigma$, so we may choose $t_2>t_1$ large enough so that $T(t_2)$ contains $T(t_1)$ in its interior, that is, it contains $T(t_1)$ together with all the edges of the circular set $C(T(t_1))$. Note that $T(t_2)$ is also a finite subtree of $\Sigma$. For any $\phi' \in {\bf \Phi}_{\Um}$, we write $T'(t)$ for $T_{\phi'} (t)$.

\textbf{Claim}: If $\phi'$ is sufficiently close to $\phi$, then $T'(t_1) \cap T(t_2) \subseteq T(t_1)$.

To prove the claim, choose an edge $e \in T(t_2) \setminus T(t_1)$ with neighboring region $e=X\cap Y$. Since $e \in T(t_2)$, we may assume $|\phi (X)| \leq 2 + t_2 + \alpha$ and $|\phi (Y)| \leq H(x) + t_2$. Then, as $e \notin T(t_1)$, we have  ($|\phi (X)| > 2 + t_1 + \alpha$ or $|\phi (Y)| > H(x) + t_1$)  and ($|\phi (Y)| > 2 + t_1 + \alpha$ or $|\phi (X)| > H(y) + t_1$). Thus, if $\phi'$ is sufficiently close to $\phi$, the same inequalities hold when we replace $\phi$ with $\phi'$. Hence $e \notin T'(t_1)$. This proves the claim, since there are only finitely many edges in $T(t_2) \setminus T(t_1)$.

We know that $T(m(\Um)-2-\alpha)$ is a non-empty subtree of $T(t_2)$ and $t_1 > m(\Um) - 2 - \alpha$, it follows that, for $\phi'$ sufficiently close to $\phi$, we have $T'(t_1) \cap T(t_2) \supseteq T(m(\Um)-2-\alpha) \neq \emptyset$.

Since $T'(t_1) \cap T(t_2) \subseteq T(t_1)$ and $T(t_1)$ is contained in the interior of $T(t_2)$, we know that $T(t_2)$ contains a connected component of $T'(t_1)$. Since $T'(t_1)$ is connected, we must have $T'(t_1) \subseteq T(t_2)$. Therefore $T'(t_1)$ is finite, and so $\phi' \in ({\bf \Phi}_{\Um})_{Q}$.
\end{proof}

\begin{Theorem}\label{thm:properlydiscontinuously}
The mapping class group acts properly discontinuously on the set $({\bf \Phi}_{\Um})_{Q}$.
\end{Theorem}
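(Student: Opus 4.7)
The plan is to mimic Bowditch's approach in \cite{bow_mar} and the generalization by Tan--Wong--Zhang in \cite{tan_gen}, using the attracting subtree $T_\phi(t)$ constructed in the preceding subsection as the essential tool. The mapping class group action on ${\bf \Phi}_{\Um}$ is induced from its action on $\Sigma$ by tri-coloring-preserving tree automorphisms, namely $(\theta \cdot \phi)(X) = \phi(\theta^{-1} X)$. Consequently $\Omega_{\theta \cdot \phi}(k) = \theta \cdot \Omega_\phi(k)$ and, more importantly, $T_{\theta \cdot \phi}(t) = \theta \cdot T_\phi(t)$.

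Given a compact set $K \subset ({\bf \Phi}_{\Um})_Q$, the goal is to show that only finitely many $\theta \in \MCG$ satisfy $\theta \cdot K \cap K \neq \emptyset$. The strategy is to find a single threshold $t_0$ and a single finite subtree $T_* \subset \Sigma$, both depending on $K$, such that the base vertex $v_0$ lies in $T_\phi(t_0) \subseteq T_*$ for every $\phi \in K$. Granted this, if $\theta \cdot \phi \in K$ for some $\phi \in K$, then $v_0 \in T_{\theta \cdot \phi}(t_0) = \theta \cdot T_\phi(t_0)$, whence $\theta^{-1} v_0 \in T_\phi(t_0) \subseteq T_*$. Since $T_*$ has only finitely many vertices and the $\MCG$ action on $V(\Sigma)$ has finite vertex stabilizers (because each generator preserves the tri-coloring of $\Sigma$, so an element fixing $v_0$ must fix the three colored regions around $v_0$, hence by induction along the tree fixes all of $V(\Sigma)$ up to the kernel of the action on the character variety), this constrains $\theta$ to finitely many possibilities.

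The choice of $t_0$ is straightforward: by compactness of $K$ in $\C^3$, there exists $M > 0$ with $|\phi(X_0)|, |\phi(Y_0)|, |\phi(Z_0)| \leq M$ for all $\phi \in K$, where $X_0, Y_0, Z_0$ are the three regions around $v_0$. Taking $t_0 = \max\{M - 2 - \alpha,\, m(\Um) - 2 - \alpha\}$ ensures all three edges at $v_0$ lie in $T_\phi(t_0)$ via the explicit edge characterization (using $H_\Um(x) \geq 2 + \alpha$), so $v_0 \in T_\phi(t_0)$ and $T_\phi(t_0)$ is non-empty and finite for every $\phi \in K$.

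The main obstacle is constructing the uniform finite subtree $T_*$; I would handle this by lifting the openness argument from the proof of Theorem \ref{thm:openproperty}. For each $\phi \in K$, that argument furnishes an open neighborhood $U_\phi$ of $\phi$ and a larger threshold $t_{2,\phi} > t_0$ such that $T_\phi(t_{2,\phi})$ is a finite subtree of $\Sigma$ and $T_{\phi'}(t_0) \subseteq T_\phi(t_{2,\phi})$ for every $\phi' \in U_\phi$. Compactness of $K$ then yields a finite subcover $\{U_{\phi_i}\}_{i=1}^n$, and setting $T_* := \bigcup_{i=1}^n T_{\phi_i}(t_{2,\phi_i})$ produces a finite subtree of $\Sigma$ with $T_\phi(t_0) \subseteq T_*$ for every $\phi \in K$, completing the proof.
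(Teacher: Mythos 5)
Your proof is correct, but it concludes by a different mechanism than the paper. The paper argues by contradiction with sequences: if infinitely many distinct $H_i$ satisfied $H_iK\cap K\neq\emptyset$, one extracts $\phi_i\to\phi$, $H_i\to\infty$, $H_i(\phi_i)\to\phi'$, then uses the uniform containment $T_{\phi_i}(t_1)\subseteq T_\phi(t_2)$ (the same compactness-plus-openness input you use) to get a \emph{single} constant $\kappa$ in the lower Fibonacci bound of Theorem \ref{thm:B2} valid for all $\phi_i$ with $i$ large; since $H_i\to\infty$ forces $F_e(H_i^{-1}X_0)\to\infty$, the coordinates of $H_i(\phi_i)$ blow up, contradicting $H_i(\phi_i)\to\phi'\in K$. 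You instead exploit the equivariance $T_{\theta\cdot\phi}(t)=\theta\cdot T_\phi(t)$ together with the triviality of vertex stabilizers for tri-coloring-preserving tree automorphisms, reducing everything to the finiteness of a uniform tree $T_*$ and a counting argument on $\theta^{-1}v_0$. Both proofs rest on the same two pillars (finiteness of the attracting subtree for BQ-maps, and its local uniformity over $K$ extracted from the proof of Theorem \ref{thm:openproperty}); your route avoids invoking Fibonacci growth at this stage and makes the finiteness count completely explicit via orbits of a base vertex, while the paper's route avoids discussing stabilizers and the kernel of the action (which you correctly flag must be quotiented out, as the paper does implicitly by passing to $\PSL(2,\Z)$). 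Two small points worth tightening: you should take $t_0$ strictly greater than $m(\Um)-2-\alpha$ so that the openness argument applies verbatim, and you should note that the three base values $|\phi(X_0)|,|\phi(Y_0)|,|\phi(Z_0)|$ avoid $[-2,2]\cup\mathcal{S}_{\Um}$ for $\phi\in({\bf \Phi}_{\Um})_Q$, so that $H_{\Um}$ is finite there and the edge criterion for membership in $T_\phi(t_0)$ can be applied; neither is a real gap.
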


\begin{proof}
The statement is equivalent to the fact that $\mathrm{PSL}(2,\Z)$
acts properly discontinuously, that is, for any compact subset $K$
of $({\bf \Phi}_{\Um})_{Q}$, the set $\{H \in \mathrm{PSL}(2,\Z)
\mid H K\cap K \neq \emptyset \}$ is finite. Suppose not, then there
exists a sequence of distinct $H_i \in \mathrm{PSL}(2,\Z)$ and
$\phi_i \in K$ such that $H_i(\phi_i) \in K$. Passing to a
subsequence, by the compactness of $K$, we may assume that $\phi_i
\rightarrow \phi \in K$, $H_i \rightarrow \infty$, and $H_i(\phi_i)
\rightarrow \phi' \in K$. (Note that in a discrete group, for
example $\mathrm{PSL}(2,\Z)$, an infinite sequence of distinct
elements $H_i$ should tend to $\infty$.)

Now, as in the proof of Theorem \ref{thm:openproperty}, we have the tree $T_{\phi}(t_1)$ of $\phi$ is finite for some $t_1>0$, and that $T_{\phi_i}(t_1)$ is contained in the finite tree $T_{\phi}(t_2)$, for some $t_2>t_1$ and for all $i$ sufficiently large. This implies that the same constant $\kappa$ can be used in the lower Fibonacci bound for all $\phi_i$ for $i$ sufficiently large, and hence $H_i(\phi_i) \rightarrow \infty$ as $i \rightarrow \infty$. (Note that, in order to make sense of $\phi \rightarrow \infty$, we use the identification of ${\bf \Phi}_{\Um}$ with $\mathcal{X}_{\Ut}$, where $\Um = \mathrm{GT} (\Ut)$, and hence with the character variety $\mathcal{V} = \{(x,y,z) \in {\C}^3 \mid x^2+y^2+z^2+xyz=px +qy +rz +s\}$; see the discussion in the Introduction and in Remark \ref{phi_ut}.) This contradicts $H_i(\phi_i) \rightarrow \phi' \in K$.
\end{proof}

\section{The real case}\label{s:real_case}

In this section we focus on the set $\mathcal{X}_{\Ut}^\R$ of real characters $(x,y,z)$ with boundary data $\Ut = (a,b,c,d) \in \R^4$. The representations corresponding to these characters are representations of $\pi_1 (S)$ into one of the two real forms of $\SLtwoC$, namely $\SLtwoR$ and $\mathrm{SU}(2)$. In particular, we will prove Theorem \ref{thmB} from the Introduction, see Theorem \ref{thm:realnonemptydomain} and Corollary \ref{real_domain}.

\subsection{Topology of the real character variety}

In \cite{ben_the}, Benedetto and Goldman completely described the topology of the set $\mathcal{X}_{\Ut}^{\mathbb{R}}$, when $\Ut \in \R^4$. There are six different cases depending on the number $n$ of boundary traces in $[-2 , 2]$, that can vary from $0$ to $4$:

\begin{enumerate}
    \item a quadruply punctured sphere, if $n =0$ and $abcd<0$;
    \item a disjoint union of a triply punctured torus and a disc, if $n=0$ and $abcd>0$;
    \item a disjoint union of a triply punctured sphere and a disc, if $n=1$;
    \item a disjoint union of an annulus and two discs, if $n=2$;
    \item a disjoint union of four discs, if $n = 3$;
    \item a disjoint union of four discs and a sphere, if $n =4$.
\end{enumerate}

Representations in $\mathrm{SU}(2)$ are such that $n = 4$ and correspond to the compact connected component of $\mathcal{X}_{\Ut}^\R$. However, when the parameters $\Ut = (a,b,c,d) \in [-2 , 2]$ satisfy certain inequalities (see Proposition 1.4 of \cite{ben_the}), the compact component consists of representations in $\SLtwoR$. The action of the mapping class group on the $\mathrm{SU}(2)$-character variety is ergodic by the work of Goldman \cite{gol_erg}.

The non-compact components always correspond to representations into
$\SLtwoR$, and in this case the dynamics of the action is richer.
When $n=0$ the representations send boundary curves to hyperbolic
elements of $\SLtwoR$. In this case, by Goldman \cite{gol_top} the
components are indexed by the relative Euler class which, according
to Milnor-Wood inequality, is an element of $\{-2,-1,0,1,2\}$.

So, in the case $n=0$, we have the following identifications:
\begin{itemize}
\item The disc component when $abcd>0$ corresponds to hyperbolic structures on $S$ with geodesic boundary or equivalently to representations with maximal relative Euler class $2$ or $-2$; see \cite{ben_the}.
\item The component in the case $abcd<0$ corresponds to representations with relative Euler class $1$ or $-1$. Indeed, using the additivity of the relative Euler class, when we cut $S$ into two pairs of pants, the relative Euler class of one of them is 0 and the other is $\pm 1$. Since the product of the three boundary traces  is positive in the first pair of pants and negative in the other, we have $abcd<0$.
\item The triply punctured torus component corresponds to representations with relative Euler class $0$. For the same reason, when cutting $S$ into two pairs of pants the two relative Euler class are $+1$ and $-1$, and hence both products of boundary traces are negative.
\end{itemize}

In the next section we will prove that the action of the mapping class group on the components corresponding to non-maximal Euler class are never ergodic on the whole component, except for a compact subset of dimension 1 of parameters $\Ut = (a,b,c,d)\in \R^4$ corresponding to the equations $p(a,b,c,d) = q(a,b,c,d)=r(a,b,c,d) = 0$ and $s\in [4 , 20]$. This is rather surprising as one could have expected that when $(p,q,r)$ are small enough, the dynamics of the action would be very close to the dynamics on the one-holed torus, which is known to be ergodic. In fact, domains of discontinuity will appear as soon as these parameters are non-zero; see Theorem \ref{thm:realnonemptydomain} and Corollary \ref{real_domain}.

\subsection{Construction of a Markoff-map with BQ-conditions}

We will prove in this section that, if the parameters $p$, $q$ and
$r$ are not all zero, then the set of characters in
$\mathcal{X}_{\Ut}^\R$ satisfying the BQ-condition is non-empty.

\begin{Lemma} Let $\Um = (p,q,r,s)$ and suppose $(p,q,r) \neq (0,0,0)$.

For all $\varepsilon >0$ and $K >0$, there exists a real
$\Um$--Markoff triple $(x_1,x_2,x_3)$ such that one of the values is
$-( 2+ \varepsilon)<x_i<  -2$ and the other two satisfy  $x_j  x_k >
0$ and  $|x_j| , |x_k| > K$
\end{Lemma}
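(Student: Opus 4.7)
The plan is to fix the first coordinate inside $(-(2+\varepsilon), -2)$ and then solve the $\Um$--Markoff equation for the remaining two coordinates, regarded as a real plane conic.

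First I would pick $x_0 \in (-(2+\varepsilon), -2)\setminus \mathcal{S}_{\Um}$; this is possible since $\mathcal{S}_{\Um}$ is finite. Substituting $x_1 = x_0$ in the $\Um$--Markoff equation produces the conic
\[
C(x_0)\co y^2 + z^2 + x_0\, yz - q y - r z - (p x_0 + s - x_0^2) = 0
\]
in the $(y,z)$--plane. Since $x_0^2 > 4$, the associated quadratic form $y^2 + x_0 yz + z^2$ has negative determinant $1 - x_0^2/4$, so it is indefinite and $C(x_0)$ is a real hyperbola; non-degeneracy follows from $x_0 \notin \mathcal{S}_{\Um}$ via the identity $\Delta = AB(x_0^2 - 4)^2$ recorded in Section~\ref{sec:neighbors}.

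Next I would analyze the asymptotic slopes of $C(x_0)$. Setting $t = z/y$ in the leading quadratic part gives $t^2 + x_0 t + 1 = 0$, whose two real roots are
\[
t_{\pm} = \tfrac{1}{2}\bigl(-x_0 \pm \sqrt{x_0^2 - 4}\bigr).
\]
The crucial observation is that $x_0 < -2$ forces $-x_0 > \sqrt{x_0^2 - 4} > 0$, so \emph{both} asymptotic slopes $t_{\pm}$ are strictly positive. Consequently both branches of the hyperbola escape to infinity inside the open quadrants $\{y > 0,\, z > 0\}$ and $\{y < 0,\, z < 0\}$, and along either branch the pair $(|y|, |z|)$ is unbounded.

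For any prescribed $K > 0$ I may therefore travel far enough along one branch of $C(x_0)$ to obtain a real point $(y_0, z_0) \in C(x_0)$ with $y_0 z_0 > 0$ and $\min(|y_0|, |z_0|) > K$. Setting $(x_1, x_2, x_3) := (x_0, y_0, z_0)$ then produces the desired $\Um$--Markoff triple. The only delicate point is verifying positivity of \emph{both} asymptotic slopes: this is exactly where the choice of the interval $(-(2+\varepsilon), -2)$ (as opposed to $(2, 2+\varepsilon)$) is essential, since for $x_0 > 2$ both slopes would be negative and one would obtain $y_0 z_0 < 0$ at infinity instead. The hypothesis $(p, q, r) \neq (0, 0, 0)$ plays no role in this existence statement, but is clearly reserved for the subsequent step of promoting such a triple into a full Markoff map satisfying the BQ--conditions.
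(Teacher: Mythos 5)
Your proof is correct, but it takes a genuinely different route from the paper's. The paper restricts attention to the symmetric slice $y=z$: after reducing (WLOG) to $(q,r)\neq(0,0)$ with $qr\ge 0$, it looks for triples of the form $(-(2+\epsilon),y,y)$, solves the resulting quadratic for $\epsilon$ as a function of $y$, and shows via a Taylor expansion that $\epsilon(y)=-\tfrac{q+r}{y}+o(1/y)$, so that choosing $y$ large with sign opposite to $q+r$ gives $0<\epsilon(y)<\varepsilon$. This is precisely where the hypothesis $(p,q,r)\neq(0,0,0)$ enters: it guarantees $q+r\neq 0$ after the reduction, hence $\epsilon(y)>0$. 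You instead fix $x_0\in(-(2+\varepsilon),-2)$ first and read off the answer from the geometry of the conic in the $(y,z)$--plane: since $x_0<-2$ the quadratic form is indefinite, so the real locus is a non-empty unbounded hyperbola, and both asymptotic slopes $t_\pm=\tfrac12(-x_0\pm\sqrt{x_0^2-4})$ are positive (their sum $-x_0$ and product $1$ are both positive), forcing $yz>0$ with both coordinates large far along either branch. Your argument is more elementary and, as you correctly observe, does not use $(p,q,r)\neq(0,0,0)$ at all, so it actually proves a slightly stronger statement. What the paper's computation buys in exchange is an explicit family of triples $(-(2+\epsilon(y)),y,y)$ together with the precise asymptotics of $\epsilon(y)$; the very next lemma (the construction of a real Markoff map with $\Omega_\phi(2+\alpha)$ finite) reuses exactly this symmetric triple and the lower bound $\epsilon(y)>\max\{-q,-r\}/y$ in its induction, so the symmetric ansatz is not an arbitrary choice but a setup for the sequel. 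If one adopted your proof, that subsequent argument would need to be adapted to a not-necessarily-symmetric starting triple.
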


\begin{proof}
As $(p,q,r)\neq (0,0,0)$, we may assume, without loss of generality, that $(q,r) \neq (0,0)$ with $qr \geq 0$  (so $q$ and $r$ have the same sign). In this case, we search a $\Um$--Markoff triple of the form $(- (2+\epsilon) ,   y ,  y)$ with $\epsilon > 0$ and $y$ is such that $y (q+r) < 0$ (so $y$ and $q$ are of opposite sign).

Such a point satisfies the equation:
$$(2+\epsilon)^2 + 2y^2 - (2+\epsilon)y^2 + p (2+\epsilon) - (q+r) y - s = 0.$$
This equation is quadratic in $\epsilon$ so we can express
$\epsilon$ as a function of $y$:

$$\epsilon^\pm(y) = \frac 12 \left( (y^2 - p - 4) \pm \sqrt{ y^4 -8y^2-2p y^2 +4 (q+r)y + (p^2+4s)} \right).$$

When $y$ gets large and we choose the solution with the minus sign, we have the following Taylor series.

\begin{align*}
\epsilon(y) & \underset{y \rightarrow \infty }{=} \frac 12 \left(  (y^2 -p -4) - y^2 \sqrt{ 1+\left(\frac{-8-2p}{y^2} +4 \frac{q+r}{y^3} + \frac{p^2+4s}{y^4} \right) } \right) \\
 &  \underset{y \rightarrow \infty }{=} \frac 12 \left( (y^2 -p -4) - y^2 \left( 1+\frac 12 \left(\frac{-8-2p}{y^2} +4 \frac{q+r}{y^3} \right) + o\left(\frac{1}{y^3}\right) \right) \right)\\
&\underset{y \rightarrow \infty }{=} - \dfrac{q+r}{y} + o\left( \frac{1}{y} \right).
\end{align*}

So taking $y$ large enough (in particular larger than $K$), we have $0<\epsilon(y)<\varepsilon$ and $|y| >K$. Hence the triple $(-2 -\epsilon(y) , y , y)$ is a $\Um$--Markoff triple satisfying the conditions.
\end{proof}

\begin{Lemma}
Let $\Um = (p,q,r,s)$ and suppose $(p,q,r) \neq (0,0,0)$. There
exists a $\Um$--Markoff map $\phi \in {\bf \Phi}_{\Um} $ such that
$|\Omega_\phi (2+\alpha)| $ is finite.
\end{Lemma}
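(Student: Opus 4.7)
The plan is to apply the previous lemma to construct an explicit Markoff triple at a distinguished vertex, and then to show, using Lemma \ref{lem:neighbors} together with the quasi-convexity of Theorem \ref{thm:omega}(ii), that the resulting Markoff map $\phi$ has $\Omega_\phi(2+\alpha)$ equal to a single region.

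Since $(p,q,r)\neq(0,0,0)$, we have $\alpha>0$. Fix $\varepsilon\in(0,\alpha)$ and apply the previous lemma, with this $\varepsilon$ and a large constant $K$ to be chosen below, to obtain a $\Um$--Markoff triple containing one coordinate $x_0\in(-(2+\varepsilon),-2)$ and two coordinates of common nonzero sign with moduli exceeding $K$. Since $\mathcal{S}_\Um$ is finite and $x_0$ varies continuously as $K$ grows, we may further arrange $x_0\notin\mathcal{S}_\Um$. After permuting colors if needed (which permutes $(p,q,r)$ but does not alter the theory), we place the small coordinate on a region $X_0\in\Omega_1$ at a fixed vertex $v_0$, with the two large coordinates on the adjacent $Y_0\in\Omega_2$ and $Z_0\in\Omega_3$; this triple extends uniquely to a $\Um$--Markoff map $\phi$.

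The key step is to show that, for $K$ large enough, every neighbor of $X_0$ has absolute value greater than $2+\alpha$. Since $x_0\notin[-2,2]\cup\mathcal{S}_\Um$, Case~3 of Lemma \ref{lem:neighbors} applies, and the neighbor sequences of $X_0$ are given by
$$y_n = A\Lambda^{2n}+B\Lambda^{-2n}+\mathfrak{y}(x_0),\qquad z_n = -(A\Lambda^{2n+1}+B\Lambda^{-2n-1})+\mathfrak{z}(x_0),$$
with $\Lambda$ real and negative (so $\Lambda^{\pm 2}>0$). Using the explicit formulas for $A, B, \mathfrak{y}, \mathfrak{z}$ from Section~\ref{sec:neighbors} and the fact that $y_0, z_0$ have common sign $\sigma$ and large modulus, a direct computation shows that $A, B, \mathfrak{y}(x_0), \mathfrak{z}(x_0)$ all share the sign $\sigma$. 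Consequently every summand defining $y_n$ carries the same sign, and the AM--GM inequality gives $|y_n|\geq 2\sqrt{|AB|}+|\mathfrak{y}(x_0)|$; the analogous bound holds for $|z_n|$. Since $|A|$ and $|B|$ grow with $K$, the right-hand sides exceed $2+\alpha$ uniformly in $n$ for $K$ sufficiently large.

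To finish, $|x_0|=2+\varepsilon<2+\alpha$ places $X_0$ in $\Omega_\phi(2+\alpha)$, while the key step shows that $X_0$ is isolated from the rest of $\Omega_\phi(2+\alpha)$ in the region-adjacency graph. Theorem \ref{thm:omega}(ii) forces $\Omega_\phi(2+\alpha)$ to be connected, so it equals $\{X_0\}$, a finite set. The main obstacle is the uniform-in-$n$ control in the key step: as $\varepsilon\to 0$, both $|A|, |B|$ and $|\mathfrak{y}|, |\mathfrak{z}|$ diverge while $\Lambda\to -1$, so the lower bound for $|y_n|$ only survives because the symmetric triple $(y_0,z_0)$ produced by the previous lemma forces $A$ and $B$ to share a sign, ruling out destructive cancellation between the two exponential modes.
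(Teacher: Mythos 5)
Your proposal is correct in outline but follows a genuinely different route for the core estimate. The paper takes the symmetric triple $(-2-\epsilon(y),y,y)$ from the previous lemma and proves directly, by a two-case induction on the recurrences $y_{n+1}=q+(2+\epsilon)z_n-y_n$ and $z_{n+1}=r+(2+\epsilon)y_{n+1}-z_n$ (the cases being $qr\neq 0$ versus one of $q,r$ equal to zero), that the neighbor values around $X_0$ are monotone and stay above $y$; you instead invoke the closed-form solution $y_n=A\Lambda^{2n}+B\Lambda^{-2n}+\mathfrak{y}$ from Case~3 of Lemma \ref{lem:neighbors} and rule out cancellation between the two exponential modes by a sign argument plus AM--GM. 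Both arguments then finish identically, using the connectedness of $\Omega_\phi(2+\alpha)$ from Theorem \ref{thm:omega}(ii) to conclude $\Omega_\phi(2+\alpha)=\{X_0\}$. Your route is conceptually cleaner (it explains \emph{why} the neighbors stay large: all four of $A$, $B$, $\mathfrak{y}(x_0)$, $\mathfrak{z}(x_0)$ carry the sign of $y$) and gives the uniform lower bound $2\sqrt{AB}+\mathfrak{y}$ in one stroke, whereas the paper's induction is more elementary but must be split into cases and repeated for $n<0$.

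The one place you should not wave your hands is the asserted ``direct computation'' that $A$, $B$, $\mathfrak{y}(x_0)$, $\mathfrak{z}(x_0)$ share the sign of $y$. This is true, but it is more delicate than ``$y_0,z_0$ have common sign and large modulus'' suggests: since $\epsilon(y)\sim -(q+r)/y$, one finds $\mathfrak{y}(x_0),\mathfrak{z}(x_0)\sim y/2$, so $y_0-\mathfrak{y}$ and $z_0-\mathfrak{z}$ are only of order $y/2$ while $\delta=\Lambda-\Lambda^{-1}\sim 2\sqrt{\epsilon}\to 0$; the positivity of $A=\frac{1}{\delta}\bigl(\frac{\epsilon+\delta}{2}(y_0-\mathfrak{y})+\frac{r-q}{4+\epsilon}\bigr)$ (and similarly $B$) therefore rests on the quantitative fact that $\sqrt{\epsilon(y)}\,(y_0-\mathfrak{y})\sim\tfrac12\sqrt{-(q+r)y}\to\infty$ dominates the bounded term $\frac{r-q}{4+\epsilon}$, which uses the precise asymptotics of $\epsilon(y)$ from the previous lemma's proof, not merely the statement of that lemma. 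Once this computation is written out (it does work, and yields $A,B\sim y/4$, hence $|y_n|,|z_n|\gtrsim y$ uniformly in $n$, matching the paper's bound), your proof is complete.
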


\begin{proof}
As the three numbers $p$, $q$ and $r$ are not all zero, there are two of them that are of the same sign and not both zero. Without loss of generality, we can assume that $(q,r)\neq (0,0)$ and $q \leq 0$ and $r \leq 0$ (the positive case can be treated the same way). We know from the previous lemma that we can find a $\Um$--Markoff triple $(-2-\epsilon(y) , y , y)$, with $y >0$ large, and  $\epsilon(y)$ is a function of $y$ whose Taylor series is $\epsilon(y) = -\frac{q+r}{y} + o\left( \frac{1}{y} \right)$.  Let $\phi$ be the corresponding $\Um$--Markoff map. Let's call $X_0 , Y_0$ and $Z_0$ the regions corresponding to this triple. The neighbors around $X_0$ are denoted by the two sequences $Y_n$ and $Z_n$.

The values of the neighbors of $X_0$ are of the form:
\begin{align*}
    y_{n+1} & = q + (2+\epsilon) z_n - y_n; \\
    z_{n+1} & = r + (2+\epsilon) y_{n+1} - z_n.
\end{align*}

$\bullet$ First assume that $q$ and $r$ are both non-zero so that $-(q+r) > \max \{-q , -r\}$. Hence, we can choose $y_0 = z_0 = y$ large enough so that
$$\epsilon = \epsilon(y) > \frac{\max \{-q , -r\}}{y} > 0.$$
In this case, we can prove by recurrence that $ \forall n > 0$ we have:
$$y_{n+1} > z_n > y_n > y.$$
Indeed for $n=1$ we have:
\begin{align*}
    y_1 & = q + (2+\epsilon)y - y = y + \epsilon y + q > y;   \\
    z_1 & = r + (2+\epsilon)y_1 - y = y_1 + (y_1-y) + r + y_1 \epsilon >  y_1; \\
    y_2 & = q + (2+ \epsilon)z_1 - y_1 > z_1 + (z_1-y_1) + \epsilon z_1 + q  > z_1.
\end{align*}
The induction follows the same steps:
\begin{align*}
    z_{n+1} & = r + (2+\epsilon)y_{n+1} - z_n = y_{n+1} + (y_{n+1} - z_n)  + y_{n+1}\epsilon + r \\
        & >  y_{n+1} + y \epsilon+r \\
        & > y_{n+1}; \\
    y_{n+2} & = q + (2+ \epsilon)z_{n+1} - y_n > z_{n+1} + (z_{n+1}-y_{n+1}) + z_{n+1} \epsilon + q \\
        & > z_{n+1}.
\end{align*}
A similar treatment can be applied to $n < 0$, in which case we have $z_{n-1} > y_{n} > z_{n} > y$.

This proves that all neighbors of $X_0$ have value greater than $y$. As we can choose $y$ to be greater than $2+\alpha$, and the set $\Omega_\phi(2+\alpha)$ is connected, it is clear that $X_0$ is the only region in $\Omega_\phi(2+\alpha)$.

$\bullet$ Now assume one of $q$ or $r$ is zero, for example $r = 0$.  We can choose $y$ large enough so that $  \epsilon = \epsilon(y)  > -\frac 34 \frac{q}{y} $ and $\epsilon < \frac 14 $. In this case we can prove that $ \forall n \geq  2$ we have:
$$y_{n+1} > z_n > y_n - \frac 34 q \mbox{     and     } y_n \geq y.$$

Indeed for the first terms of the sequence, we have:
\begin{align*}
    y_1 & =  q + (2+\epsilon)y - y = y + (q+ \epsilon y) > y+\frac q4;    \\
    z_1 & =  (2+\epsilon)y_1 - y \\
        &  >  y_1 + (y_1 - y) + y_1 \epsilon > y_1 +  \frac q4 -   \frac{3q}{4}  + \epsilon \frac q4 \\
        & > y_1 - \frac q2; \\
    y_2 & =  q + (2+ \epsilon)z_1 - y_1 > z_1 + (z_1-y_1) + \epsilon (y) z_1 > z_1; \\
    z_2 & = (2+ \epsilon) y_2 - z_1 = y_2 + (y_2 - z_1) +  \epsilon y_2 > y_2 - \frac{3q}{4}; \\
    y_3 & = q + (2+\epsilon) z_2 - y_2 > z_2 -\frac q2.
\end{align*}
The induction follows the same steps.
\begin{align*}
    z_{n+1} & =  (2+\epsilon)y_{n+1} - z_n = y_{n+1} + (y_{n+1} - z_n)  + y_{n+1}\epsilon \\
        & >  y_{n+1} + y \epsilon> y_{n+1} - \frac{3q}{4}; \\
    y_{n+2} & = q + (2+ \epsilon)z_{n+1} - y_n > z_{n+1} + (z_{n+1}-y_{n+1}) + z_{n+1} \epsilon + q \\
        & > z_{n+1} - \frac{3q}{4} - \frac{3q}{4} + q \\
        & > z_{n+1}.
\end{align*}
Similarly for $n\leq -2$, we have the same results. In conclusion in all cases, the set $\Omega_\phi (2+\alpha)$ is finite.
\end{proof}

To conclude, it remains to show that the Markoff map $\phi$ constructed in the previous lemma satisfies the BQ-conditions. First, note that $\Omega_\phi (2)$ is empty, as we constructed an attracting subtree such that the value of all regions adjacent to this tree are greater than 2.  Now, using the proof of Theorem \ref{thm:B2}, we see that, if  $\Omega_\phi (2+ \alpha)$ is finite and $\phi^{-1} ( [-2 , 2]  \cup \mathcal{S}_{\Um} ) = \emptyset$, then $\log^+ |\phi|$ has Fibonacci growth, and hence $\Omega_\phi (L)$ is finite. In addition, since in the construction of $\phi$ we can choose any $y>0$ large enough as a starting point, and since $\mathcal{S}_{\Um}$ is finite,  we can see that we can choose $\phi$ such that $\phi^{-1} ( \mathcal{S}_{\Um} ) = \emptyset$. 

We can now state the theorem.

\begin{Theorem}\label{thm:realnonemptydomain}
Let $\Um = (p,q,r,s)$ and suppose $(p,q,r) \neq (0,0,0)$. Then the
set $({\bf \Phi}_{\Um})_{Q}^{\R} $ is non-empty.
\end{Theorem}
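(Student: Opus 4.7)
The plan is to take the real $\Um$-Markoff map $\phi$ supplied by the previous lemma and verify directly that it satisfies both BQ-conditions, after fine-tuning the free parameter $y$ to avoid a finite exceptional set. Recall from that lemma that $\phi$ arises from a Markoff triple of the form $(-(2+\epsilon), y, y)$ with $y>0$ as large as we wish and $\epsilon = \epsilon(y)>0$ small, with the property that the root region $X_0$ is the unique element of $\Omega_\phi(2+\alpha)$, and in fact each neighbor of $X_0$ has $|\phi|$-value at least $y$.

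For condition (BQ1), I would argue as follows: since the attracting subtree $T(0)$ reduces to $X_0$ by construction, all arrows outside $X_0$ point towards $X_0$ by Lemma \ref{lem:subtree}, and the multiplicative edge estimates of Lemma \ref{lem:B3.5} then propagate the bound $|\phi| > y$ from the neighbors of $X_0$ to every region outside $X_0$. Since in addition $|\phi(X_0)| = 2+\epsilon > 2$, no region has $|\phi|$-value at most $2$, and in particular $\phi^{-1}([-2,2]) = \emptyset$.

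For condition (BQ2), the strategy is to apply Lemma \ref{lem:B3.8} to each of the finitely many edges bounding $X_0$: this gives a lower Fibonacci bound for $\log^+|\phi|$ on each of the half-trees hanging off $\partial X_0$, and hence a global lower Fibonacci bound after combining with the upper bound of Lemma \ref{lem:upper_bound}. Proposition \ref{prop:B2.1.4} then forces $\Omega_\phi(L)$ to be finite. The hypothesis of Lemma \ref{lem:B3.8} that needs checking is $x_0 = -(2+\epsilon) \notin [-2,2]$, which is immediate, plus the implicit requirement coming from Lemma \ref{lem:neighbors}(3) that $x_0 \notin \mathcal{S}_\Um$ so that the sequences $(y_n), (z_n)$ around $X_0$ genuinely grow exponentially rather than decaying to the center of a degenerate conic.

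Handling this last genericity is where the freedom in the construction is used: $\mathcal{S}_\Um$ is a finite set, while $y \mapsto -(2+\epsilon(y))$ is a non-constant continuous function on a half-line $y \gg 0$ (its asymptotic behaviour $\epsilon(y) \sim -(q+r)/y$ being explicit from the earlier expansion), so we can pick $y$ large enough avoiding the finitely many bad values for which $x_0 \in \mathcal{S}_\Um$. The main obstacle I anticipate is really the BQ1 step rather than BQ2: one must verify that the attracting-tree machinery does propagate the bound $|\phi|>2$ from the immediate neighbors of $X_0$ to \emph{all} regions, because a single stray region with $\phi$-value in $[-2,2]$ arbitrarily deep in the tree would destroy BQ1 even while $\Omega_\phi(2+\alpha)$ remains finite. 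Once that propagation is established via Lemma \ref{lem:B3.5}(ii), the rest is essentially bookkeeping.
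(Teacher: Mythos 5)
Your proposal follows essentially the same route as the paper's own proof: take the map constructed in the preceding lemma, obtain (BQ1) by propagating the bounds outward from the attracting region $X_0$, obtain (BQ2) from the Fibonacci-growth machinery of Theorem \ref{thm:B2} via Lemma \ref{lem:B3.8}, and use the freedom in the parameter $y$ to keep $\phi$ away from the finite set $\mathcal{S}_{\Um}$. The only slip is the phrase ``finitely many edges bounding $X_0$'': the boundary $\partial X_0$ is a bi-infinite path with infinitely many edges, and Lemma \ref{lem:B3.8} is instead applied to a single directed edge in each of the two directions along $\partial X_0$, each application already covering an entire half-tree and hence all the subtrees hanging off that side of $\partial X_0$ --- exactly how the case $|\Omega_\phi(2+\alpha)|\le 1$ is handled in the proof of Theorem \ref{thm:B2}.
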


\subsection{Domain of discontinuity}

The result of the previous section implies the following result on the action of
the mapping class group on the relative character variety.

\begin{Corollary} \label{real_domain}
Let $\Ut = (a,b,c,d) \in \R^4$. The action of the mapping class
group is ergodic on the whole real slice of the relative character
variety $\mathcal{X}_{\Ut}^\R $ if and only if $|a| = |b| = |c| =
|d|$ with $abcd \leq 0$ and $ 2 < |a| < \sqrt{2(1+\sqrt{5})}$ or
$a=0$.

Otherwise, there exists an open domain of discontinuity.
\end{Corollary}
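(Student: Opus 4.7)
The plan is to combine Theorem \ref{thm:realnonemptydomain} (which handles the case $(p,q,r) \neq (0,0,0)$) with a reduction to Tan--Wong--Zhang's $s$-Markoff setting in the degenerate case $(p,q,r)=(0,0,0)$.

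The key observation is that ergodicity of the $\MCG$-action on $\mathcal{X}_\Ut^\R$ is incompatible with the existence of a non-empty open $\MCG$-invariant subset on which $\MCG$ acts properly discontinuously, since such a subset and its complement would provide an $\MCG$-invariant partition into positive-measure sets. Hence, to establish ``otherwise, a domain of discontinuity exists'' it suffices to produce a real Markoff map satisfying the BQ-conditions; Theorems \ref{thm:openproperty} and \ref{thm:properlydiscontinuously} then supply the open set of discontinuity automatically.

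For $(p,q,r) \neq (0,0,0)$ I directly invoke Theorem \ref{thm:realnonemptydomain}. Now suppose $(p,q,r) = (0,0,0)$. Adding and subtracting the three defining equations gives identities such as $(a\pm c)(b\pm d) = 0$, and a short case analysis shows that $\Ut$ falls into exactly one of three families: (i) $a=b=c=d=0$; (ii) exactly one of $a,b,c,d$ is nonzero; (iii) all four are nonzero with $|a|=|b|=|c|=|d|$ and $abcd<0$. In each case $\Um = (0,0,0,s)$, and under the substitution $(x,y,z) \mapsto (-x,-y,-z)$ the relative character variety and $\MCG$-action identify with Tan--Wong--Zhang's $s$-Markoff setup, i.e.\ with the relative character variety of the once-punctured torus with commutator trace $\kappa = s-2$.

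Computing $s$ in each family yields $s=4$ in (i), $s=4-|a|^2<4$ in (ii), and $s=(|a|^2-2)^2$ in (iii); a direct check shows that the stated condition ($a=0$ or $2<|a|<\sqrt{2(1+\sqrt{5})}$) with $|a|=|b|=|c|=|d|$ and $abcd\le 0$ picks out precisely $s \in \{4\}\cup(4,20)$. For $\Ut$ satisfying $(p,q,r)=(0,0,0)$ but lying outside this range --- namely all of (ii) and the values of (iii) with $|a|\le 2$ or $|a|\ge \sqrt{2(1+\sqrt{5})}$ --- a real BQ-representation can be constructed by adapting the argument of the previous section to the symmetric situation (equivalently by invoking Tan--Wong--Zhang's construction in the $s$-Markoff variety), yielding the desired open domain of discontinuity. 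For $s\in\{4\}\cup(4,20)$, ergodicity of the $\MCG$-action on the whole real slice will follow from the classical ergodicity theorems (Goldman, together with subsequent refinements by Tan--Wong--Zhang and Palesi) for the $\MCG$-action on the one-holed torus character variety. The main obstacle will be the sharp determination of the endpoints $|a|=2$ and $|a|=\sqrt{2(1+\sqrt{5})}$ (giving $s=4$ and $s=20$ in case (iii)) as boundary values of the ergodic range: exhibiting a real BQ-map precisely at these endpoints requires a delicate analysis of the neighbor sequences around a region, since the relevant Markoff triples lie in the degenerate set $\mathcal{S}_\Um$ of Lemma \ref{lem:neighbors}.
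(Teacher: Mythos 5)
Your strategy coincides with the paper's: Theorem \ref{thm:realnonemptydomain} disposes of $(p,q,r)\neq(0,0,0)$, and when $(p,q,r)=(0,0,0)$ the problem is transported, via $(x,y,z)\mapsto(-x,-y,-z)$, to the $s$--Markoff setting of Tan--Wong--Zhang, where Goldman's classification of the $\MCG$--dynamics on real one-holed-torus characters applies. Your direct factorisation argument for the fibre of $\mathrm{GT}$ over $(0,0,0,s)$ and your computation of $s$ in each family match what the paper does by citing Goldman--Toledo.

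The genuine gap is your treatment of the endpoint values $|a|=2$ and $|a|=\sqrt{2(1+\sqrt{5})}$ in your family (iii), i.e.\ $s=4$ and $s=20$. You place these on the ``domain of discontinuity'' side of the dichotomy and defer their proof to a ``delicate analysis'' that would produce a real BQ-map at a parameter lying in $\mathcal{S}_{\Um}$. No such analysis can succeed: by Goldman's classification, exactly as quoted in the paper, the action is ergodic on the whole (unique) component of the real character variety for every $s$ in the \emph{closed} interval $[4,20]$, so at $s=4$ and $s=20$ there is no non-empty open invariant set on which $\MCG$ acts properly discontinuously, and hence no real Bowditch map to be found. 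The paper's own proof in fact derives the non-strict condition $2\le|a|\le\sqrt{2(1+\sqrt{5})}$ or $a=0$ from $4\le s\le 20$; the strict inequalities in the statement of the corollary are inconsistent with its proof, and the correct resolution is to put the endpoints into the ergodic case, not to hunt for a BQ-map there. A secondary, minor point: your argument that ergodicity is incompatible with a non-empty open properly discontinuous invariant set is not quite right as stated, since the complement of such a set could a priori be null; the standard fix is to use a measurable fundamental domain for the properly discontinuous action of the infinite group $\MCG$ on the open set to manufacture invariant sets of intermediate measure.
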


\begin{proof}
In the case $(p,q,r)= (0,0,0)$, the set of $(0,0,0,s)$--Markoff maps
is identified with the set of $s$--Markoff maps in the sense of
Tan-Wong-Zhang. These maps correspond to real character of the
one-holed torus. The action of the mapping class group on the real
character is completely described by Goldman, and can be stated as
follows:
\begin{itemize}
    \item If $s<0$, there are four contractible connected components in the character variety and the action is properly discontinuous on each one.
    \item If  $s\in [0 , 4[$, there are four contractible connected components on which the action is properly discontinuous, and one compact component where the action is ergodic.
    \item If $s \in [4 , 20]$ there is a unique connected component and the action of the mapping class group is ergodic.
    \item If $s>20$, there is a unique connected component. There is an open domain $\mathcal{D}$ which is a domain of discontinuity for the action, and on the complementary of $\mathcal{D}$ the action is ergodic.
\end{itemize}
So the only case where the action is ergodic on the whole character variety is when $s \in [4 , 20]$.

From Goldman-Toledo \cite{gol_aff}, we have  $(p,q,r)=(0,0,0)$ if and only if one of the following is satisfied:
\begin{enumerate}
    \item $|a| = |b| = |c| = |d|$ with $abcd \leq 0$
    \item Three of $a,b,c,d$ are zero.
\end{enumerate}
In the first case $s = 4-a^2-b^2-c^2-d^2-abcd = 4 - 4a^2+a^4$. Solving the system $4\leq 4 - 4a^2 + a^4 \leq 20$ gives
$$2 \leq |a| \leq \sqrt{2(1+\sqrt{5})} \mbox{  or   } a=0$$
In the second case, we can assume that $b=c=d=0$. Then we have that $s = 4 - a^2\leq 4$. Hence $s\in [4 , 20]$ implies that $a=0$ and hence we have $a=b=c=d=0$ which is included in the first case.

Otherwise, when $(p,q,r)$ is non zero, there is a non-empty open
domain of discontinuity by Theorem \ref{thm:realnonemptydomain} and
hence the action is not ergodic.
\end{proof}

\section{Concluding remarks}\label{s:Conclusion}

This paper was mostly concerned with describing a domain of discontinuity for the action of $\MCG(S)$ on the (relative) character variety of the four-holed sphere $S$. We expect that it would be possible to implement this on a program to draw various slices of the domain of discontinuity, and we expect that, in general, these slices would have highly fractal boundary, as in the case of the character variety for the free group $F_2$ on two generators analysed by Series, Tan and Yamashita \cite{ser_xxx}. Restriction to the real case may also be of interest. It is not clear at this point if the domains of discontinuity for the real case would have relatively smooth boundary as in the case analysed by Goldman in \cite{gol_the} for $F_2$, or if they would exhibit fractal type boundary.

There are also other group actions on the character variety which are of interest. In particular, we can consider the action of $Out(F_3)$ as studied by Minsky \cite{min_ond} (this is a bigger group action than what we consider). In this case, it is somewhat difficult to give easily (computer) verifiable conditions to describe the domain of discontinuity like the BQ-conditions, nonetheless, our methods may provide a starting point towards this end.

Finally we note that, as pointed out in the beginning of the
Introduction, much of the original motivation for the paper came
from McShane's identity and the proof provided by Bowditch. A new
generalization to the identity from the point of view of Coxeter
group actions on quartic varieties was given by Hu, Tan and Zhang in
\cite{hut_cox}, and we plan to pursue this direction in a future
paper and explore possible generalizations of the McShane's identity
in our context.

\bibliographystyle{plain}

\bibliography{sample2}

\end{document}